\def\expec{\mathbb{E}}
\newtheorem {lemma}{Lemma}
\newtheorem {proposition}{Proposition}
\newtheorem {theorem}{Theorem}
\newtheorem {definition}{Definition}
\newtheorem {corollary}{Corollary}
\newtheorem {remark}{Remark}
\def \Z {{\Bbb{Z}}}
\def \C {{\Bbb{C}}}
\def \N {{\Bbb{N}}}
\def \R {{\Bbb {R}}}
\def \dist {\hbox{dist}}
\def\bra{\langle}
\def\cet{\rangle}
\def \bv {{\bf {v}}}
\def \bV {{\bf {V}}}
\def \bu {{\bf {u}}}
\def \bU {{\bf {U}}}
\def \bm {{\bf {m}}}
\def \bw {{\bf {w}}}
\def \onef{{\bf 1}}
\newcommand{\norm}[1]{\left\|#1 \right\|}
\newcommand{\trace}{\ensuremath{{{\rm Tr}}}}
\renewcommand{\l}{\left}
\renewcommand{\r}{\right}
\newcommand{\abs}[1]{\left| #1 \right|}
\newcommand{\vek}[2]{{ #1 \choose #2}}
\newcommand{\ep}{\epsilon}
\newcommand{\T}{\mathbb{T}}
\newcommand{\ran}{{\rm Ran}}
\newcommand{\Pro}{\mathbb{P}}
\newcommand{\Ec}{{\mathcal E}}
\newcommand{\Ll}{{\mathcal L}}
\newcommand{\apr}[1]{\langle #1 \rangle}
\newcommand{\td}{D_q}
\title[On infinite-dimensional hierarchical probability models]{On infinite-dimensional hierarchical probability models in statistical inverse problems}
\author[Tapio Helin]{}
\begin{document}

\maketitle

 \centerline{\scshape Tapio Helin}
 \medskip
 {\footnotesize
   \centerline{Department of Mathematics and System Analysis}
   \centerline{Helsinki University of Technology}
   \centerline{P.O. Box 1100 (Otakaari 1 M),
 FI-02015 TKK, Finland} }
 \medskip

\begin{abstract}
In this article, the solution of a statistical inverse problem $M = AU+\Ec$ by the Bayesian approach is studied where
$U$ is a function on the unit circle $\T$, i.e., a periodic signal. The mapping $A$ is a smoothing linear operator
and $\Ec$ a Gaussian noise. The connection to the solution of  
a finite-dimensional computational model $M_{kn} = A_k U_n + \Ec_k$ is discussed.
Furthermore, a novel hierarchical prior model for obtaining edge-preserving conditional mean estimates is introduced.
The convergence of the method with respect to finer discretization is studied and
the posterior distribution is shown to converge weakly.
Finally, theoretical findings are illustrated by a numerical example with simulated data.
\end{abstract}


\section{Introduction}

Reconstruction methods with edge-preserving or -enhancing properties
are widely studied topic in deterministic inverse problems.
There exists a variety of different sophisticated approaches in the literature including
functional regularization (e.g., the total variation approach \cite{ROF}) 
or geometrical methods (e.g., the level set methods \cite{Se}).
In the Bayesian inversion theory some methods 
have been introduced aiming for an edge-preserving point estimate
in the finite-dimensional setting \cite{GG,BoSa,CGL,Ash}.
Especially the work by 
Calvetti and Somersalo \cite{CS,CS2} with hierarchical priors is closely related to this paper.
In general it seems to be difficult to establish how the posterior distribution behaves 
asymptotically, i.e., as discretization of the problem gets finer.
This is due to the fact that such methods usually require non-Gaussian prior modeling and the related infinite-dimensional 
Bayesian theory is not fully developed. 
This paper introduces a novel hierarchical structure
leading to non-Gaussian prior modeling for signal segmenting problems. 
We show that the limiting behavior of our model can be analyzed. 

Let us discuss the current perspectives in Bayesian modeling.
Consider a linear inverse problem
\begin{equation}
  \label{idealmodel}
  M = A U + \Ec
\end{equation}
where $U$ is the object of interest, $\Ec$ a noise and $M$ the measured data on some function spaces. 
In the Bayesian inversion these quantities are modeled as random variables
and their probability distributions depict all information available prior to the
measurement. With this information the goal is to make statistical inference on $U$ given the
model equation \eqref{idealmodel} and a realization $M(\omega_0)$ of $M$. 
Sometimes the prior distribution of the object of interest $U$ depends on an unknown parameter which then
becomes part of the modeling and inference problem. Such prior structures are often referred to as {\em hierarchical models}.

In practice the measurement is 
often produced by some finite-dimensional projection $M_k=P_k M$.
Furthermore, one also has to discretize $U$ for computational purposes. 
This yields the {\em computational model}
\begin{equation}
  \label{compmodel}
  M_{kn} = P_k(A U_n + \Ec) = A_k U_n + \Ec_k.
\end{equation}
Notice the two independent discretization levels $n$ and $k$.
Solving the inverse problem with the Bayesian approach requires two steps: first, 
one translates all a priori information into the probability
distributions of $U_n$ and the noise $\Ec_{k}$.
The {\em posterior probability} ${\mathcal P}_{kn}(\cdot\;|\; m)$, i.e., the probability distribution of $U_n$ 
conditioned on the measurement $m=M_k(\omega_0)$, is then obtained by using the Bayes formula and equation \eqref{compmodel}.

Usually the ultimate goal is to compute some information, e.g., point or spread estimates, 
from the posterior distribution. A point estimate that we discuss frequently in this paper is
the {\em conditional mean} (CM) estimate which can be written for equation \eqref{compmodel} in 
Euclidian spaces $\R^n$ and $\R^k$ as
\begin{equation}
  \label{cm_finite}
  u^{CM}_{kn} = \int_{\R^n} u\; d{\mathcal  P}_{kn}(u\;|\; m).
\end{equation}
Now a natural question follows: what happens to the reconstructed information 
if $U_n$ or $\Ec_k$ is modeled on finer
discretization, i.e., with a bigger $n$ or $k$? 
Moreover, do the posterior probability distributions converge and 
how to guarantee that 
the reconstructed objects stay stable (e.g., CM estimate converges) as $n$ and $k$ increase?

The interplay between solutions of problems \eqref{idealmodel} and \eqref{compmodel}
in general situations is not fully understood. However, some partial results exist.
In fact, if $U_n$ and $\Ec_k$ are obtained by projections from Gaussian distributions 
the convergence of posterior distribution has been proved in very general setting by Lasanen in \cite{La}.
To the author's knowledge only convergence studies with non-Gaussian posterior distribution
have been done from this point of view recently in \cite{Pi} and \cite{LSS}.
These first positive results show some general conditions for obtaining weakly converging 
posterior distributions and in addition converging CM estimates. We emphasize that
these results require Gaussian noise distribution.

Yet another non-trivial question is how to make sure that the crucial statistical 
properties of posterior distribution are not lost asymptotically?
This is highly relevant to the edge-preserveness discussed above.
Namely, in \cite{LS} it was shown that the usual modeling of TV prior carries an unpleasant defect such that the
edge-preserving property is lost from the CM reconstructions as dimensionality of the problem increases.
The reason behind this is that under different parameterization the prior distribution either converges to
a Gaussian smoothness prior or diverges. 
In \cite{LSS} a non-Gaussian prior structure is proposed
for edge-preserving CM estimates. The estimates $u^{CM}_{kn}$ are shown to converge to
so-called {\em reconstructors} that generalize the concept of CM estimates in infinite-dimensional spaces.
We discuss this in more details later. 
The work by Piiroinen in \cite{Pi} contains results about 
the existence of a discretization leading to converging posterior information in general non-Gaussian setting.

Let us now review other related literature on the topic. First results on the Bayesian inversion
in infinite-dimensional function spaces were introduced in \cite{Fr} by Franklin. This 
research has then been continued and generalized by Mandelbaum \cite{Ma},
Lehtinen, P\"aiv\"arinta and Somersalo \cite{LPS}, Fitzpatrick \cite{Fi}, and Luschgy \cite{Fi}.
Lastly, we want to stress that the convergence of posterior distributions has also been studied from different perspectives.
Namely, in \cite{HP,HP2,NP} such convergence is studied when the
objective information becomes more accurate. Also, model reduction problems
are considered in \cite{kaipiosomersalo}.
For a general presentation on the Bayesian inverse problems theory and computation see \cite{kaipiosomersalo2} and \cite{CS3}. The topic of probability theory in Banach spaces is covered in \cite{VTI}.

This paper studies the problem of edge-preserving reconstructions in
signal restoration problems with the emphasis on how to locate discontinuities. 
For technical reasons we concentrate on periodic signals, i.e., the domain for our study is a 1-dimensional sphere $\T$.
We model our prior beliefs of the unknown signal $u$ with a hierarchical structure $(U,V)$ where the auxiliary random 
variable $V$ models how the discontinuities are distributed. The conditional distribution of $U$ given a sample of $V$
then models our prior information about $u$ if we know where the discontinuities are located.
Such Bayesian modeling has close connection to previous hierarchical segmentation methods \cite{GG,CS,CS2}.
The method draws also a lot of inspiration from the celebrated Mumford--Shah image segmentation method
\cite{mumfordshah} and its variational approximation introduced by Ambrosio and Tortorelli \cite{AT1,AT2}.

In this paper we introduce a finite-dimensional
prior structure $(U_n,V_n)$ that produces a weakly converging posterior structure
in the presence of a Gaussian noise. The main 
theoretical results concerning the prior can be divided into three parts:
\begin{itemize}
\item[(i)] There exists a well-defined random variable $(U,V):\Omega\to L^2(\T)\times L^2(\T)$
to which $(U_n,V_n)$ converges in distribution.
\item[(ii)] The posterior distributions ${\mathcal P}_{kn}$ converge weakly in $L^2(\T)\times L^2(\T)$
assuming that the measurements converge.
\item[(iii)] The CM estimate $(u_{kn}^{CM},v_{kn}^{CM})$ converges to reconstructors of problem \eqref{idealmodel}.
\end{itemize}
In addition we improve the results in \cite{LSS} concerning the general theory.
We implement our method in practice and include some numerical examples
with computer generated data.
The connection of maximum a posteriori (MAP) estimates to Ambrosio-Tortorelli minimizers 
that was presented in \cite{HeLa} is not studied here.

This paper is organized as follows. In Section 2 we introduce relevant concepts and main results concerning the general theory.
The infinite-dimensional hierarchical prior model $(U,V)$ in $L^2(\T)\times L^2(\T)$ is defined in Section 3. We carefully show that such a construction is well-defined.
Discretized prior distributions for $(U_n,V_n)$ are constructed in Section 4. It is important
to note that we can explicitly write down the related density functions. This becomes highly valuable
in numerical implementation as no more approximations need to be made.
Section 5 is divided into three parts. First the theorems of Section 2 are proved.
Secondly, we show here that $(U_n,V_n)$ converges to $(U,V)$ in distribution on $L^2(\T)\times L^2(\T)$.
We conclude Section 5 by showing the important property of uniformly finite exponential moments for
the introduced prior structure.
Finally in Section 6 we illustrate with numerical examples how our method works in practice.


\section{General setting}

Next we define problem \eqref{idealmodel} rigorously. In order to do so let us introduce some notations.
Below $\bra\cdot,\cdot\cet$ refers to pairing of generalized functions with test functions.
In real Banach space $B$ the dual pairing is denoted by $\bra\cdot, \cdot\cet_{B'\times B}$.
In a real Hilbert space $H$ we denote the inner product by $\bra\cdot, \cdot\cet_H$.
We denote the Borel sets in $B$ by ${\mathcal B}(B)$.
Throughout this paper whenever not explicitly mentioned we assume 
the measurable structure of Borel sets.
The notation $\Ll(B_1,B_2)$ stands for the space of bounded linear operators between Banach spaces
$B_1$ and $B_2$, and $\Ll(B,B)$ is abbreviated as $\Ll(B)$. If the operator $T:B_1\to B_2$ is a bounded linear
operator, we denote the adjoint operator by $T':B_2'\to B_1'$.
Recall also that a bounded linear operator $T$ in a Hilbert space $H$ is said to be in the trace class if
\begin{equation*}
  \trace_H (T) := \sum_{j=1}^\infty \bra T e_j, e_j\cet_H < \infty
\end{equation*}
for some orthonormal basis $\{e_j\}_{j=1}^\infty \subset H$. 
We want to point out that the definition is independent of the
choice of the basis.
Throughout the paper if not explicitly mentioned $C$ denotes
a positive constant.
For two functions $f,g : X \to \R \cup \{\infty\}$ we also write
$f\preceq g$ if
there exists a constant $C>0$ such that $f\leq Cg$ as functions. 
Finally, for any $s\in\R$, let $H^s(\T)$ be the
$L^2$-based Sobolev space \cite{Adams} equipped with Hilbert space inner product
\begin{equation*}
  \bra \phi,\psi \cet_{H^s} = \int_{\T} ((I-\Delta)^{s/2}\phi)(x) ((I-\Delta)^{s/2}\psi)(x) dx
\end{equation*}
for any $\phi,\psi \in H^s(\T)$.

Let us return to considering problem \eqref{idealmodel}. Let $(\Omega,\Sigma,\Pro)$ be a complete probability space
with a product structure $\Omega = \Omega_{pr} \times \Omega_{er}$, $\Sigma = \overline{\Sigma_{pr}\otimes \Sigma_{er}}$
and $\Pro = \Pro_{pr}\otimes \Pro_{er}$. Throughout this section $H$ will be fixed to denote a real separable Hilbert space.
We assume the following conditions:
\begin{itemize}
\item[(i)] The mapping $U :\Omega_{pr}\to H$ is a random variable.
\item[(ii)] The mapping $A: H \to H^1(\T)$ is a bounded linear operator.
\item[(iii)] The random variable $\Ec:\Omega_{er}\to H^{-1}(\T)$ is Gaussian with expectation $\expec \Ec = 0$
and a covariance operator $C_\Ec : H^{-1}(\T) \to H^{-1}(\T)$.
\item[(iv)] The range of $C_\Ec$ is dense in $H^{-1}(\T)$.
\end{itemize}
The conditions (iii) and (iv) imply that $C_\Ec$ is one-to-one, self-adjoint
and in the trace class and that 
we have a unique positive and self-adjoint power $C_\Ec^t$ for any $t\in\R$.
Later in numerical examples $\Ec$ has a
covariance operator $C_\Ec = (I-\Delta)^{-1}:H^{-1}(\T)\to H^1(\T)$. Such a random variable
is white noise in the sense of generalized random variables \cite{La}.
\begin{definition}
  Let $\mu$ be a centered Gaussian measure on $(H,\mathcal{B}(H))$ and its covariance operator
  $C : H\to H$ such that ${\rm Ran}(C)$ is dense in $H$. We call the
  real separable Hilbert space
  \begin{equation*}
    H(\mu) = \left\{ f \in H \; | \; \norm{C_X^{-1/2} f}_{H} < \infty\right\}
  \end{equation*}
  equipped with inner product
  \begin{equation*}
    \bra f,g\cet_{H(\mu)} = \bra C_X^{-1/2} f, C_X^{-1/2} g \cet_H
  \end{equation*}
  for any $f,g\in H(\mu)$ 
  the {\em Cameron-Martin space} (or the {\em reproducing kernel Hilbert space}) of $\mu$.
\end{definition}
This definition can be seen to coincide with the usual definition of Cameron-Martin spaces
by Proposition 2.9 in \cite{DaPrato}.
The Cameron-Martin space structure is used later in Section 4.
For an extensive presentation on the topic in locally
convex spaces see \cite{B}.

If $U\in L^1(\Omega,\Sigma; H)$ and $\Sigma_0$ is
a sub $\sigma$-algebra of $\Sigma$, we denote the conditional expectation 
of $U$ with respect to $\sigma$-algebra $\Sigma_0$ by
$\expec (U | \Sigma_0)$. That is, $\expec (U | \Sigma_0) \in L^1(\Omega, \Sigma_0 ; H)$ and it satisfies
\begin{equation}
  \int_D \expec(U | \Sigma_0)(\omega) \Pro (d\omega)
  = \int_D U(\omega) \Pro(d\omega)
  \quad \textrm{for all } D\in \Sigma_0.
\end{equation}

All vector-valued integrals in this work are standard Bochner integrals. 
For more information on Bochner integrals see \cite{DU}.
The operator $P_{\Sigma_0}: U \mapsto \expec (U | \Sigma_0)$ is
a projection $P_{\Sigma_0} : L^1(\Omega, \Sigma; H) \to L^1(\Omega, \Sigma_0 ; H)$, where
$L^1(\Omega,\Sigma_0; H)$ denotes the space of measurable functions from $(\Omega,\Sigma_0)$
to $(H,{\mathcal B}(H))$ which are Bochner integrable.

\begin{definition}
Denote by ${\mathcal M} \subset \Sigma$ the 
$\sigma$-algebra generated by the random variable $M$.
We say that any deterministic function
\begin{equation}
  {\mathcal R}_M(U | \cdot ) : H^{-1}(\T)\to H, \quad m 
  \mapsto {\mathcal R}_M (U | m),
\end{equation}
is a {\rm reconstructor} of $U\in L^1(\Omega, \Sigma; H)$ with measurement $M$ if
\begin{equation}
  {\mathcal R}_M(U | M(\omega)) = \expec(U | {\mathcal M})(\omega) \quad \textrm{almost surely}.
\end{equation}
If $\widetilde{H}$ is a real separable Hilbert space, 
$g: (H,{\mathcal B}(H)) \to (\widetilde{H},\mathcal{B}(\widetilde{H}))$ 
is a measurable function and $g(U)\in L^1(\Omega,\Sigma ; \widetilde{H})$, we define
${\mathcal R}_M (g(U) | \cdot) : H^{-1}(\T) \to \widetilde{H}$ to be any deterministic function satisfying
\begin{equation}
  {\mathcal R}_M(g(U) | M(\omega)) = \expec(g(U) | {\mathcal M})(\omega) \quad \textrm{almost surely}.
\end{equation}
\end{definition}

We refer to \cite{LSS} for the existence of ${\mathcal R}_M$. Note that although
${\mathcal R}_M$ is not necessarily unique it was shown in \cite{LSS} that
in the presence of Gaussian noise the following choice can be made:
Assume that the prior distribution $\lambda$ of $U$ has finite exponential moments, i.e.,
\begin{equation*}
  \int_H \exp(c\norm{u}_H) d\lambda(u) < \infty
\end{equation*}
for any $c\in\R$,  and
assume $\widetilde H$ is a real separable Hilbert space. 
Furthermore, let $g: (H,\mathcal{B}(H)) \to (\widetilde H,\mathcal{B}(\widetilde H))$ be a measurable function 
satisfying $\expec \norm{g(U)}_{\widetilde H}<\infty$. Then
a function ${\mathcal R}_M(U | \cdot ) : H^{-1}(\T)\to H$ defined by formula
\begin{equation}
  \label{recon_formula}
  {\mathcal R}_M (g(U)\; | \;m) =
  \frac{\int_H g(u) \Xi(u,m) d\lambda(u)}{\int_H \Xi(u,m) d\lambda(u)}
\end{equation}
is a reconstructor, where $\Xi: H\times H^{-1}(\T) \to\R$ is the function
\begin{equation*}
  \Xi(u,m) = \exp(-\frac 12 \norm{Au}^2_{L^2}+ \bra C_\Ec^{-1} Au, m\cet_{H^{-1}}).
\end{equation*}
Throughout this paper we make the above choice of reconstructors.

As was discussed earlier
the measurement is never infinite-dimensional in practice. Let us next explain how we assume
the measurement to be obtained.
\begin{definition}
  \label{propermeasurement}
  The finite-dimensional linear projections $P_k : H^{-1}(\T)\to H^{-1}(\T)$, $k\in\N$, are called
  {\em proper measurement projections} when they satisfy the
  following conditions:
  \begin{itemize}
    \item[(i)] We have $\ran (P_k)\subset H^1(\T)$ and
      $\norm{P_k}_{\Ll(H^1)} \leq C_0$ for some
      constant $C_0$ with all $k\in\N$.
    \item[(ii)] For $t\in \{-1,1\}$ we have
    \begin{equation*}
      \lim_{k\to\infty} \norm{P_k f-f}_{H^t} = 0
    \end{equation*}
    for all $f\in H^t(\T)$.
    \item[(iii)] For all $\phi,\psi\in L^2(\T)$ it holds that
    \begin{equation*}
      \bra P_k \phi, \psi\cet_{L^2} = \bra \phi,P_k \psi\cet_{L^2}.
    \end{equation*}
  \end{itemize}
\end{definition}
The conditions in Definition \ref{propermeasurement} are same as
in \cite[Thm. 3]{LSS} and are motivated there. We note that in this paper
these assumptions are only used in the proof of Theorem \ref{LSS_result}.

In practical situation the measurement is a realization of a random variable
\begin{equation}
  \label{apprmodel}
  M_k = P_k M = A_k U + \Ec_k,
\end{equation}
where $A_k = P_kA$, $\Ec_k = P_k \Ec$. 
In order to be able to compute a numerical solution one has to discretize
also the random variable $U$ (independently of $P_k$) in $H$. 
Denote the discretization by $U_n : \Omega \to H_n\subset H$ in a finite-dimensional subspace $H_n$. Now the two
discretizations with respect to $n$ and $k$ lead to the computational
model \eqref{compmodel}. We note that the reconstructor can be defined for all above models, 
for problem \eqref{idealmodel} on $H^{-1}(\T)$ and for problems \eqref{compmodel} and \eqref{apprmodel} on ${\rm Ran} (P_k)$. 
Before next definition recall that probability 
measures $\mu_n$, $n\in\N$, {\em converge weakly} to $\mu$ in $(H,{\mathcal B}(H))$
if for every bounded and continuous function $f: H \to \R$ it holds that
\begin{equation*}
  \lim_{n\to\infty}\int_H f(u) d\mu_n(u) = \int_H f(u) d\mu(u).
\end{equation*}
In the following definition we characterize a condition that allows converging probability measures to have
only very small tails.
\begin{definition}
  \label{unifprior}
  We call measures $\mu$ and $\mu_n$, $n\in\N$, on $(H,\mathcal{B}(H))$ 
  {\em uniformly discretized with exponential weights} if
  \begin{itemize}
    \item[(i)] $\mu_n$ converges weakly to $\mu$ on $H$ and
    \item[(ii)] for every $b>0$ there exists a constant $0<C(b)<\infty$ such that
    \begin{equation*}
      \int_H \exp(b \norm{u}_H) d\mu_n(u) \leq C(b) \quad {\rm and}\quad
      \int_H \exp(b \norm{u}_H) d\mu(u) \leq C(b)
    \end{equation*}
    for every $n\in\N$.
  \end{itemize}
\end{definition}

We are now ready to formulate our main theorem regarding the general theory. We postpone the proof to Section 5.1.
\begin{theorem}
  \label{LSS_result}
  Assume the following three conditions: 
  \begin{itemize}
    \item[(i)] The operators $P_k: H^{-1}(\T)\to H^{-1}(\T)$, $k\in\N$, are proper measurement projections. 
    \item[(ii)] The probability distributions of $U_n,U : \Omega\to H$, $n\in\N$, are uniformly discretized with
    exponential weights.
     \item[(iii)] A continuous function $g: H\to \widetilde{H}$ 
       where $\widetilde{H}$ is a real separable Hilbert space, satisfies
       \begin{equation*}
         \norm{g(u)}_{\widetilde{H}} \leq C \exp(C\norm{u}_H)
       \end{equation*}
       for all $u\in H$ with some constant $C$.
  \end{itemize}
  Now let $u=U(\omega_0)$ and $\epsilon=\Ec(\omega_0)$ be realizations of the random variables $U$ and $\Ec$,
  respectively, and let
  \begin{equation*}
    m = Au+\epsilon \quad {\rm and} \quad m_k = A_k u + P_k \epsilon
  \end{equation*}
  be the realizations of the random variables $M$ and $M_k$ in equations \eqref{idealmodel} and \eqref{apprmodel},
  respectively. Then the reconstructors defined by
  formula \eqref{recon_formula} for models \eqref{idealmodel} and \eqref{apprmodel} satisfy
  \begin{equation*}
    \lim_{k,n\to\infty} {\mathcal R}_{M_{kn}} (g(U_n) \; | \; m_k) = {\mathcal R}_{M} (g(U) \; | \; m)
  \end{equation*}
  in $\widetilde{H}$.
\end{theorem}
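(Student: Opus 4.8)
\emph{Strategy.} The plan is to write the reconstructor \eqref{recon_formula} for the limit model \eqref{idealmodel} and for the projected model \eqref{apprmodel} as quotients of integrals and to pass to the limit in numerator and denominator separately. Denote by $\lambda$ and $\lambda_n$ the distributions of $U$ and $U_n$ on $H$, and let
\begin{equation*}
  \Xi_k(u,m_k) = \exp\left(-\tfrac12\norm{A_k u}^2_{L^2} + \bra C_\Ec^{-1} A_k u, m_k\cet_{H^{-1}}\right)
\end{equation*}
be the weight that formula \eqref{recon_formula} assigns to \eqref{apprmodel} (here Definition \ref{propermeasurement}(iii) is used to put the projected likelihood into this form), so that $\mathcal R_{M_{kn}}(g(U_n)\,|\,m_k)$ equals $\big(\int_H g\,\Xi_k\,d\lambda_n\big)\big/\big(\int_H \Xi_k\,d\lambda_n\big)$. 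Since $\Xi>0$ and $\lambda$ is a probability measure, the limit denominator $\int_H \Xi(\cdot,m)\,d\lambda$ is strictly positive; hence it suffices to prove
\begin{equation*}
  \int_H g(u)\Xi_k(u,m_k)\,d\lambda_n(u) \to \int_H g(u)\Xi(u,m)\,d\lambda(u) \quad\text{and}\quad \int_H \Xi_k(u,m_k)\,d\lambda_n(u)\to \int_H \Xi(u,m)\,d\lambda(u)
\end{equation*}
as $k,n\to\infty$, after which the quotients converge in $\widetilde H$. Both limits are of the same type, the first with the $\widetilde H$-valued integrand $g\,\Xi_k$ and the second with the scalar integrand $\Xi_k$, so I treat them together. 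Note also that $m_k=A_ku+P_k\epsilon=P_k m$, which will make the measurement data converge.

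\emph{Two analytic inputs.} First I establish a \emph{uniform exponential bound}. Using $\norm{P_k}_{\Ll(H^1)}\le C_0$ from Definition \ref{propermeasurement}(i) and the boundedness of $A:H\to H^1(\T)$, one has $\norm{A_k u}_{H^1}\le C\norm{u}_H$ with $C$ independent of $k$; the quadratic term in the exponent is nonpositive, while the linear term obeys $|\bra C_\Ec^{-1}A_k u, m_k\cet_{H^{-1}}|\le C\norm{u}_H\norm{m_k}_{H^{-1}}$, and $\norm{m_k}_{H^{-1}}$ is bounded because $m_k=P_k m\to m$ in $H^{-1}$ by Definition \ref{propermeasurement}(ii). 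Consequently $0<\Xi_k(u,m_k)\le \exp(C\norm{u}_H)$ with $C$ uniform in $k$, and the same bound holds for $\Xi$. Secondly I record \emph{continuous convergence}: if $u_n\to u$ in $H$ and $k\to\infty$, then $A_k u_n=P_k A u_n\to Au$ in $H^1$, since $\norm{P_k A(u_n-u)}_{H^1}\le C_0\norm{A}\norm{u_n-u}_H\to 0$ and $(P_k-I)Au\to 0$ in $H^1$ by Definition \ref{propermeasurement}(ii); together with $m_k\to m$ in $H^{-1}$ this forces $\Xi_k(u_n,m_k)\to\Xi(u,m)$. In particular $\Xi_k(\cdot,m_k)\to\Xi(\cdot,m)$ pointwise on $H$.

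\emph{Passing to the limit.} The double limit is handled by combining continuous convergence with the uniform integrability furnished by the exponential weights. Fix a continuous cutoff $\chi_R$ equal to $1$ on $\{\norm{u}_H\le R\}$ and $0$ on $\{\norm{u}_H\ge 2R\}$ and split each integrand as $\chi_R\,(g\,\Xi_k)+(1-\chi_R)\,(g\,\Xi_k)$. On the tail, hypothesis (iii) and the previous step give $\norm{g(u)}_{\widetilde H}\Xi_k(u,m_k)\le C\exp(C\norm{u}_H)$, and the Chebyshev-type estimate $\int_{\norm{u}_H>R}\exp(C\norm{u}_H)\,d\lambda_n(u)\le e^{-R}\int_H \exp((C+1)\norm{u}_H)\,d\lambda_n(u)\le e^{-R}C(C+1)$, valid by Definition \ref{unifprior}(ii) uniformly in $n$ (and likewise for $\lambda$), bounds the tail contribution by $Ce^{-R}$ uniformly in $n$ and $k$. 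The main part $\chi_R\,(g\,\Xi_k)$ is bounded and continuous and converges continuously to $\chi_R\,(g\,\Xi)$; since $\{\lambda_n\}$ is tight by Prokhorov's theorem, continuous convergence becomes uniform convergence on the relevant compacta, and combined with the weak convergence $\lambda_n\to\lambda$ of Definition \ref{unifprior}(i) this yields $\int_H \chi_R\,g\,\Xi_k\,d\lambda_n\to\int_H \chi_R\,g\,\Xi\,d\lambda$. Letting first $k,n\to\infty$ and then $R\to\infty$ gives the two displayed limits; the $\widetilde H$-valued case is reduced to the scalar one by testing against $\ell\in\widetilde H'$ and upgrading to norm convergence using the uniform tightness of the mass.

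\emph{Main obstacle.} The crux is the genuinely double-indexed passage to the limit in $\int_H g\,\Xi_k\,d\lambda_n$, where both the integrand and the measure vary: ordinary weak convergence handles only a bounded continuous \emph{fixed} integrand against a varying measure, whereas here $g\,\Xi_k$ is unbounded and $k$-dependent. The device that makes it work is the uniform-integrability (exponential-weight) version of the weak-convergence theorem, namely the combination of continuous convergence of the integrands with a uniformly integrable exponential envelope. This is exactly where hypotheses (ii) and (iii) are indispensable, and where the present argument refines the corresponding step of \cite{LSS}.
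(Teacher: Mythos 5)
Your overall architecture --- write both reconstructors as quotients of integrals via formula \eqref{recon_formula}, establish a uniform exponential envelope $0<\Xi_k(u,m_k)\leq \exp(C\norm{u}_H)$ with $C$ independent of $k$, and pass to the double limit by combining continuous convergence of the integrands with the uniform exponential moments of Definition \ref{unifprior} --- matches the paper's, and your limit passage (cutoff $\chi_R$, Chebyshev tail bound, tightness via Prokhorov, uniform convergence on compacta) is in fact spelled out in more detail than the paper's, which simply invokes the Vitali-type Lemma \ref{domconv} even though that lemma as stated treats a fixed integrand rather than the $k$-dependent family $\Xi(\cdot,m_k)$. So the analytic core of your argument is sound.

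The genuine gap is at the very first step: you assert, with only a parenthetical appeal to Definition \ref{propermeasurement}(iii), that ${\mathcal R}_{M_{kn}}(g(U_n)\,|\,m_k)$ equals $\bigl(\int_H g\,\Xi_k\,d\lambda_n\bigr)/\bigl(\int_H \Xi_k\,d\lambda_n\bigr)$ with $\Xi_k(u,m_k)=\exp(-\tfrac12\norm{A_ku}^2_{L^2}+\bra C_\Ec^{-1}A_ku,m_k\cet_{H^{-1}})$. But the model \eqref{apprmodel} has \emph{discretized} noise $\Ec_k=P_k\Ec$, living on $\ran(P_k)$ with covariance $P_kC_\Ec P_k'|_{\ran(P_k)}$; the reconstructor formula for that model involves the inverse of this restricted covariance, and it is not automatic that the resulting quadratic and linear forms coincide with $\norm{A_ku}^2_{L^2}$ and $\bra C_\Ec^{-1}A_ku,m_k\cet_{H^{-1}}$. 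Establishing this identification is precisely where all three conditions of Definition \ref{propermeasurement} do their real work, and it is the content of \cite[Lemma 1]{LSS}. The paper's proof sidesteps the issue by introducing the intermediate model $\Theta_{kn}=A_kU_n+\Ec$ with \emph{undiscretized} noise --- for which the quotient formula with $C_\Ec^{-1}$ is immediate --- proving the convergence ${\mathcal R}_{\Theta_{kn}}(g(U_n)\,|\,m_k)\to{\mathcal R}_M(g(U)\,|\,m)$ by Lemma \ref{domconv}, and only then transferring the result to ${\mathcal R}_{M_{kn}}$ via \cite[Lemma 1]{LSS}. Without that transfer step (or an explicit computation replacing it), what you have proved is the convergence of the reconstructors of the intermediate model, not of the model \eqref{apprmodel} stated in the theorem.
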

Let $E\subset H$ be a Borel set and $\onef_E$ be the indicator function of $E$. Define probability measures
\begin{eqnarray*}
  {\mathcal P}(E\; |\; m) & = & {\mathcal R}_M(\onef_E(U)\; |\; m), \\
  {\mathcal P}_{kn} (E\; |\; m_k) & = & {\mathcal R}_{M_{kn}} (\onef_E(U)\; |\; m_k)
\end{eqnarray*}
on $H$ with the same choices of reconstructor made in Theorem \ref{LSS_result}.
One notices that these measures correspond to the posterior distribution obtained from Bayes formula
in the finite-dimensional case.
An important corollary to Theorem \ref{LSS_result} is shown in \cite{LSS}. 
\begin{corollary}
  Let the assumptions in Theorem \ref{LSS_result} hold. Then the measures
  ${\mathcal P}_{kn}(\cdot\; |\; m_k)$ converge weakly to the measure 
  ${\mathcal P}(\cdot\; |\; m)$ on $H$.
\end{corollary}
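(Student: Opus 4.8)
The plan is to deduce this corollary directly from Theorem \ref{LSS_result}, recognizing that weak convergence of the posterior measures is nothing more than convergence of the associated reconstructors evaluated at bounded continuous test functions. By definition, $\mathcal{P}_{kn}(\cdot\,|\,m_k)$ converges weakly to $\mathcal{P}(\cdot\,|\,m)$ precisely when
\[
\lim_{k,n\to\infty}\int_H f(u)\, d\mathcal{P}_{kn}(u\,|\,m_k) = \int_H f(u)\, d\mathcal{P}(u\,|\,m)
\]
for every bounded continuous $f:H\to\R$, so it suffices to fix such an $f$ and establish this single limit.

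First I would identify each side of the target limit as a reconstructor. From the explicit quotient formula \eqref{recon_formula}, the measure $\mathcal{P}(\cdot\,|\,m)$ is exactly the normalized measure $\Xi(u,m)\,d\lambda(u)/\int_H \Xi(\cdot,m)\,d\lambda$, where $\lambda$ is the prior of $U$; indeed $\mathcal{P}(E\,|\,m)=\mathcal{R}_M(\onef_E(U)\,|\,m)$ is by construction the $\Xi$-weighted mass of $E$, and setting $E=H$ shows it is a probability measure. Since the right-hand side of \eqref{recon_formula} is linear in $g$, integrating $f$ against this normalized measure yields $\int_H f\, d\mathcal{P}(\cdot\,|\,m)=\mathcal{R}_M(f(U)\,|\,m)$. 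The same identity holds at each discretization level, $\int_H f\, d\mathcal{P}_{kn}(\cdot\,|\,m_k)=\mathcal{R}_{M_{kn}}(f(U_n)\,|\,m_k)$, now with the prior $\lambda_n$ of $U_n$ and the weight attached to model \eqref{apprmodel}. The only care needed here is the routine passage from indicators to bounded measurable functions, which is immediate from the quotient structure of \eqref{recon_formula}.

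Next I would apply Theorem \ref{LSS_result} with target space $\widetilde{H}=\R$ and $g=f$. The hypotheses transfer at once: $f$ is continuous by assumption, and because it is bounded it satisfies the growth bound $\abs{f(u)}\le \norm{f}_{\infty}\le C\exp(C\norm{u}_H)$ trivially, while conditions (i) and (ii) on the proper measurement projections and on the uniformly discretized priors with exponential weights are exactly those already standing in the corollary. The theorem then gives
\[
\lim_{k,n\to\infty} \mathcal{R}_{M_{kn}}(f(U_n)\,|\,m_k) = \mathcal{R}_M(f(U)\,|\,m)
\]
in $\R$. Combining this with the two identifications from the previous step produces the desired limit for the integrals of $f$, and since $f$ was an arbitrary bounded continuous function, weak convergence follows.

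I do not anticipate a genuine obstacle here: the entire analytic content, namely the simultaneous $k,n\to\infty$ control of the ratio of $\Xi$-integrals against priors $\lambda_n$ that are only known to converge weakly with uniform exponential weights, has already been absorbed into Theorem \ref{LSS_result}. The one point deserving a little attention is the bookkeeping of which prior ($\lambda$ versus $\lambda_n$) and which measurement ($m$ versus $m_k$) enters each reconstructor, so that the test-function identity $\int_H f\, d\mathcal{P}=\mathcal{R}(f(U)\,|\,\cdot)$ is applied consistently on both sides. Once that is pinned down, the proof reduces to a one-line invocation of the theorem with $g=f$.
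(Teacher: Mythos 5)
Your argument is correct and is exactly the intended deduction: the paper itself gives no proof of this corollary (it defers to \cite{LSS}), but the natural route is precisely yours, namely to note that with the specific choice \eqref{recon_formula} the posterior $\mathcal{P}(\cdot\,|\,m)$ is the normalized measure $\Xi(\cdot,m)\,d\lambda/\int_H\Xi(\cdot,m)\,d\lambda$, so that $\int_H f\,d\mathcal{P}(\cdot\,|\,m)=\mathcal{R}_M(f(U)\,|\,m)$ for bounded continuous $f$, and then to apply Theorem \ref{LSS_result} with $\widetilde{H}=\R$ and $g=f$, the growth condition (iii) being trivially satisfied by boundedness. Your care in testing against continuous $f$ rather than indicators (which would violate the continuity hypothesis of the theorem) is exactly the right bookkeeping.
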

We conclude this section by discussing shortly how to solve reconstructors in practice.
For the moment assume that all the conditions in Theorem \ref{LSS_result} hold 
and $\dim {\rm Ran}(P_k) = K\in \N$.
Moreover, assume $U_n: \Omega \to H_n\subset H$ where $\dim H_n = N\in \N$.
Let ${\mathcal I}_n : H_n \to \R^N$ and ${\mathcal K}_k : {\rm Ran} (P_k) \to \R^K$ be isometries
and let us use them to map the computational model \eqref{compmodel} into a matrix equation.
In the following we use bolded notation for vectors and matrices in Euclidian spaces.
Denote ${\bf U}_n = {\mathcal I}_n U_n = (\bu^N_1, ..., \bu^N_N)^T : \Omega \to \R^N$.
This yields
\begin{equation}
  \label{matrixmodel}
  {\bf M}_{kn} = {\mathcal K}_k M_{kn}= {\bf A}_{kn} {\bf U}_n + {\bf E}_k
\end{equation}
where ${\bf A}_{kn} \in \R^{K\times N}$ and ${\bf M}_{kn},{\bf E}_k : \Omega \to \R^K$.
The posterior density function $\pi_{kn}$ can now be easily obtained for problem \eqref{matrixmodel} via the Bayes formula.
In Section \ref{sec:comp} assumptions on the noise $\Ec$ 
and the measurement projections imply that ${\bf E}_k$ is white noise.
In such a case $\pi_{kn}$ has the form 
\begin{equation*}
  \pi_{kn}(\bu_n \;|\; \bm_k) =
  \frac{\Pi_n(\bu_n) 
    \exp(-\frac 12 \norm{\bm_k - {\bf A}_{kn} \bu_n}^2_2)}{\Upsilon_{kn}(\bm_{kn})},
\end{equation*}
where $\Pi_n$ is the prior density and $\Upsilon_{kn}$ is the density function of ${\bf M}_{kn}$.
For a related discussion on the discretization of white noise see the Appendix B in \cite{LSS}.
The CM estimate corresponds to a reconstructor with $g = {\rm id} : H\to H$ and 
it can be obtained by computing integral
\begin{equation}
  \label{cmest}
  \bu^{CM}_{kn} := \int_{\R^N} \bu \pi_{kn} (\bu \; | \; \bm_k) d\bu
\end{equation}
since with the choice of reconstructors in equation \eqref{recon_formula} it holds that
\begin{equation}
  \label{recon_vs_cm}
  {\mathcal R}_{M_{kn}} (U_n \; | \; m_k) = {\mathcal I}^{-1}_n \left( \bu^{CM}_{kn}\right)
\end{equation}
for any $k,n\in\N$.


\section{The continuous prior model}

In this section we introduce a hierarchical probability distribution in $L^2(\T) \times L^2(\T)$ and
prove that it is well-defined. 
Denote first by $\td$ a perturbed derivation
\begin{equation}
  \label{perturbed_der}
  \td = D + \ep^q P : H^1(\T) \to L^2(\T)
\end{equation}
with some $q>1$ and a projection operator $P f(x) = (\int_\T f(t) dt)\onef(x)$ for $f\in L^1(\T)$ and
$\onef(x)=1$ for every $x\in\T$.
The reason for this perturbation
is that the operator $\td : H^1(\T) \to L^2(\T)$ is invertible.
Also denote $L = \td^{-1} : L^2(\T) \to L^2(\T)$ and a multiplication operator
$\Lambda : L^2(\T) \to \Ll(L^2(\T))$ by
\begin{equation*}
  \Lambda(v) f = (\ep^2+v^2)^{-1} f
\end{equation*}
for any $v,f \in L^2(\T)$. Define operators
\begin{equation}
  \label{covariances}
  C_V = \left(\frac{1}{4\ep}I-\ep \Delta\right)^{-1} \quad {\rm and} \quad
  C_U(v) = L \Lambda(v) L^*
\end{equation}
on $L^2(\T)$ with each $v\in L^2(\T)$ where $L^*$ is the Hilbert-adjoint of $L$. 
It is straightforward to show that both operators ($C_U(v)$ with fixed $v$)
are positive self-adjoint trace class operators. This allows us to define the following
Gaussian measures on $L^2(\T)$ which we use in the construction of the prior probability distribution.
\begin{definition}
  \label{def:prior_measures}
  Let $\nu$ be the Gaussian measure on $L^2(\T)$ centered at value $\onef(x)\equiv 1$ with
  covariance operator $C_V$ and with given  $v\in L^2(\T)$ let $\lambda^v$ be the Gaussian measure
  on $L^2(\T)$ centered at $0$ with covariance operator $C_U(v)$.
\end{definition}
\begin{remark}
\label{remark_one}
Now a possible way to proceed is to define
a probability measure $\lambda$ on $(L^2(\T)\times L^2(\T), {\mathcal B}(L^2(\T)\times L^2(\T)))$ 
in such a way that with any measurable sets $E,F\subset {\mathcal B}(L^2(\T))$ we have
\begin{equation}
  \label{jointprior}
  \lambda(E \times F) = \int_F \lambda^v (E) d\nu(v)
\end{equation}
and assign $\lambda$ as a distribution to a random variable $(U,V) : \Omega \to L^2(\T)\times L^2(\T)$.
In fact, finding a unique extension to $\lambda$ for all Borel sets
connects this problem to more general considerations of 
the existence of Markov chains with given transition
operators \cite{GS2,Fu,BG}. The unique extension can be shown to exist
using results related to stochastic kernels \cite{kallenberg}.
Also, in the framework of $M$-spaces and Markov operators 
the extension result here can be proved using Lemma 1.3 in \cite{Pi}.

However, in the rest of the paper the marginal distributions of $\lambda$
play a central role. 
We achieve more flexible framework especially for the analysis of
the discretized distributions
by constructing a suitable probability space and defining random variables $U$
and $V$ separately. Consequently, we exclude the extension proof at this stage since later
the joint distribution of $(U,V)$ is shown to satisfy 
equation \eqref{jointprior} as a byproduct of the construction.
\end{remark}

\begin{remark}
  \label{remark_two}
  Throughout the rest of the paper we keep $\epsilon>0$ and $q>1$ fixed. 
  The role of $\epsilon$ is to control how sharp edges we will have in the reconstructions.
\end{remark}

To simplify our notations we assume that the probability space has the additional structure
$\Omega_{pr}=\Omega_1\times\Omega_2$, $\Sigma_{pr}=\overline{\Sigma_1\otimes\Sigma_2}$ and
$\Pro_{pr}=\Pro_1\otimes\Pro_2$. 
\begin{definition}
  Let $V : \Omega_2\to L^2(\T)$ be a random variable with distribution $\nu$.
\end{definition}
We note that $V$ has a very similar distribution with the so-called Gaussian smoothness prior.
The smoothness prior is well-known to have realizations in $H^s(\T)$ almost surely
for any $s<1/2$ and this can similarly be shown to $V$.
In fact here the one-dimensional domain allows us to go further with the smoothness.
Below the notation $C^{0,\alpha}$ refers to H\"older spaces with exponent $\alpha>0$ 
and $W^{t,p}$ denotes the $L^p$-based Sobolev space with exponent $t\in\R$ (see \cite{Adams}).

\begin{lemma}
  \label{smoothnessprior}
  The random variable $V : \Omega_2 \to L^2(\T)$ satisfies following two statements:
  \begin{itemize}
  \item[(i)] For any $t<1/2$ and $1<p<\infty$ we have $V \in W^{t,p}(\T)$ almost surely,
  \begin{equation*}
    \expec \norm{V-\onef}^p_{W^{t,p}}<\infty
  \end{equation*}
  and there exists a version $V'$ of $V$ such that $V':\Omega_2\to W^{t,p}(\T)$
  is measurable.
  \item[(ii)] For any $0<\alpha<1/2$ we have $V \in C^{0,\alpha}(\T)$ almost surely and
  \begin{equation*}
    \expec \norm{V-\onef}_{C^{0,\alpha}} < \infty.
  \end{equation*}
  \end{itemize}
\end{lemma}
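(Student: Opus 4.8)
The plan is to reduce both statements to the centered field $W = V - \onef$, which is Gaussian with covariance operator $C_V$; since the constant $\onef$ is smooth and lies in every $W^{t,p}(\T)$ and every $C^{0,\alpha}(\T)$, it suffices to prove the two statements for $W$. I would diagonalize $C_V$ in the eigenbasis of $-\Delta$ on $\T$: since $\frac{1}{4\epsilon}I-\epsilon\Delta$ has eigenvalues $\frac{1}{4\epsilon}+\epsilon(2\pi k)^2$, the operator $C_V$ has eigenvalues $\lambda_k=\left(\frac{1}{4\epsilon}+\epsilon(2\pi k)^2\right)^{-1}$, so $\lambda_k \preceq k^{-2}$ for the large modes. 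This is exactly the eigenvalue decay of the classical smoothness prior, and I would write the Karhunen--Loève expansion $W=\sum_k \sqrt{\lambda_k}\,\xi_k e_k$ with $\xi_k$ i.i.d.\ standard Gaussians and $e_k$ the trigonometric eigenbasis.

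The heart of the argument is the increment variance bound $\expec|W(x)-W(y)|^2 \preceq |x-y|$. For the smooth truncations $W_N=\sum_{|k|\le N}\sqrt{\lambda_k}\,\xi_k e_k$ pointwise evaluation is legitimate, and orthonormality gives $\expec|W_N(x)-W_N(y)|^2=\sum_{|k|\le N}\lambda_k\,|e_k(x)-e_k(y)|^2=\sum_{|k|\le N}\lambda_k\,4\sin^2(\pi k(x-y))$. Using $\lambda_k\preceq k^{-2}$ and splitting the sum at $|k|\sim 1/|x-y|$ (bounding $\sin^2(\pi k r)\preceq (kr)^2$ for $|k|\le 1/r$ and $\sin^2\le 1$ otherwise, with $r=|x-y|$), both pieces contribute $O(r)$, so $\expec|W_N(x)-W_N(y)|^2\preceq|x-y|$ uniformly in $N$; equivalently one may read this off the exact identity $\sum_{k\ge1}\sin^2(\pi k r)/k^2=(\pi r-r^2)/2$. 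Since increments of a Gaussian field are Gaussian, the moment equivalence $\expec|Z|^p=c_p(\expec|Z|^2)^{p/2}$ upgrades this to $\expec|W_N(x)-W_N(y)|^p\preceq|x-y|^{p/2}$ for every $1<p<\infty$, again uniformly in $N$.

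For part (i) I would use the Gagliardo--Slobodeckij representation $\norm{f}_{W^{t,p}}^p\preceq\norm{f}_{L^p}^p+\int_\T\int_\T|f(x)-f(y)|^p\,|x-y|^{-1-tp}\,dx\,dy$ for $0<t<1$. Applying it to an increment $W_N-W_M$, taking expectations and using Fubini, the seminorm part is bounded by $\int_\T\int_\T\expec|(W_N-W_M)(x)-(W_N-W_M)(y)|^p\,|x-y|^{-1-tp}\,dx\,dy\preceq\int_\T\int_\T|x-y|^{p/2-1-tp}\,dx\,dy$, which is finite exactly when $t<1/2$; the same uniform bound serves as an integrable dominating function, so by dominated convergence this tends to $0$ as $M,N\to\infty$, while the $L^p$-part is controlled by $\expec|(W_N-W_M)(x)|^2=\sum_{M<|k|\le N}\lambda_k\to0$ (the tail of $\trace C_V<\infty$). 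Hence $(W_N)$ is Cauchy in $L^p(\Omega;W^{t,p}(\T))$, and its limit is a measurable $W^{t,p}(\T)$-valued random variable of finite $p$-th moment that coincides almost surely with $W$ as an $L^2(\T)$-valued map. This simultaneously yields $W\in W^{t,p}(\T)$ almost surely, the bound $\expec\norm{W}_{W^{t,p}}^p<\infty$, and the measurable version $V'=W+\onef$.

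Part (ii) then follows from part (i) by the one-dimensional Sobolev embedding $W^{t,p}(\T)\hookrightarrow C^{0,\alpha}(\T)$ with $\alpha=t-1/p$, valid whenever $0<t-1/p<1$: given $0<\alpha<1/2$, choose $p$ large enough that $1/p<1/2-\alpha$ and set $t=\alpha+1/p<1/2$, whence $\expec\norm{W}_{C^{0,\alpha}}\preceq\expec\norm{W}_{W^{t,p}}\le(\expec\norm{W}_{W^{t,p}}^p)^{1/p}<\infty$. I expect the spectral increment estimate to be routine once the sum is split at the correct scale; the genuine care is needed in the functional-analytic bookkeeping of the last step of part (i), namely passing from the smooth truncations $W_N$ --- the only objects for which pointwise increments are meaningful --- to $W$ itself and verifying that the limit is a measurable map into the finer space $W^{t,p}(\T)$ rather than merely into $L^2(\T)$.
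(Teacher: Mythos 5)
Your proof is correct, but it follows a genuinely different route from the paper's. You diagonalize $C_V$ explicitly in the Fourier basis, derive the increment bound $\expec|W(x)-W(y)|^2 \preceq |x-y|$ by splitting the spectral sum at $|k|\sim|x-y|^{-1}$, upgrade to $p$-th moments by Gaussian hypercontractivity, and control the Gagliardo--Slobodeckij seminorm by Fubini, obtaining the measurable $W^{t,p}$-valued version as an $L^p(\Omega;W^{t,p})$-limit of the smooth truncations $W_N$. The paper instead works at the level of the covariance kernel: it identifies $K_{V'}$ as the Green function of $\frac{1}{4\epsilon}I-\epsilon\Delta$, notes it is Lipschitz, applies $(1-\Delta_x)^{t/2}(1-\Delta_y)^{t/2}$ to get a kernel in $C^{0,1-2t}$ (hence bounded and continuous for $t<1/2$), and then invokes Bogachev's Proposition 3.11.15 to produce a Gaussian random variable in $L^p$ with that covariance, pulling back by $(I-\Delta)^{-t/2}$ to land in $W^{t,p}$; part (ii) is the same Sobolev embedding in both arguments. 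Your approach is more elementary and self-contained, exploiting the constant-coefficient structure on $\T$, and it makes the threshold $t<1/2$ transparent as the exponent where $|x-y|^{p/2-1-tp}$ ceases to be integrable; the cost is the extra bookkeeping in passing from pointwise increments of the truncations to the limit object, and a tacit identification of the Slobodeckij scale with the paper's Sobolev scale (harmless here since the claim is open in $t$). The paper's kernel-regularity argument is less explicit but more robust — it would survive a variable-coefficient covariance with no computable eigenbasis — and it delivers the measurable $W^{t,p}$-valued version directly from the abstract existence theorem rather than via a Cauchy-sequence construction. One cosmetic slip: the closed form of $\sum_{k\ge1}\sin^2(\pi k r)/k^2$ is $\pi^2(r-r^2)/2$ rather than $(\pi r-r^2)/2$, but this affects nothing since only the order $O(r)$ is used.
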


\begin{proof}
Consider the centered variable $V' = V-\onef$.
By the Schwartz kernel theorem there exists a unique
distribution $K_{V'} \in {\mathcal D}'(\T  \times \T )$ such that 
$\bra C_{V'} \phi,\psi\cet = \bra K_{V'}, \phi \otimes \psi \cet$.
It is straightforward to verify that $K_{V'}$ 
is the Green function of $\frac{1}{4\epsilon}I-\epsilon \Delta$.
Such a function is known to be Lipschitz continuous, i.e., $K_{V'}\in C^{0,1}(\T\times\T)$
and even in $C^\infty$ outside the diagonal.
Let $t\in [0,\frac 12)$ and define a new kernel $K$ on $\T^2$ as
\begin{equation}
  K(x,y)=(1-\Delta_x)^{t/2}(1-\Delta_y)^{t/2}K_{V'}(x,y).
\end{equation}
Now by \cite[Prop. 13.8.3]{T3} and \cite[Sect. 13, (8.7)]{T3}, we have
$K(x,y)\in C^{0,1-2t}(\T \times \T )$
and since $t<\frac 12$, we have in particular that $K$ is continuous and bounded. 
By \cite[Prop. 3.11.15]{B} we have
that for any $1<p<\infty$ there exists a random variable $V_p$ in $L^p(\T)$
with covariance operator $C_p : L^{p'}(\T) \to L^p(\T)$, $\frac 1p + \frac 1{p'} = 1$,
such that
\begin{equation*}
  C_p f(x) = \int_\T K(x,y) f(y) dy.
\end{equation*}
Furthermore, $V_p$ satisfies
\begin{equation*}
  \expec \norm{V_p}^p_{L^p} < \infty.
\end{equation*}
Due to \cite[Prop. 13.8.3]{T3} and \cite[Sect. 13, (8.7)]{T3} we can define for any $1<p<\infty$
a Gaussian centered random variable $V_p'= (I-\Delta)^{-t/2} V_p$ in $W^{t,p}(\T)$
with the property
\begin{equation*}
  \expec \norm{V_p'}^p_{W^{t,p}} < \infty.
\end{equation*}
One notices that the covariance operator of $V_p'$ coincides with $C_{V'}$.
The claim (i) follows from the two distributions being the same.
Furthermore, the Sobolev embedding theorem states that the space $W^{t,p}(\T)$ can be
embedded compactly into $C^{0,t-1/p}(\T)$ \cite{Adams}. This proves the claim (ii).
\end{proof}

\begin{definition}
  \label{vdef}
  From this moment on in all our analysis we replace $V$ with such a version $V'$ 
  that $V'(\omega_2)\in W^{t_0,p_0}(\T)$ for all $\omega\in\Omega$ with some fixed $t_0$ and $p_0$
  and $V':\Omega_2\to W^{t_0,p_0}(\T)$ is measurable. 
  We keep denoting this new random variable by $V$.
\end{definition}

Let $W:\Omega_1\to H^s(\T)$, $s<-1/2$, be a Gaussian random variable satisfying $\expec W = 0$ and
\begin{equation}
  \label{w_expec2}
  \expec (\bra W,\phi\cet_{H^s}\bra W,\psi\cet_{H^s}) 
  = \bra C_s \phi, \psi\cet_{H^s}
\end{equation}
for any $\phi,\psi\in H^s$ where $C_s=(I-\Delta)^s$.
The random variable $W$ is white noise in $H^s(\T)$ in the sense discussed in Section 2. 

In the following the idea is to define $U(\omega_1,\omega_2)$ by operating to $W(\omega_1)$ with
a square root of the mapping $C_U(V(\omega_2))$.
Since $C_U(V(\omega_2))$ was defined above on $L^2(\T)$ we have to be careful how to define the square root.

Let us begin by defining an unbounded bilinear form $b_v: L^2(\T ) \times L^2(\T ) \to \R$,
\begin{equation}
  b_v[\phi,\psi] = \int_{\T } (\ep^2+v^2) \td \phi \cdot \td \psi dx
\end{equation}
for $\phi,\psi\in H^1(\T)$ and $v\in C^{0,\alpha}(\T )$ with $\alpha>0$. 
Due to \cite[Thm. VI.1.21, Thm.VI.2.1]{Kato} there exists a unique linear self-adjoint operator
$B_v:{\mathcal D}(B_v)\to L^2(\T )$, ${\mathcal D}(B_v)=\{\phi\in L^2(\T ) \; 
| \; (\ep^2+v^2)\td \phi \in H^1(\T )\}$, such that
\begin{equation}
  b_v[\phi,\psi] = \bra B_v \phi, \psi \cet
\end{equation}
for all $\phi,\psi \in {\mathcal D}(B_v)$ and ${\mathcal D}(B_v)$ is dense in $L^2(\T )$. Moreover we can deduce
\begin{equation}
  B_v = \td^* (\ep^2+v^2) \td,
\end{equation}
which is an invertible operator from ${\mathcal D}(B_v)$ to $L^2(\T )$. The operator $\td^*$ denotes
the $L^2$-adjoint of $\td$. Clearly, $B_v$ is the inverse of $C_U(v)$ defined in equation 
\eqref{covariances} for any $v\in C^{0,\alpha}(\T)$.

The operator $B_v$ was constructed in such a way that its spectrum in $L^2(\T)$ is strictly positive, i.e.,
$\sigma(B_v) \subset [c,\infty)$ with $c=c(\ep)>0$.
Next let us study the mapping properties of $B_v$ in $H^1(\T)$.
We notice that $B_v : H^1(\T)\to H^{-1}(\T)$ is an invertible mapping and 
the pairing $\bra B_v u, u\cet_{H^{-1}\times H^1}$ can be estimated with the $H^1$-norm of $u$ from
below. For later purposes choose $\delta=\delta(\ep)>0$ such that it satisfies
\begin{equation}
  \label{h1spectrumestimate}
  \bra B_v u, u \cet_{H^{-1}\times H^1} \geq \delta \norm{u}^2_{H^1}
\end{equation}
for $u\in H^1(\T)$.
It is important to note that both $c$ and $\delta$ are independent of $v$.
As the spectrum of $B_v$ is positive we can define 
a square root of $C_U(v)$ as a Dunford-Taylor integral 
\begin{equation}
  \label{root}
  \Gamma_v =\frac 1{2\pi i}\int_\gamma z^{-1/2}(B_v-z)^{-1}\,dz : H^{-1}(\T) \to H^{-1}(\T)
\end{equation}
where $\gamma$ is the curve 
\begin{equation*}
  \gamma=\{z\in \C:\ \dist(z,\R_-)=\tfrac \delta 2 \}
\end{equation*}
oriented in such a way it turns around the origin in the positive direction.
Furthermore, $z\mapsto z^{-1/2}$ maps $\C\setminus \overline \R_-\to \C$
so that $\R_+$ maps to itself. By \cite[Thm. V.3.35]{Kato}
the restriction of $\Gamma_v$ to $L^2(\T)$ is an unbounded self-adjoint operator and 
by \cite[Lemma V.3.36]{Kato} satisfies
\begin{equation}
  \label{sqrt_eq}
  (\Gamma_v|_{L^2})^2 = B_v^{-1}|_{L^2} = C_U(v)
\end{equation}
in $L^2(\T)$.
Next we prove a uniform bound for the norm of $\Gamma_v$.

\begin{lemma}
  \label{bdednessofA}
  There exists a constant $C=C(s,\delta)$
  such that for any $\alpha>0$ and
  for all $v\in C^{0,\alpha}(\T)$ we have
  \begin{equation}
    \label{normestforA}
    \norm{\Gamma_v}_{\Ll(H^{s},L^2)} \leq C
  \end{equation}
  with $s >-1$.
\end{lemma}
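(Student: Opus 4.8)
The plan is to estimate the Dunford--Taylor integral \eqref{root} directly, by producing a bound on the resolvent $(B_v-z)^{-1}$ along the contour $\gamma$ that is uniform in $v$ and decays fast enough at infinity for the integral to converge in $\Ll(H^{s},L^2)$. The only properties of $B_v$ I use are the two $v$-independent facts already recorded: the coercivity $\bra B_v u,u\cet_{H^{-1}\times H^1}\geq \delta\norm{u}_{H^1}^2$ of \eqref{h1spectrumestimate} and the spectral inclusion $\sigma(B_v)\subset[c,\infty)$; since $\delta$ and $c$ do not depend on $v$, neither will any constant below. I split $\gamma$ into the two horizontal rays $\{\mathrm{Im}\,z=\pm\tfrac{\delta}{2},\ \mathrm{Re}\,z\leq0\}$ and the bounded semicircle $\{|z|=\tfrac{\delta}{2},\ \mathrm{Re}\,z\geq0\}$. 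On all of $\gamma$ the coercivity gives $\mathrm{Re}\,\bra(B_v-z)u,u\cet\geq\tfrac{\delta}{2}\norm{u}_{H^1}^2$, so by Lax--Milgram $\gamma$ lies in the resolvent set and $\norm{(B_v-z)^{-1}}_{\Ll(H^{-1},H^1)}\leq 2/\delta$; on the semicircle, where in addition $|z|^{-1/2}$ is bounded, this already makes that part of the integral finite and $v$-independent.

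On the rays the crux is a sharper resolvent bound. Fix $z$ with $\mathrm{Re}\,z\leq0$, take $f\in H^{s}$ and set $u=(B_v-z)^{-1}f\in H^1$. Testing $(B_v-z)u=f$ against $u$ and taking real parts, and using that $\bra B_v u,u\cet$ is real and nonnegative, I obtain
\[
  \delta\norm{u}_{H^1}^2+|\mathrm{Re}\,z|\,\norm{u}_{L^2}^2\ \leq\ \mathrm{Re}\,\bra f,u\cet\ \leq\ \norm{f}_{H^{s}}\norm{u}_{H^{-s}}.
\]
For $-1<s\leq0$ I then insert the interpolation inequality $\norm{u}_{H^{-s}}\leq C\norm{u}_{L^2}^{1+s}\norm{u}_{H^1}^{-s}$ and optimize the two resulting scalar inequalities in $\norm{u}_{H^1}$ and $\norm{u}_{L^2}$, which yields
\[
  \norm{(B_v-z)^{-1}f}_{L^2}\ \leq\ C(s,\delta)\,|\mathrm{Re}\,z|^{-(2+s)/2}\,\norm{f}_{H^{s}}.
\]
The case $s\geq0$ needs nothing new, since $H^{s}\hookrightarrow L^2$ reduces it to $s=0$, which is already covered.

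Feeding this into \eqref{root} gives
$\norm{\Gamma_v}_{\Ll(H^{s},L^2)}\leq\tfrac1{2\pi}\int_\gamma|z|^{-1/2}\norm{(B_v-z)^{-1}}_{\Ll(H^{s},L^2)}\,|dz|$, and on each ray the integrand is bounded by a constant times $|z|^{-1/2}|z|^{-(2+s)/2}=|z|^{-(3+s)/2}$. The one-dimensional integral $\int^\infty r^{-(3+s)/2}\,dr$ converges exactly when $s>-1$, so the ray contributions are finite; together with the bounded semicircle contribution this gives \eqref{normestforA} with a constant depending only on $s$ and $\delta$.

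The delicate point I expect to be the main obstacle is getting the decay exponent sharp. Interpolating the resolvent merely between the endpoint bounds $\Ll(H^{-1},H^1)$ and $\Ll(L^2,L^2)$ and then embedding the target into $L^2$ only yields decay $|z|^{-(1+s)}$, whose pairing with $|z|^{-1/2}$ forces $s>-1/2$ and misses the stated range. The improvement to $|\mathrm{Re}\,z|^{-(2+s)/2}$ comes from using the \emph{joint} inequality of the second paragraph, which controls $\norm{u}_{H^1}$ and $\norm{u}_{L^2}$ simultaneously rather than through two separate endpoint estimates; it is this simultaneous control, at the logarithmic borderline of the contour integral, that pins the admissible exponents down to exactly $s>-1$.
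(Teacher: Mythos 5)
Your proof is correct and arrives at the same resolvent decay rate $|z|^{-1-s/2}$ on $\gamma$ that the paper establishes, but by a somewhat different (and more elementary) route. The paper derives two endpoint bounds from the same energy identity you use --- $\norm{(z-B_v)^{-1}}_{\Ll(L^2)}\leq \dist(z,\sigma(B_v))^{-1}$ and $\norm{(z-B_v)^{-1}}_{\Ll(H^{-1},L^2)}\preceq|z|^{-1/2}$ --- and then invokes operator interpolation to cover $-1\leq s\leq 0$. Your cases $s=0$ and $s=-1$ coincide with those endpoints, but instead of interpolating operators you insert the scalar inequality $\norm{u}_{H^{-s}}\leq C\norm{u}_{L^2}^{1+s}\norm{u}_{H^1}^{-s}$ into the single energy inequality $\delta\norm{u}_{H^1}^2+|\mathrm{Re}\,z|\,\norm{u}_{L^2}^2\leq\norm{f}_{H^{s}}\norm{u}_{H^{-s}}$ and optimize; this avoids abstract interpolation machinery and makes the $\delta$-dependence of the constant explicit. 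Your closing diagnosis is also accurate: interpolating only between the $\Ll(H^{-1},H^1)$ and $\Ll(L^2)$ bounds does not reach $s>-1$, and the paper circumvents this precisely by first manufacturing the $\Ll(H^{-1},L^2)$ bound carrying the extra factor $|z|^{-1/2}$. One small imprecision to note: on the rays your estimate is in terms of $|\mathrm{Re}\,z|^{-(2+s)/2}$, and replacing $|\mathrm{Re}\,z|$ by $|z|$ is legitimate only where $|\mathrm{Re}\,z|\gtrsim\delta$; near the junction with the semicircle $|\mathrm{Re}\,z|\to 0$ while $|z|\geq\delta/2$, so on that (finite-length) portion of $\gamma$ you must fall back on the uniform $\Ll(H^{-1},H^1)$ bound $2/\delta$, exactly as you already do on the semicircle --- this costs nothing and the argument stands.
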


\begin{proof}
Let $\alpha>0$ and $v\in C^{0,\alpha}(\T)$. 
We prove the claim by interpolation arguments. First note that
\begin{equation}
  \label{interp 1}
  \norm{(B_v-z)^{-1}}_{\Ll(L^2)} \leq \frac 1{\dist(z,\sigma(B_V))}
\end{equation}
for any $z\in\gamma$. Recall now that $B_v-z$ with $z\in\gamma$ is an invertible operator between
spaces $H^1(\T)$ and $H^{-1}(\T)$. We assume that $f\in H^{-1}(\T)$ and $u\in H^1(\T)$
satisfy equation 
\begin{equation}
  \label{ellipticeq}
  (B_v-z )u=f
\end{equation}
in $H^{-1}(\T)$ for some $z\in\gamma$.
Taking duality pairing of $f$ with $u$ in equation \eqref{ellipticeq} yields then
\begin{equation}
  \label{ellipticeq_adjusted}
  \bra B_v u,u\cet_{H^{-1}\times H^1}= z\norm{u}_{L^2}^2+\bra f,u\cet_{H^{-1}\times H^1}.
\end{equation}
For $z\in \gamma$ we have ${\rm Re }(z)<\delta/2$ and thus
\begin{equation}
  \label{eqdivi}
  \bra B_v u,u\cet_{H^{-1}\times H^1} \leq \frac \delta 2 \norm{u}_{L^2}^2
  +\hbox{Re}\,\bra f,u\cet_{H^{-1}\times H^1}.
\end{equation}
Combining inequalities \eqref{eqdivi} and \eqref{h1spectrumestimate} we get
\begin{equation*}
  \delta\norm{u}_{H^1}^2 \leq \frac {\delta}2\norm{u}_{H^1}^2 + \norm{u}_{H^1}\norm{f}_{H^{-1}}.
\end{equation*}
This yields the bound
\begin{equation}
  \label{boundforb}
  \norm{(z-B_v)^{-1}}_{\Ll(H^{-1},H^1)}\leq \frac 2{\delta}
\end{equation}
when $z\in \gamma$. The equation \eqref{ellipticeq_adjusted} implies
\begin{equation}
  \hbox{Re}\,(-z+\delta)\norm{u}_{L^2}^2=
  -(\bra B_v u,u\cet-\delta\norm{u}_{L^2}^2)+\hbox{Re}\, \bra f,u\cet_{H^{-1}\times H^1}\\ 
\end{equation}
where we have added the term $\delta \norm{u}^2_{L^2}$ and taken the real part.
Again due to inequality \eqref{h1spectrumestimate} the right hand side is less than
$\hbox{Re}\,\bra f,u\cet_{H^{-1}\times H^1}$.
Furthermore by applying the Cauchy-Schwarz inequality and inequality \eqref{boundforb} we have 
\begin{equation}
  \norm{u}_{L^2}^2\leq \frac 1{\hbox{Re}\,(-z+\delta)} \frac 2{\delta}\norm{f}_{H^{-1}}^2 
\end{equation}
which proves the estimate 
\begin{equation}
  \label{interp 2}
  \norm{(z-B_v)^{-1}}_{\Ll(H^{-1},L^2)}\preceq |z|^{-1/2}
\end{equation}
with $z\in\gamma$.
Now we are ready to interpolate (see, e.g., \cite[Prop. 13.6.2]{T3},
\cite{BL} and \cite{Tr}) equations \eqref{interp 1} and \eqref{interp 2}
and get
\begin{equation}
  \label{h-sl2bound}
  \norm{(z-B_v)^{-1}}_{\Ll(H^{s},L^2)}\preceq \left(|z|^{-1/2} \right)^{-s}\left(
  \frac 1{\dist(z,\sigma(B_v))} \right)^{1+s} \preceq |z|^{-1-\frac s2}
\end{equation}
for $-1\leq s \leq 0$.
For $s>-1$ and $z\in \gamma$ we see that
\begin{equation*}
  z^{-1/2}\norm{(z-B_v)^{-1}}_{\Ll(H^s,L^2)}\preceq
  |z|^{-\frac 32 -\frac s2}
\end{equation*}
is an integrable function on $\gamma$. Finally this yields
\begin{equation*}
  \norm{\Gamma_v}_{\Ll(H^s,L^2)}\leq C,
\end{equation*}
for any $s>-1$ with some $C=C(s,\delta)>0$ that is independent of $v$.  
\end{proof}

\begin{definition}
\label{udef}
Define the mapping $U : \Omega \to L^2(\T)$ as
\begin{equation}
  \label{udefeq}
  U(\omega_1,\omega_2) = \Gamma_{V(\omega_2)} W(\omega_1)
\end{equation}
where $W$ is the centered Gaussian random variable 
defined by equation \eqref{w_expec2} in $H^{s}(\T)$ with some $-1<s<-1/2$.
\end{definition}
Let us show that this mapping is measurable and hence a random variable.
Recall that a function $X : \Omega \to H$ is said to be strongly measurable
if there exists a sequence $\{X_j\}_{j=1}^\infty$ of simple functions converging pointwise
to $X$. In separable spaces such as $H^s(\T)$, $s\geq 0$, the measurability is
equivalent to the strong measurability. In addition, an operator valued function 
$X:\Omega \to \Ll(H_1,H_2)$ is said to be strongly measurable if the vector valued
function $\omega\mapsto X(\omega) f$ is strongly measurable in $H_2$ in the sense presented above for all $f\in H_1$.

\begin{proposition}
  \label{prop_av}
  The mapping $\omega_2 \mapsto \Gamma_{V(\omega_2)} \in \Ll(H^s(\T),L^2(\T))$
  is strongly measurable for all $-1<s<-\frac 12$.
\end{proposition}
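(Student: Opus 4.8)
The plan is to reduce the operator-valued strong measurability to a vector-valued one and then push the measurability of $V$ through the Dunford--Taylor integral \eqref{root}. By the definition of strong measurability for operator-valued maps, it suffices to show that for each fixed $f\in H^s(\T)$ the map $\omega_2\mapsto \Gamma_{V(\omega_2)}f\in L^2(\T)$ is strongly measurable; since $L^2(\T)$ is separable, by Pettis' theorem this is equivalent to Borel measurability. Recall from Definition \ref{vdef} that $V$ is measurable as a map $\Omega_2\to W^{t_0,p_0}(\T)$, and by the Sobolev embedding in Lemma \ref{smoothnessprior}(ii) we may regard $V$ as a measurable map into $C^{0,\alpha}(\T)$ for a suitable $\alpha>0$; this is exactly the space on which $v\mapsto B_v$, and hence $v\mapsto\Gamma_v$, is defined.

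The main step is to prove that, for each fixed $z\in\gamma$, the resolvent map
\[
  v\longmapsto (B_v-z)^{-1}f \in L^2(\T)
\]
is continuous on $C^{0,\alpha}(\T)$. I would use the resolvent identity
\[
  (B_{v_1}-z)^{-1}-(B_{v_2}-z)^{-1}
  = (B_{v_1}-z)^{-1}\,(B_{v_2}-B_{v_1})\,(B_{v_2}-z)^{-1},
\]
together with $B_{v_2}-B_{v_1}=\td^{*}\,(v_2^2-v_1^2)\,\td$, and read off the mapping properties across the Sobolev scale: $(B_{v_2}-z)^{-1}f\in H^1(\T)$ with norm $\le\tfrac 2\delta\norm{f}_{H^{-1}}$ by \eqref{boundforb}; the operator $\td^{*}(v_2^2-v_1^2)\td$ sends $H^1(\T)$ into $H^{-1}(\T)$ with norm controlled by $\norm{v_2^2-v_1^2}_\infty$ (since $\td:H^1\to L^2$ and $\td^{*}:L^2\to H^{-1}$ are bounded and multiplication by $v_2^2-v_1^2$ acts on $L^2$ with operator norm $\norm{v_2^2-v_1^2}_\infty$); and finally $(B_{v_1}-z)^{-1}$ maps $H^{-1}(\T)$ into $L^2(\T)$ boundedly by \eqref{interp 2}. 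Chaining these, and using $\norm{f}_{H^{-1}}\le\norm{f}_{H^s}$ for $s>-1$ and $\norm{v_2^2-v_1^2}_\infty\le\norm{v_1+v_2}_\infty\norm{v_1-v_2}_\infty$, gives
\[
  \norm{\big[(B_{v_1}-z)^{-1}-(B_{v_2}-z)^{-1}\big]f}_{L^2}
  \le C(z)\,\norm{v_2^2-v_1^2}_\infty\,\norm{f}_{H^s}.
\]
This is local Lipschitz continuity in the $C^0$-norm, so composing with the measurable $V$ shows that $\omega_2\mapsto z^{-1/2}(B_{V(\omega_2)}-z)^{-1}f$ is measurable in $\omega_2$ for every fixed $z\in\gamma$.

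Finally I would transfer this to the contour integral \eqref{root}. For fixed $v$ the integrand $z\mapsto z^{-1/2}(B_v-z)^{-1}f$ is continuous along $\gamma$, and by the pointwise estimate established in the proof of Lemma \ref{bdednessofA} it is dominated by the $v$-independent integrable function $z\mapsto \abs{z}^{-3/2-s/2}\norm{f}_{H^s}$ on $\gamma$. Truncating to $\gamma_R=\gamma\cap\{\abs{z}\le R\}$, the tail $\int_{\gamma\setminus\gamma_R}$ is bounded uniformly in $v$ (hence in $\omega_2$) and tends to $0$ as $R\to\infty$; on the compact arc $\gamma_R$ the Bochner integral is the $L^2$-limit of Riemann sums $\sum_j z_j^{-1/2}(B_{V(\omega_2)}-z_j)^{-1}f\,\Delta z_j$, each measurable in $\omega_2$ as a finite linear combination of the measurable maps from the previous step. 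Letting the mesh tend to $0$ and then $R\to\infty$ exhibits $\omega_2\mapsto\Gamma_{V(\omega_2)}f$ as a pointwise (in $\omega_2$) limit of measurable $L^2(\T)$-valued functions, hence measurable; as this holds for every $f\in H^s(\T)$, the map $\omega_2\mapsto\Gamma_{V(\omega_2)}$ is strongly measurable.

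I expect the main obstacle to be the resolvent continuity step: organizing the mapping properties of $B_{v_2}-B_{v_1}=\td^{*}(v_2^2-v_1^2)\td$ on the chain $H^1\to H^{-1}$ and combining them with the uniform resolvent bounds of Lemma \ref{bdednessofA} so that the difference is controlled purely by $\norm{v_2^2-v_1^2}_\infty$. The passage through the non-compact contour $\gamma$ is then routine once the $v$-uniform integrable domination from Lemma \ref{bdednessofA} is in hand.
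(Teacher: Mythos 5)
Your proposal is correct, and its analytic core coincides with the paper's: both arguments hinge on the factorization $(B_{v_1}-z)^{-1}(B_{v_2}-B_{v_1})(B_{v_2}-z)^{-1}$ read along the chain $H^s(\T)\to H^1(\T)\to H^{-1}(\T)\to L^2(\T)$, with $B_{v_2}-B_{v_1}=\td'(v_2^2-v_1^2)\td$ controlled by $\norm{v_2^2-v_1^2}_\infty$ and the uniform resolvent bounds of Lemma \ref{bdednessofA}. The difference is in the measure-theoretic packaging. The paper approximates $V$ pointwise by simple $W^{t_0,p_0}$-valued random variables $V_j$, interpolates to get the extra decay $\norm{(B_v-z)^{-1}}_{\Ll(H^s,H^1)}\preceq |z|^{-(1+s)/2}$ (inequality \eqref{ineq1}) so that the \emph{difference} of resolvents is integrable over the whole contour, and concludes $\norm{\Gamma_{V(\omega_2)}-\Gamma_{V_j(\omega_2)}}_{\Ll(H^s,L^2)}\preceq\norm{B_{V(\omega_2)}-B_{V_j(\omega_2)}}_{\Ll(H^1,H^{-1})}\to 0$; this exhibits $\Gamma_V$ directly as a pointwise limit of simple operator-valued functions, in operator norm, which is slightly stronger than what is claimed. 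You instead fix $f$ and $z$, prove sup-norm continuity of $v\mapsto(B_v-z)^{-1}f$ (for which the crude $z$-dependence $C(z)\preceq|z|^{-1/2}$ suffices, so no interpolation of the difference is needed), compose with the measurable $V$, and then pass through the Dunford--Taylor integral by contour truncation and Riemann sums, using the $v$-uniform integrable majorant from Lemma \ref{bdednessofA} for the tails. The cost is the extra truncation and Riemann-sum bookkeeping and a conclusion limited to strong (rather than uniform) measurability, which is all the proposition asks for; the benefit is that you never need the resolvent difference to be integrable along $\gamma$. Both routes are sound.
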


\begin{proof}
Recall from Definition \ref{vdef} that $V$ is
a $W^{t_0,p_0}$-valued random variable. As such a space is separable we have a sequence
of simple random variables $V_j$ converging pointwise to $V$. 
Due to the Sobolev embedding theorem there exists 
$0<\alpha<1/2$ and $C>0$ such that
\begin{equation}
  \label{simpleapproximation}
  \norm{V_j(\omega_2)-V(\omega_2)}_{C^{0,\alpha}} \leq C
  \norm{V_j(\omega_2)-V(\omega_2)}_{W^{t_0,p_0}}
\end{equation}
for all $\omega_2\in \Omega_2$. Hence $V_j$ converges pointwise also in $C^{0,\alpha}(\T)$.
Next fix $\omega_2\in\Omega_2$ and set $v_j=V_j(\omega_2)$ for all $j\in\N$ and $v=V(\omega_2)$.
Let us factorize the operator
\begin{equation*}
  (B_v-z)^{-1}-(B_{v_j}-z)^{-1}=(B_v-z)^{-1}(B_{v_j}-B_v)(B_{v_j}-z)^{-1} : H^s(\T) \to L^2(\T)
\end{equation*}
where the right hand side operators are considered as a sequence of mappings
\begin{equation*}
  \label{schematicmap}
  H^{s}(\T) \xrightarrow{(B_{v_j}-z)^{-1}} H^1(\T ) 
  \xrightarrow{B_{v_j}-B_v} H^{-1}(\T) \xrightarrow{(B_v-z)^{-1}} L^2(\T).
\end{equation*}
An operator and its adjoint have the same norms and, since 
$\{z \; | \; z \in \gamma\} = \{\bar{z} \; | \; z\in\gamma\}$, inequality \eqref{h-sl2bound} yields
\begin{equation}
  \label{ineq0}
  \norm{(B_v-z)^{-1}}_{\Ll(L^2, H^1)} =
  \norm{(B_v-\bar{z})^{-1}}_{\Ll(H^{-1}, L^2)}
  \preceq |z|^{-1/2}.
\end{equation}
Interpolating inequalities \eqref{ineq0} and \eqref{boundforb} gives us
\begin{equation}
  \label{ineq1}
  \norm{(B_v-z)^{-1}}_{\Ll(H^{s}, H^1)} \preceq 
  |z|^{-\frac 12(1+s)}
\end{equation}
for $-1<s<-1/2$.
In the same way as above we see how the operator $B_v-B_{v_j}$ maps
\begin{equation*}
  H^1(\T ) \xrightarrow{\td} L^2(\T ) \xrightarrow{(v^2-v_j^2)Id} L^2(\T ) 
  \xrightarrow{\td'} H^{-1}(\T ).
\end{equation*}
In this framework the operators $\td$ and $\td'$ are both bounded. 
The multiplication operator is also bounded
and converges to zero in the norm topology due to 
\eqref{simpleapproximation} as $j$ increases. 
Altogether this yields
\begin{equation}
  \label{ineq2}
  \lim_{j\to\infty}\norm{B_{v_j}-B_v}_{\Ll(H^1,H^{-1})} = 0.
\end{equation}
Now returning to random variables $V_j$ and $V$ and adding up inequality
\eqref{ineq1} with \eqref{interp 2} we get
\begin{multline}
  \norm{(B_{V(\omega_2)}-z)^{-1}-(B_{V_j(\omega_2)}-z)^{-1}}_{\Ll(H^s,L^2)} \nonumber \\
  \leq C \norm{B_{V(\omega_2)}-B_{V_j(\omega_2)}}_{\Ll(H^1,H^{-1})} |z|^{-1+\frac s2}
  \label{ineqjotain}
\end{multline}
for all $\omega_2\in\Omega_2$ and furthermore
\begin{multline*}
  \norm{(\Gamma_{V(\omega_2)}- \Gamma_{V_j(\omega_2)})f}_{L^2} \\
  \leq C\int_\gamma |z|^{-\frac 32+\frac s2}
  \norm{B_{V(\omega_2)}-B_{V_j(\omega_2)}}_{\Ll(H^1,H^{-1})}\norm{f}_{H^s} dz \\
  \leq C\norm{B_{V(\omega_2)}-B_{V_j(\omega_2)}}_{\Ll(H^1,H^{-1})} \norm{f}_{H^s}
\end{multline*}
for all $f\in H^s(\T)$ and $\omega_2\in\Omega_2$. Due to equation \eqref{ineq2} this proves the claim.
\end{proof}

\begin{corollary}
  The mapping $U : \Omega \to L^2(\T)$ in Definition \ref{udef} is strongly measurable.
\end{corollary}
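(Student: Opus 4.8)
The plan is to deduce the strong measurability of $U$ from Proposition \ref{prop_av} (strong measurability of the operator-valued map $\omega_2\mapsto\Gamma_{V(\omega_2)}$) together with the uniform operator bound of Lemma \ref{bdednessofA}, by approximating $W$ with simple functions. Since $U(\omega_1,\omega_2)=\Gamma_{V(\omega_2)}W(\omega_1)$ depends only on $(\omega_1,\omega_2)\in\Omega_1\times\Omega_2$, it suffices to prove strong measurability on $\Omega_1\times\Omega_2$; the statement on $\Omega$ then follows by composing with the coordinate projection. The only genuine difficulty is that we must combine an operator-valued factor depending on $\omega_2$ with a vector-valued factor depending on $\omega_1$ into a single jointly strongly measurable $L^2(\T)$-valued map, and then pass to a limit.

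First I would recall that $H^s(\T)$ is separable, so the $H^s(\T)$-valued random variable $W:\Omega_1\to H^s(\T)$ is strongly measurable. Hence there is a sequence of simple functions
\begin{equation*}
  W_m=\sum_{i=1}^{N_m} f_i^{(m)}\,\onef_{A_i^{(m)}}, \qquad f_i^{(m)}\in H^s(\T),\ A_i^{(m)}\in\Sigma_1,
\end{equation*}
converging pointwise to $W$ in $H^s(\T)$ on $\Omega_1$.

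Next I would show that each approximant $(\omega_1,\omega_2)\mapsto \Gamma_{V(\omega_2)}W_m(\omega_1)$ is strongly measurable. Writing
\begin{equation*}
  \Gamma_{V(\omega_2)}W_m(\omega_1)=\sum_{i=1}^{N_m}\onef_{A_i^{(m)}}(\omega_1)\,\Gamma_{V(\omega_2)}f_i^{(m)},
\end{equation*}
each summand is the product of the scalar measurable function $\omega_1\mapsto\onef_{A_i^{(m)}}(\omega_1)$ and, by Proposition \ref{prop_av}, the strongly measurable $L^2(\T)$-valued function $\omega_2\mapsto\Gamma_{V(\omega_2)}f_i^{(m)}$. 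The product of a scalar measurable function pulled back from $\Omega_1$ and a strongly measurable Banach-space-valued function pulled back from $\Omega_2$ is strongly measurable on $\Omega_1\times\Omega_2$, and finite sums of strongly measurable functions remain strongly measurable; hence $\Gamma_{V(\cdot)}W_m(\cdot)$ is strongly measurable for every $m$.

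Finally, the $v$-independent bound of Lemma \ref{bdednessofA} makes the passage to the limit immediate: for every $(\omega_1,\omega_2)$,
\begin{equation*}
  \norm{\Gamma_{V(\omega_2)}W_m(\omega_1)-U(\omega_1,\omega_2)}_{L^2}
  \leq \norm{\Gamma_{V(\omega_2)}}_{\Ll(H^s,L^2)}\,\norm{W_m(\omega_1)-W(\omega_1)}_{H^s}
  \leq C\,\norm{W_m(\omega_1)-W(\omega_1)}_{H^s},
\end{equation*}
which tends to $0$ as $m\to\infty$ by the choice of $W_m$. Thus $U$ is the pointwise limit of strongly measurable functions and is therefore strongly measurable, and since $L^2(\T)$ is separable this is the desired measurability. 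I expect the main (indeed the only) delicate point to be the interchange of the operator- and vector-valued factors across the two independent probability spaces; it is precisely the uniform estimate $\norm{\Gamma_v}_{\Ll(H^s,L^2)}\leq C$ that controls this limit and guarantees the approximation is valid uniformly enough to preserve measurability.
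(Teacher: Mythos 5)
Your proof is correct and follows essentially the same route as the paper: approximate by simple functions, invoke Proposition \ref{prop_av}, and pass to the pointwise limit using the boundedness of $\Gamma_{V(\omega_2)}$ in $\Ll(H^s,L^2)$. The only (harmless) variation is that the paper approximates both factors --- taking simple $\Gamma_{V_j}$ \emph{and} simple $W_j$ so that the products are themselves simple, and then using a two-term triangle inequality --- whereas you approximate only $W$ and handle the resulting terms $\onef_{A_i}(\omega_1)\,\Gamma_{V(\omega_2)}f_i$ via the definition of strong measurability of the operator-valued map; this needs only the weaker consequence of Proposition \ref{prop_av} and, as you note, the uniform bound of Lemma \ref{bdednessofA} is more than enough (pointwise finiteness of $\norm{\Gamma_{V(\omega_2)}}_{\Ll(H^s,L^2)}$ would already suffice for the limit).
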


\begin{proof}
According to the Proposition \ref{prop_av} we can take simple random variables $\Gamma_{V_j}$ 
that converge pointwise to $\Gamma_V$ in ${\mathcal L}(H^{s}(\T),L^2(\T))$ and simple random variables $W_j$ that
converge pointwise to $W$ in $H^{s}(\T)$ with $s<-1/2$. Now for any $\omega=(\omega_1,\omega_2)\in \Omega$
we have that
\begin{multline*}
  \norm{\Gamma_{V(\omega_2)} W(\omega_1)-\Gamma_{V_j(\omega_2)}W_j(\omega_1)}_{L^2}
  \leq
  \norm{\Gamma_{V(\omega_2)}}_{\Ll(H^s,L^2)} \norm{W(\omega_1)-W_j(\omega_1)}_{H^s} \\
  + \norm{\Gamma_{V(\omega_2)}-\Gamma_{V_j(\omega_2)}}_{\Ll(H^s,L^2)} \norm{W_j(\omega_1)}_{H^s}
\end{multline*}
converges to zero for $-1<s<-1/2$.
\end{proof}

Let us return to the discussion in Remark \ref{remark_one}. 
Also let $-1<s<-1/2$ and fix $\omega_2\in\Omega_2$ and $v=V(\omega_2)$. 
For any $\phi,\psi\in L^2(\T)$ we have
\begin{eqnarray*}
  \expec \bra U(\cdot,\omega_2), \phi\cet_{L^2} \bra U(\cdot,\omega_2),\psi\cet_{L^2}
  & = & \expec \bra W(\cdot), \Gamma_v'\phi\cet_{H^s\times H^{-s}} 
  \bra W(\cdot),\Gamma_v'\psi\cet_{H^s\times H^{-s}} \\
  & = & \expec \bra W(\cdot), C_{-s} \Gamma_v' \phi\cet_{H^{s}}
  \bra W(\cdot), C_{-s} \Gamma_v' \psi\cet_{H^{s}} \\
  & = & \bra C_s C_{-s} \Gamma_v' \phi, C_{-s} \Gamma_v' \psi\cet_{H^{s}}\\
  & = & \bra \Gamma_v^2 \phi, \psi \cet_{L^2} \\
  & = & \bra C_U(v) \phi,\psi\cet_{L^2}
\end{eqnarray*}
where $C_t = (I-\Delta)^t$ for $t\in\R$.
By the Fubini theorem we can deduce that the probability distribution of $(U,V)$ on $L^2(\T)\times L^2(\T)$ is some extension of $\lambda$ defined in equation \eqref{jointprior}.


\section{The finite-dimensional prior model}

We have two objectives in the construction of a finite-dimensional prior model 
for the discretized problem \eqref{compmodel}.
Obviously it is necessary to have weakly converging probability measures. 
After defining $U_n$ and $V_n$ this property is proved later in Section 5.
The second objective is to be able to compute the probability densities explicitly.
For anyone applying such a method in practice it is valuable that no additional approximations are needed.
The main difficulty in obtaining the explicit form is clearly the nonlinear
dependence of $C_U(V)$ with $V$.

The following definitions can be intuitively considered as truncated random series or projections
of the original random variables $U$ and $V$. There is a well-known result
\cite[Prop. 3.5.1]{B} about Gaussian series which states that Cameron-Martin
space provides a natural framework for the basis of the series. Also, as we will see, this
approach makes it easier to control the nonlinearity discussed above.

Notice that the Cameron-Martin spaces $H(\nu)$ and $H(\lambda^{V(\omega_2)})$ 
for all fixed $\omega_2\in\Omega_2$ 
have equivalent norms with the standard norm of $H^1(\T)$. More precisely, the norms satisfy
\begin{equation}
  \label{cm_norms}
  \norm{\cdot}^2_{H(\nu)} = \frac{1}{4\ep}\norm{\cdot}^2_{L^2}+ \ep \norm{D\cdot}^2_{L^2} \quad {\rm and} \quad
  \norm{\cdot}^2_{H(\lambda^v)} = \bra (\ep^2+v^2) \td \cdot, \td \cdot\cet_{L^2}.
\end{equation}
This can be shown by density arguments after the equalities are first established for functions in $C^\infty(\T)$.

Inspired by this connection we show that the continuous and piecewise linear functions provide a suitable
framework for the discretizations. For any $n\in\N$ define
\begin{equation}
  PL(n) = \{f\in C(\T) \; | \; f \textrm{ is linear on each } 
  K^N_j, j=1,...,N \} \subset H^1(\T)
\end{equation}
with $K^N_j=[(j-1)/N,j/N)$, $j=1,...,N$. The value of $N$ depends on $n$ and for 
the rest of the paper we fix notation
\begin{equation*}
  N = N(n) = 2^n.
\end{equation*}
In addition, whenever needed we
consider $\T$ as the closed interval $[0,1]$ with the point $1$ identified as $0$.
Notice that with the notation above $PL(n) \subset PL(n+1)$ for all $n\in\N$. Define also piecewise constant functions
on the same mesh
\begin{equation}
  PC(n) = \{f\in L^2(\T) \; | \; f \textrm{ is constant on each } 
  K^N_j, j=1,...,N \} \subset L^2(\T).
\end{equation}
In the following we use frequently the fact that $\td |_{PL(n)} : PL(n) \to PC(n)$ is an invertible mapping.

\subsection{The definition of $V_n$}

Let us consider for a while $H^1(\T)$ equipped with the inner product $\bra\cdot,\cdot\cet_{H(\nu)}$.
Form an orthonormal basis $\{g_j\}_{j=1}^\infty$ with respect to this inner product
so that for each $n\in \N$ the set $\{g_j\}_{j=1}^N$ spans $PL(n)$. 
Define an orthogonal projection $R_n :H^1(\T)\to PL(n)\subset H^1(\T)$ as
\begin{equation*}
  R_n g = \sum_{j=1}^N \bra g,g_j\cet_{H(\nu)}g_j
\end{equation*}
with $g\in H^1(\T)$. A short computation yields that the corresponding adjoint operator in $H^{-1}(\T)$
is
\begin{equation*}
  R_n'g' = \sum_{j=1}^N \bra g',g_j\cet_{H^{-1}\times H^1} C_V^{-1}g_j
\end{equation*}
for any $g'\in H^{-1}(\T)$.
\begin{definition}
  \label{def:vn}
  Define $V_n:\Omega_2\to PL(n)\subset L^2(\T)$ as
  \begin{equation}
    V_n(\omega_2) = \sum_{j=1}^N \bV^N_j(\omega_2) g_j+\onef,
  \end{equation}
  where $\bV^N_j:\Omega_2\to\R$ are independent random variables with standard normal distribution, $\onef(x)\equiv 1$ 
  and $g_j\in PL(n)$ are as chosen above.
  Denote the probability distribution of $V_n$ on $L^2(\T)$ by $\nu_n$.
\end{definition}
Let us shortly consider the covariance operator of $V_n$ in $L^2(\T)$.
Clearly, for any $\phi\in L^2(\T)$ it holds that
\begin{equation*}
  C_V R_n' \phi = R_n C_V \phi.
\end{equation*}
Furthermore, we have that
\begin{eqnarray*}
  \bra C_{V_n} \phi, \psi\cet_{L^2} & = &
  \expec \bra V_n-\onef, \phi\cet_{L^2} \bra V_n-\onef, \psi\cet_{L^2} \\
  & = & \sum_{j,k=1}^N (\expec \bV^N_j \bV^N_k) \bra g_j,\phi\cet_{L^2} \bra g_k,\psi\cet_{L^2} \\
  & = & \bra \sum_{j=1}^N \bra g_j,\phi\cet_{L^2} g_j,\psi\cet_{L^2} \\
  & = & \bra R_n C_V \phi, \psi\cet_{L^2}
\end{eqnarray*}
for any $\phi,\psi\in L^2(\T)$.
Hence we can conclude that
\begin{equation*}
  C_{V_n} = R_n C_V R_n'|_{L^2} : L^2(\T) \to L^2(\T)
\end{equation*}
for all $n\in\N$.

\subsection{The definition of $U_n$}

The discretization method applied to $V$ cannot be used with $U$ since we do not want 
the corresponding basis to depend on realizations of $V$.
To avoid this consider now $H^1(\T)$ equipped with the inner product
\begin{equation*}
  \bra f,g\cet_{\td} = \bra \td f,\td g\cet_{L^2}
\end{equation*}
for $f,g\in H^1(\T)$.
In the same manner as above form an orthonormal basis 
$\{f_j\}_{j=1}^\infty \subset H^1(\T)$ with respect to 
$\bra\cdot, \cdot\cet_{\td}$ so that for each $n\in\N$ the set $\{f_j\}_{j=1}^N$
spans $PL(n)$. Define then an orthogonal projection $S_n: H^1(\T) \to PL(n) \subset H^1(\T)$ as
\begin{equation}
  \label{def:sn}
  S_n f = \sum_{j=1}^N \bra f, f_j \cet_{\td} f_j
\end{equation}
for any $f\in H^1(\T)$. The dual operator $S_n': H^{-1}\to H^{-1}$ can then be written
\begin{equation*}
  S_n' f' = \sum_{j=1}^N \bra f',f_j\cet_{H^{-1}\times H^1} \td' \td f_j
\end{equation*}
for any $f'\in H^{-1}(\T)$.

The functions $\{\td f_j\}_{j=1}^\infty \subset L^2(\T)$ form by definition an
orthonormal basis to $L^2(\T)$ with respect to the usual inner product of $L^2(\T)$.
Denote by $T_n$ the orthogonal projection 
\begin{equation*}
  T_n \phi = \sum_{j=1}^N \bra \phi, \td f_j \cet_{L^2} \td f_j
\end{equation*}
from $L^2(\T)$ to $PC(n)\subset L^2(\T)$. One notices that 
\begin{equation*}
  \td S_n \td^{-1} \phi = \td \sum_{j=1}^N \bra \td^{-1} \phi, f_j \cet_{\td} f_j = T_n \phi
\end{equation*}
for any $\phi\in L^2(\T)$. The projection $T_n$ is self-adjoint on $L^2(\T)$ and hence
we also have equality $T_n \phi = (\td')^{-1} S_n' \td' \phi$ for any $\phi\in L^2(\T)$.
Let us next show an auxiliary lemma about the convergence of the projections $S_n$.
\begin{lemma}
  \label{uniformproj}
  For the orthogonal projection $S_n$ defined in equation \eqref{def:sn}
  it holds that 
  \begin{equation*}
    \lim_{n\to\infty} \norm{I-S_n}_{\Ll(H^1,H^t)} = 0
  \end{equation*}
  for any $t<\frac 12$.
\end{lemma}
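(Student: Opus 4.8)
The plan is to deduce the result by interpolating, on the fixed domain $H^1(\T)$, two bounds for the operator $I-S_n$: a uniform bound with values in $H^1(\T)$ and a bound with values in $L^2(\T)$ that decays like $2^{-n}$. The $H^1$ bound is immediate. By construction $S_n$ is the orthogonal projection onto $PL(n)$ for the inner product $\bra\cdot,\cdot\cet_\td$, and since $\td:H^1(\T)\to L^2(\T)$ is invertible the norm $\norm{\cdot}_\td$ is equivalent to $\norm{\cdot}_{H^1}$. As an orthogonal projection satisfies $\norm{I-S_n}_{\Ll(H^1_\td,H^1_\td)}\le 1$, the norm equivalence yields
\begin{equation*}
  \norm{I-S_n}_{\Ll(H^1,H^1)}\le C
\end{equation*}
with $C$ independent of $n$.

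The heart of the argument is the estimate $\norm{(I-S_n)f}_{L^2}\le C\,2^{-n}\norm{f}_{H^1}$, which I would prove by an Aubin--Nitsche duality argument. First I would identify the operator associated with $\bra\cdot,\cdot\cet_\td$, namely $\td^*\td$ with $L^2$-adjoint $\td^*=-D+\ep^q P$; using $DP=PD=0$ on $\T$ and $P^2=P$ this gives $\td^*\td=-\Delta+\ep^{2q}P$, a positive self-adjoint second-order elliptic operator whose inverse maps $L^2(\T)$ into $H^2(\T)$ boundedly. Writing $w=(I-S_n)f$ and letting $\phi\in H^2(\T)$ solve $\td^*\td\,\phi=w$, so that $\bra\phi,z\cet_\td=\bra w,z\cet_{L^2}$ for all $z\in H^1(\T)$ and $\norm{\phi}_{H^2}\le C\norm{w}_{L^2}$, the self-adjointness and idempotency of $S_n$ in $\bra\cdot,\cdot\cet_\td$ give
\begin{equation*}
  \norm{w}_{L^2}^2=\bra\phi,(I-S_n)f\cet_\td=\bra(I-S_n)\phi,(I-S_n)f\cet_\td\le\norm{(I-S_n)\phi}_\td\,\norm{(I-S_n)f}_\td.
\end{equation*}
By the best-approximation property of the orthogonal projection, the norm equivalence, and the standard piecewise-linear interpolation estimate $\norm{\phi-\Pi_n\phi}_{H^1}\le C2^{-n}\norm{\phi}_{H^2}$ (with $\Pi_n$ the nodal interpolant onto $PL(n)$ on the mesh of size $2^{-n}$), I would bound $\norm{(I-S_n)\phi}_\td\le C\norm{\phi-\Pi_n\phi}_{H^1}\le C2^{-n}\norm{\phi}_{H^2}$, while $\norm{(I-S_n)f}_\td\le C\norm{f}_{H^1}$. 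Combining these and cancelling one factor of $\norm{w}_{L^2}$ yields the claimed $L^2$ bound.

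Finally, since $H^t(\T)=[L^2(\T),H^1(\T)]_t$ for $0\le t\le1$, interpolating the target space in the two bounds above (see e.g. \cite[Prop. 13.6.2]{T3}, \cite{BL}, \cite{Tr}) gives
\begin{equation*}
  \norm{I-S_n}_{\Ll(H^1,H^t)}\le C\,\norm{I-S_n}_{\Ll(H^1,L^2)}^{1-t}\norm{I-S_n}_{\Ll(H^1,H^1)}^{t}\le C\,2^{-n(1-t)},
\end{equation*}
which tends to $0$ as $n\to\infty$ for every $t<1$, in particular for $t<\tfrac12$ as claimed; the case $t\le0$ is immediate from the $L^2$ bound since there $\norm{\cdot}_{H^t}\le\norm{\cdot}_{L^2}$. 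The main obstacle is the duality step: one must verify that $\td^*\td=-\Delta+\ep^{2q}P:H^2(\T)\to L^2(\T)$ is boundedly invertible so that $\phi\in H^2(\T)$, and justify the interpolation-error estimate for the nodal interpolant on the dyadic mesh. The remaining ingredients, the uniform $H^1$ bound and the interpolation of operator norms, are routine.
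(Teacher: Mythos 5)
Your proof is correct, but it takes a genuinely different route from the paper's. The paper argues abstractly from the basis $\{f_j\}$: since $\{\td f_j\}$ is an $L^2$-orthonormal basis, $\sum_j\norm{f_j}_{H^t}^2$ is comparable to $\trace_{L^2}((\td\td')^{t-1})$, which is finite precisely when $t<\tfrac12$ (this is where the hypothesis $t<\tfrac12$ enters), and the operator norm $\norm{I-S_n}_{\Ll(H^1,H^t)}^2$ is then controlled by the tail $\sum_{k>N}\norm{f_k}_{H^t}^2$, which tends to zero but carries no explicit rate. You instead exploit the concrete piecewise-linear structure of $PL(n)$: the identification $\td^*\td=-\Delta+\ep^{2q}P$ (your computation $DP=PD=0$ and $P^2=P$ is right, and the $H^2$ regularity is immediate on the torus via Fourier multipliers), the Aubin--Nitsche duality trick, and the nodal interpolation estimate on the dyadic mesh. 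This buys you the quantitative bound $\norm{I-S_n}_{\Ll(H^1,H^t)}\preceq 2^{-n(1-t)}$ and validity on the whole range $t<1$, strictly more than the lemma asserts; the price is that your argument is tied to the finite element spaces and to elliptic regularity for $\td^*\td$, whereas the paper's trace argument would apply verbatim to any nested sequence of subspaces spanned by an orthonormal basis adapted to $\bra\cdot,\cdot\cet_{\td}$. The steps you flag as needing verification (bounded invertibility of $\td^*\td:H^2(\T)\to L^2(\T)$, the interpolant estimate, and the interpolation of the target spaces $H^t(\T)=[L^2(\T),H^1(\T)]_t$) are all standard and go through.
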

\begin{proof}
Let $t<1/2$ and notice that $(\td \td')^{t-1}$
is a trace class operator in $L^2(\T)$.
Since trace is invariant with respect to the basis and norms $\norm{\cdot}_{H^t}$ and
$\norm{\td^t \cdot}_{L^2}$ are equivalent, we have that
\begin{equation*}
  \sum_{j\in\N} \norm{f_j}^2_{H^t} \preceq \sum_{j\in\N} \norm{\td^t f_j}^2_{L^2}
  = \sum_{j\in\N} \bra \td f_j, (\td \td')^{t-1} \td f_j\cet_{L^2} < \infty
\end{equation*}
since functions $\{\td f_j\}_{j=1}^\infty$ are an orthonormal basis in $L^2(\T)$.
Let $\delta>0$ and choose $N$ so that
\begin{equation*}
  \sum_{j>N} \norm{f_j}^2_{H^t} < \delta.
\end{equation*}
Obviously the functions $\{\td' f_j\}_{j=1}^\infty$ also form 
an orthonormal basis for $L^2(\T)$ and we can write for each $f\in H^1(\T)$
\begin{eqnarray*}
  \norm{f-S_n f}^2_{H^t} & \leq & C\sum_{j=1}^\infty \bra \td^t (f-S_n f), \td' f_j\cet^2_{L^2} \\
  & = & C\sum_{j=1}^\infty \bra \td f, (\td')^{-1} (I-S_n)' \td' (\td')^{t} f_j\cet^2_{L^2} \\
  & \leq & C\sum_{j=1}^\infty \norm{\td f}^2_{L^2} \norm{(I-T_n) (\td')^tf_j}^2_{L^2}
\end{eqnarray*}
since $(\td')^t f_j \in L^2(\T)$.
Hence we can estimate the sum as follows
\begin{eqnarray*}
  \norm{f-S_n f}^2_{H^t} & \leq &
  C\l(\sum_{j=1}^\infty \norm{\sum_{k>n} 
    \bra (\td')^t f_j, \td f_k\cet_{L^2} \td f_k}_{L^2}^2\r) \norm{\td f}^2_{L^2}\\
  & = & C\l(\sum_{j=1}^\infty \sum_{k>n} \bra (\td')^t f_j, \td f_k\cet_{L^2}^2 \r) \norm{\td f}^2_{L^2} \\
  & = & C\l(\sum_{k>n} \norm{(\td')^t f_k}^2_{L^2} \r) \norm{\td f}^2_{L^2} \\
  & \leq & C\delta \norm{f}_{H^1}^2
\end{eqnarray*}
when $n>N$.
\end{proof}

Before defining $U_n$ let us still introduce one more notation. Let $\Lambda_n$ be the 
multiplication operator
\begin{equation*}
  \Lambda_n(v) f = (\epsilon^2+(Q_n v)^2)^{-1} f
\end{equation*}
for any $v\in L^2(\T)$ and $f\in L^2(\T)$
where  
\begin{equation*}
  Q_n v = N \sum_{j=1}^N \int_{K^N_j} v(x) dx \cdot \onef_{K^N_j}
\end{equation*}
and $\onef_{K^N_j}$ is the indicator function of the set $K^N_j = [(j-1)/N,j/N)$.
\begin{definition}
\label{def:un}
Let $U_n:\Omega \to L^2(\T)$ be the random variable 
\begin{equation}
  U_n(\omega_1,\omega_2) = \sum_{j=1}^N \bU^N_j(\omega_1, \omega_2) f_j
\end{equation}
where the random vector $\bU^N(\omega) = (\bU^N_j(\omega))_{j=1}^N \in \R^N$ 
is given the following structure: Denote by $\omega_2\mapsto {\bf C}(\omega_2) \in \R^{N\times N}$ 
a random matrix such that
\begin{equation*}
  {\bf C}_{jk}(\omega_2) = \bra \Lambda_n(V_n(\omega_2)) \td f_j, \td f_k\cet_{L^2}.
\end{equation*}
Due to the positive definiteness of ${\bf C}$ we can define
\begin{equation*}
  \bU^N(\omega) = {\bf C}(\omega_2)^{\frac 12} {\bf W}_N(\omega_1)
\end{equation*}
where ${\bf W}_N : \Omega_1 \to \R^N$ is centered Gaussian random variable with identity covariance matrix.
\end{definition}
The measurability of $\bU^N:\Omega\to \R^N$ is a consequence of the mapping $\omega_2\mapsto V_n(\omega_2)$
being measurable.
Also it follows from Definition \ref{def:un} that with fixed $\omega_2$ the probability distribution of $\omega_1\mapsto U_n(\omega_1,\omega_2)$ is centered Gaussian with
covariance operator
\begin{equation*}
  C_{U_n}(V_n(\omega_2)) = S_n \td^{-1} \Lambda_n (V_n(\omega_2)) (\td')^{-1} (S_n)'|_{L^2(\T)}.
\end{equation*}
This can be seen from the short computation
\begin{eqnarray*}
  \bra C_{U_n} \phi, \psi\cet_{L^2} & = &
  \sum_{j=1}^N \sum_{k=1}^N {\bf C}_{jk} \bra f_j,\phi\cet_{L^2} \bra f_k, \psi\cet_{L^2} \\
  & = & \left\langle \Lambda_n(V_n(\omega_2)) \l(\sum_{j=1}^N \bra f_j,\phi\cet_{L^2} \td f_j\r),
  \sum_{k=1}^N \bra f_k,\psi\cet_{L^2} \td f_k\right\rangle_{L^2} \\
  & = & \bra \Lambda_n(V_n(\omega_2)) (\td')^{-1} (S_n)' \phi, (\td')^{-1} (S_n)' \psi \cet_{L^2}
\end{eqnarray*}
for all $\psi,\phi\in L^2(\T)$.
Denote the distribution of $U_n(\cdot,\omega_2)$ on $L^2(\T)$ by $\lambda^{V_n(\omega_2)}_n$
and the joint distribution of $(U_n,V_n)$ on $L^2(\T)\times L^2(\T)$ by $\lambda_n$.

\subsection{Prior density}

Let us show in this subsection how the prior density function of the random variable $(U_n,V_n)$ can be written down explicitly.
Consider mappings ${\mathcal I}_n, {\mathcal J}_n : PL(n) \to \R^N$ such that
\begin{equation*}
  {\mathcal I}_n \l(\sum_{j=1}^N {\bf x}_j f_j\r) = {\bf x} \quad {\rm and} \quad
  {\mathcal J}_n \l(\sum_{j=1}^N {\bf x}_j g_j\r) = {\bf x}.
\end{equation*}
for any ${\bf x} = ({\bf x}_1, ..., {\bf x}_N)^T\in \R^N$. Use the following notation for the density functions:
let $\Pi_{(\bU_n,\bV_n)}$, $\Pi_{\bV_n}$ and $\Pi_{\bU_n|\bV_n}(\cdot \;|\; {\mathcal J}_nv)$ 
denote the densities of the probability measures $\lambda_n \circ ({\mathcal I}_n^{-1}, {\mathcal J}_n^{-1})$ on $\R^{2N}$ and
$\nu_n \circ {\mathcal J}^{-1}_n$ and $\lambda_n^{v}\circ {\mathcal I}^{-1}_n$ on $\R^{N}$, respectively, with any $v\in PL(n)$. Below $\phi \propto \psi$ denotes relation $\phi \equiv c \psi$ with 
some constant $c$.

\begin{theorem}
  \label{vndensity}
  Let $v\in PL(n)$ be arbitrary and $\bv = {\mathcal J}_n v \in \R^N$. Then
  \begin{equation}
    \Pi_{\bV^N}(\bv) \propto
    \exp\l(-\frac 12 \l(\ep\norm{Dv}^2_{L^2}+
      \frac{1}{4\ep}\norm{v-\onef}^2_{L^2}\r)\r)
  \end{equation}  
  with $\onef(x) = 1$ for all $x\in \T$.
\end{theorem}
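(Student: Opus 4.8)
The plan is to compute the law of the coefficient vector $\mathcal{J}_n V_n$ explicitly in $\R^N$ and then translate the resulting Gaussian density back into the function-space expression using the orthonormality of $\{g_j\}$ with respect to $\bra\cdot,\cdot\cet_{H(\nu)}$. First I would recall from Definition \ref{def:vn} that $V_n = \sum_{j=1}^N \bV^N_j g_j + \onef$, where the $\bV^N_j$ are independent standard normal variables. Since $\onef\in PL(n)$, it has an expansion $\onef = \sum_{j=1}^N (\mathcal{J}_n\onef)_j\, g_j$, so the coefficient vector of $V_n$ is $\mathcal{J}_n V_n = \bV^N + \mathcal{J}_n\onef$. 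Hence $\mathcal{J}_n V_n$ is a Gaussian random vector in $\R^N$ with mean $\mathcal{J}_n\onef$ and identity covariance, and the density of $\nu_n\circ\mathcal{J}_n^{-1}$ is
\[
  \Pi_{\bV^N}(\bv) = (2\pi)^{-N/2}\exp\left(-\tfrac12\,\abs{\bv-\mathcal{J}_n\onef}^2\right).
\]

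Next I would rewrite the Euclidean norm in the exponent. Writing $v=\sum_{j=1}^N \bv_j g_j$, so that $v-\onef = \sum_{j=1}^N\bigl(\bv_j-(\mathcal{J}_n\onef)_j\bigr)g_j$, and using that $\{g_j\}$ is orthonormal for $\bra\cdot,\cdot\cet_{H(\nu)}$, I obtain $\abs{\bv-\mathcal{J}_n\onef}^2 = \norm{v-\onef}^2_{H(\nu)}$. Finally, applying the explicit Cameron--Martin norm formula \eqref{cm_norms} together with $D\onef=0$ (so that $D(v-\onef)=Dv$) gives
\[
  \norm{v-\onef}^2_{H(\nu)} = \ep\,\norm{Dv}^2_{L^2}+\frac{1}{4\ep}\norm{v-\onef}^2_{L^2},
\]
which yields the claimed proportionality, the normalizing constant $(2\pi)^{-N/2}$ being absorbed into $\propto$.

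There is no serious obstacle here; the only point requiring genuine care is the centering. Because $V_n$ is centered at $\onef$ rather than at $0$, the density as a function of $\bv$ is a \emph{shifted} Gaussian, and one must check that the shift by the coordinates $\mathcal{J}_n\onef$ reproduces exactly the factor $v-\onef$ in the $L^2$-term while leaving the derivative contribution as $\norm{Dv}^2_{L^2}$. The structural fact that carries the argument is the orthonormality of $\{g_j\}$ in the Cameron--Martin inner product, which is precisely what converts the coordinatewise Euclidean norm of the coefficient vector into the intended weighted Sobolev expression.
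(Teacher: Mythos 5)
Your proposal is correct and follows essentially the same route as the paper: identify $\mathcal{J}_n V_n$ as a standard Gaussian vector shifted by $\mathcal{J}_n\onef$, use the fact that $\mathcal{J}_n$ is an isometry from $PL(n)\subset H(\nu)$ to $\R^N$ (i.e.\ the orthonormality of $\{g_j\}$ in the Cameron--Martin inner product) to convert the Euclidean norm into $\norm{v-\onef}_{H(\nu)}$, and then apply formula \eqref{cm_norms}. Your version merely spells out the centering step and the observation $D\onef=0$ more explicitly than the paper does.
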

\begin{proof}
We recall that by definition $\bV^N_j$ are independent standard Gaussian random 
variables for all $1\leq j \leq N$. It is easy to see that
\begin{equation*}
  \norm{\bv-{\mathcal J}_n\onef}_{\R^N} = 
  \norm{{\mathcal J}_n(v-\onef)}_{R^N} =
  \norm{v-\onef}_{H(\nu)}
\end{equation*}
since ${\mathcal J}_n$ is an isometry between $PL(n)\subset H(\nu)$ and $\R^N$.
By equation \eqref{cm_norms} we now obtain the claim.
\end{proof}

\begin{theorem}
  \label{undensity}
  Let $u, v\in PL(n)$ be arbitrary and $\bu={\mathcal I}_n u, \bv = {\mathcal J}_n v \in \R^N$. Then
  it holds that
  \begin{equation*}
    \Pi_{\bU_n | \bV_n}(\bu\; | \; \bv) \propto
    \exp \l(-\frac 12 \l(\int_\T -N\log(\ep^2+(Q_n v)^2) + (\ep^2+(Q_n v)^2)|\td u|^2 dx\r)\r).
  \end{equation*}
\end{theorem}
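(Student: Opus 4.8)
The plan is to read off the conditional law of $\bU_n$ given $V_n=v$ directly from Definition \ref{def:un} and then to evaluate the two ingredients of a finite-dimensional Gaussian density—the inverse-covariance quadratic form and the normalizing determinant—by recognizing both as quantities attached to a multiplication operator on the space $PC(n)$ of piecewise constant functions.

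First I would recall that, conditioned on $\omega_2$ with $V_n(\omega_2)=v$, the vector $\bU^N={\bf C}(\omega_2)^{1/2}{\bf W}_N$ is centered Gaussian on $\R^N$ with covariance matrix ${\bf C}$, where ${\bf C}_{jk}=\bra \Lambda_n(v)\td f_j,\td f_k\cet_{L^2}$. Hence $\Pi_{\bU_n|\bV_n}(\bu\,|\,\bv)=(2\pi)^{-N/2}(\det{\bf C})^{-1/2}\exp(-\tfrac12\bu^T{\bf C}^{-1}\bu)$, and the whole task reduces to computing the quadratic form and the determinant in terms of $v$. The key observation is that, writing $a=\ep^2+(Q_n v)^2$, the function $a$ is piecewise constant on the mesh $\{K^N_j\}$, so multiplication by $a$ and by $a^{-1}$ preserves $PC(n)$; consequently ${\bf C}$ is exactly the matrix, in the orthonormal basis $\{\td f_j\}_{j=1}^N$ of $PC(n)$, of the self-adjoint operator $\Lambda_n(v)|_{PC(n)}$ (multiplication by $a^{-1}$), whose inverse on $PC(n)$ is multiplication by $a$.

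For the quadratic form I would use that $\bu$ is precisely the coordinate vector of $\td u=\sum_j\bu_j\td f_j$ in the basis $\{\td f_j\}$, so that ${\bf C}^{-1}$ represents multiplication by $a$ and $\bu^T{\bf C}^{-1}\bu=\bra a\,\td u,\td u\cet_{L^2}=\int_\T(\ep^2+(Q_n v)^2)|\td u|^2\,dx$. For the determinant I would diagonalize $\Lambda_n(v)|_{PC(n)}$ in the orthogonal basis of indicator functions $\{\onef_{K^N_j}\}$: multiplication by $a^{-1}$ acts as the scalar $a_j^{-1}:=(\ep^2+(Q_n v)_j^2)^{-1}$ on $K^N_j$, where $(Q_n v)_j$ is the constant value of $Q_n v$ there, so the eigenvalues are $a_j^{-1}$ and $\det{\bf C}=\prod_{j=1}^N a_j^{-1}$ (the determinant being basis-independent). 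Thus $(\det{\bf C})^{-1/2}=\exp(\tfrac12\sum_{j=1}^N\log(\ep^2+(Q_n v)_j^2))$, and since each $K^N_j$ has length $1/N$ and $Q_n v$ is constant on it, this sum equals $\int_\T N\log(\ep^2+(Q_n v)^2)\,dx$. Substituting both expressions into the Gaussian density and absorbing the $v$-independent factor $(2\pi)^{-N/2}$ into the proportionality constant yields the claimed formula.

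The computation is essentially routine once the right viewpoint is fixed; the main obstacle is handling the change of basis cleanly, namely justifying that the covariance matrix written in the orthonormal basis $\{\td f_j\}$ represents multiplication by $a^{-1}$ on $PC(n)$ (so that its determinant may be evaluated in the more convenient indicator-function basis), together with keeping track of the fact that the proportionality constant must be independent of both $\bu$ and $\bv$, which is exactly what forces the $v$-dependent determinant to reappear as the explicit logarithmic term in the exponent.
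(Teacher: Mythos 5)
Your proposal is correct and follows essentially the same route as the paper's proof: both identify ${\bf C}$ as the matrix of the multiplication operator $\Lambda_n(v)$ acting on $PC(n)$ in the orthonormal basis $\{\td f_j\}$, diagonalize it in the indicator-function basis to get the determinant as $\prod_j(\ep^2+(Q_nv)_j^2)^{-1}$, and evaluate the quadratic form as $\bra(\ep^2+(Q_nv)^2)\td u,\td u\cet_{L^2}$ using that $\bu$ is the coordinate vector of $\td u$. The only cosmetic difference is that the paper writes out the orthogonal change-of-basis matrix ${\bf S}$ and the factorization ${\bf C}={\bf S}^{-1}{\bf L}{\bf S}$ explicitly, whereas you argue at the operator level; the content is identical.
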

\begin{proof}
The density function of a Gaussian random variable in $\R^N$ can be written as
\begin{equation*}
  \Pi_{\bU_n | \bV_n}(\bu\; | \; \bv) \propto \exp \l(-\frac 12 (\log \det {\bf C} + \bra \bu , {\bf C}^{-1} \bu \cet_{\R^N})\r)
\end{equation*}
where the matrix ${\bf C}$ depends on $v$ and its elements satisfy
\begin{equation*}
  {\bf C}_{jk}= \bra \Lambda_n(v) \td f_j, \td f_k\cet_{L^2}
\end{equation*}
for $1\leq j,k\leq N$.
Our challenge is to compute explicitly $\det {\bf C}$ and the inverse matrix ${\bf C}^{-1}$.
Notice first how $\Lambda_n(v)$ maps $PC(n)$ to itself. Inspired by this let us consider
${\bf C}$ as a matrix representation of the linear operator $\Lambda_n(v):PC(n)\to PC(n)$ in the basis $\{\td f_k\}_{k=1}^N$.
Next consider another $L^2$-orthonormal basis for $PC(n)$, namely, 
$\{\sqrt{N} \onef_{K^N_j}\}_{j=1}^N$.
Let the matrix ${\bf S}\in \R^{N\times N}$ be the matrix presentation of the change of the basis
$\{\td f_j\}_{j=1}^N$ to $\{\sqrt{N} \onef_{K^N_j}\}_{j=1}^N$. The components of this matrix are given by the formula
\begin{equation}
  \label{sjk}
  {\bf S}_{jk} = \bra \td f_k, \sqrt{N} \onef_{K^N_j} \cet_{L^2}
\end{equation}
for $1\leq j,k\leq N$.
Moreover, ${\bf S}$ is invertible and satisfies ${\bf S}^{-1} = {\bf S}^T$.

Now the key notion is that 
since $\Lambda_n(v)$ is diagonal in
the basis $\{\sqrt{N} \onef_{K^N_j}\}_{j=1}^N$, we can factorize matrix ${\bf C}$ as
\begin{equation*}
  {\bf C} = {\bf S}^{-1} {\bf L} {\bf S}
\end{equation*}
where the diagonal matrix ${\bf L}$ is the representation of the multiplication 
operator $\Lambda_n(v)$ in the 
basis $\{\sqrt{N} \onef_{K^N_j}\}_{j=1}^N$.
One can show that the diagonal of the matrix ${\bf L}$ consists of elements $\bra (\epsilon^2+(Q_n v)^2)^{-1}, N \onef_{K_j^N}\cet_{L^2}$ for $1\leq j \leq N$.
This immediately yields that
\begin{equation}
  \det {\bf C} = \det {\bf L} = \prod_{j=1}^N \bra (\epsilon^2+(Q_n v)^2)^{-1}, N\onef_{K_j^N}\cet_{L^2}.
\end{equation}
Now we have
\begin{equation*}
  -\log \det {\bf C} = \sum_{j=1}^N -\log \bra (\epsilon^2+(Q_n v)^2)^{-1}, N\onef_{K_j^N}\cet_{L^2}  
  = \int_\T N\log(\epsilon^2+ (Q_nv)^2) dx,
\end{equation*}
which yields the first part of the density function. Furthermore, a simple computation yields
\begin{equation*}
  \bra \bu, {\bf C}^{-1} \bu\cet_{\R^N}
  = \bra {\bf S} \bu, {\bf L}^{-1} {\bf S} \bu\cet_{\R^N}
  = \sum_{j=1}^N \bra \epsilon^2+(Q_n v)^2, N \onef_{K_j^N}\cet_{L^2} ({\bf S} \bu)^2_j.
\end{equation*}
Assume that $u = \sum_{k=1}^N {\bf u}_k f_k$ and ${\bf u} = ({\bf u}_1,...,{\bf u}_N)^T\in \R^N$. 
Then by the equation \eqref{sjk} it holds that
\begin{equation*}
  ({\bf S} \bu)_j = \sum_{k=1}^N {\bf u}_k \bra \td f_k, \sqrt{N} \onef_{K_j^N}\cet_{L^2}
  = \bra \td u, \sqrt{N} \onef_{K_j^N}\cet_{L^2}
\end{equation*}
and finally
\begin{eqnarray*}
  \bra \bu, {\bf C}^{-1} \bu\cet_{\R^N}
  & = & \sum_{j=1}^N \frac 1N \bra \epsilon^2+(Q_n v)^2, N\onef_{K_j^N}\cet_{L^2}
  \bra \td u, N\onef_{K_j^N}\cet_{L^2}^2 \\
  & = & \int_\T (\epsilon^2+(Q_n v)^2) \abs{\td u}^2 dx,
\end{eqnarray*}
which proves the statement.
\end{proof}

We conclude this section by pointing out that 
\begin{equation}
  \label{priordistribution}
  \Pi_{(\bU_n,\bV_n)}(\bu,\bv) = \Pi_{\bV_n}(\bv) \cdot \Pi_{\bU_n|\bV_n}(\bu \;|\; \bv)
\end{equation}
for any $\bu,\bv\in \R^N$. In consequence, the joint density is obtained from lemmata \ref{vndensity}
and \ref{undensity}.

 
\section{Convergence of the CM estimates}

Two previous sections were devoted for the construction of the prior distributions. This is however
only halfway in our search for a scalable reconstruction method. In order to 
show the convergence of conditional mean estimates one also has to consider the 
interplay between likelihoods, prior distributions and the measurement equation. 
We turn our attention to this in the following.

\subsection{General conditions}

Some general conditions 
under which reconstructors converge were given in \cite{LSS}.
We generalize these conditions in Theorem \ref{LSS_result}.
The essential difference is that the finite-dimensional priors are not given by linear projections.
Note that here we consider now a general prior random variable $U:\Omega\to H$ with a real separable Hilbert space $H$.
Let us first prove a version of the Vitali convergence theorem for probability measures satisfying
Definition \ref{unifprior}.
\begin{lemma}
  \label{domconv}
  Assume that $\mu_n$ and $\mu$ are uniformly discretized probability measures on $H$.
  Suppose that $f : H \to \R$ is continuous and $0\leq f(u) \leq \exp(b\norm{u}_H)$ 
  for some constant $b>0$.
  Then we have the convergence
  \begin{equation}
    \lim_{n\to\infty} \int_H f(u) d\mu_n(u) = \int_H f(u) d\mu(u).
  \end{equation}
\end{lemma}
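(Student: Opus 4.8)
The plan is to combine the two halves of Definition \ref{unifprior}: the weak convergence in (i), which only governs bounded continuous test functions, and the uniform exponential moment bound in (ii), which controls the region where $f$ is large. The obstacle is precisely that $f$ is unbounded, so weak convergence does not apply directly; I would circumvent this by a truncation argument in the spirit of the Vitali convergence theorem, the subtle point being the order of quantifiers.

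First I would fix $R>1$ and set $f_R = \min(f,R)$, which is bounded and continuous. Definition \ref{unifprior}(i) then yields
\begin{equation*}
  \lim_{n\to\infty} \int_H f_R(u) \, d\mu_n(u) = \int_H f_R(u) \, d\mu(u).
\end{equation*}
Next I would estimate the truncation error uniformly in $n$. Since $f-f_R = \max(f-R,0)$ is supported on the set $\{f>R\}$ and there satisfies $0\leq f-f_R \leq f \leq \exp(b\norm{u}_H)$, and since on that same set $\exp(b\norm{u}_H)\geq f > R$ forces $\exp(-b\norm{u}_H)\leq 1/R$, I obtain
\begin{equation*}
  \int_H (f-f_R)\, d\mu_n \leq \int_{\{f>R\}} \exp(b\norm{u}_H)\, d\mu_n
  \leq \frac 1R \int_H \exp(2b\norm{u}_H)\, d\mu_n \leq \frac{C(2b)}{R},
\end{equation*}
with the identical bound holding for $\mu$. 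The essential device here is invoking Definition \ref{unifprior}(ii) with the \emph{doubled} exponent $2b$, which converts the smallness of $\exp(-b\norm{u}_H)$ on the tail into a bound that is uniform over all $n$.

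Finally I would assemble the estimate through the triangle inequality
\begin{equation*}
  \left| \int_H f\, d\mu_n - \int_H f\, d\mu \right|
  \leq \int_H (f-f_R)\, d\mu_n
  + \left| \int_H f_R\, d\mu_n - \int_H f_R\, d\mu \right|
  + \int_H (f-f_R)\, d\mu.
\end{equation*}
Given $\delta>0$, I would first choose $R$ so large that $C(2b)/R < \delta/3$, which controls the first and third terms simultaneously for every $n$; then, with $R$ now fixed, the middle term tends to zero by the first step and so falls below $\delta/3$ for all large $n$. Hence the left-hand side is eventually below $\delta$, proving the claim. The one point requiring care is exactly this order of selection — the cutoff level $R$ must be fixed before $n$ is sent to infinity — and the uniformity built into Definition \ref{unifprior}(ii) is what makes this legitimate.
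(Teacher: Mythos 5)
Your proof is correct and follows essentially the same route as the paper: truncate $f$ at level $R$, apply weak convergence to the bounded continuous truncation, and control the tail uniformly in $n$ by invoking the exponential moment bound with the doubled exponent $2b$. The only cosmetic difference is in the tail estimate — the paper first bounds $\mu(\{f>j\})\leq C(b)/j$ and then applies the Cauchy--Schwarz inequality to get a bound of order $j^{-1/2}$, whereas you bound the integrand pointwise by $\exp(2b\norm{u}_H)/R$ on $\{f>R\}$, which is marginally more direct and gives the slightly sharper rate $C(2b)/R$.
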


\begin{proof}
Let us first denote ${\mathcal B}_j = \{u\in H \; | \; f(u) > j\}$ and $f_j(u) := \min(f(u),j)$ for any
$u\in H$.
We get an upper bound for the probability of ${\mathcal B}_j$ by
\begin{equation}
  \label{bj_size_est}
  \mu({\mathcal B}_j) \leq \frac 1 j \int_{{\mathcal B}_j} f(u) d\mu(u) \leq 
  \frac 1 j \int_H \exp(b\norm{u}) d\mu(u) \leq \frac{C(b)}{j},
\end{equation}
where $C(b)$ is given in Definition \ref{unifprior}.
Notice how the exactly same bound applies also for $\mu_n({\mathcal B}_j)$.
From equation \eqref{bj_size_est} we can deduce
\begin{eqnarray*}
 \int_H \abs{f-f_j} d\mu & = & \int_{{\mathcal B}_j} \abs{f-f_j}d \mu \\ 
  & \leq & 2 \int_{{\mathcal B}_j} \exp(b\norm{u}_H) d\mu(u) \\
  & \leq & 2 \sqrt{C(2b)} \sqrt{\mu({\mathcal B}_j)} \\
  & \leq & \frac{\widetilde{C}(b)}{\sqrt{j}}
\end{eqnarray*}
where $\widetilde{C}(b) = 2 \sqrt{C(2b)} \sqrt{C(b)}$.
Again the same procedure applies for $\mu_n$ yielding the same upper bound. Notice carefully 
that the bound does not depend on $n$. Now the result follows by approximating
\begin{equation*}
  \abs{\int_H f d\mu_n-\int_H f d\mu} \leq \int_H \abs{f-f_j} d\mu + 
  \int_H \abs{f-f_j} d\mu_n + \abs{\int_H f_j(d\mu-d\mu_n)}
\end{equation*}
and using the weak convergence. Namely, for each $\delta>0$ we can choose $j$ so that
we have $\widetilde{C}(b) / \sqrt{j} \leq \delta/3$. On the other hand for each $j$ there exists $n'$ so
that $\abs{\int_H f_j(d\mu-d\mu_n)} < \delta /3$ for each $n>n'$. This results to
\begin{equation*}
   \abs{\int_H f d\mu_n-\int_H f d\mu} < \delta
\end{equation*}
when $n>n'$.
\end{proof}

Combining Lemma \ref{unifprior} and the formula \eqref{recon_formula} we can now prove Theorem \ref{LSS_result}.
\vspace{0.5cm}

{\em Proof of Theorem \ref{LSS_result}.} 
First, let us consider another measurement model
\begin{equation}
  \Theta_{kn} = A_k U_n + \Ec,
\end{equation}
where the noise is not discretized and is now infinite-dimensional. The reconstructor formula can be used for this equation
giving
\begin{equation}
  {\mathcal R}_{\Theta_{kn}} (g(U_n) | m_k) =
  \frac{\int_H g(u) \Xi(u,m_k) d\lambda_n(u)}{\int_H \Xi(u,m_k) d\lambda_n(u)}
\end{equation}
with
\begin{equation}
  \Xi(u,m_k) = \exp(-\frac 12 \norm{A_ku}^2_{L^2}+ \bra C_\Ec^{-1} A_ku, m_k\cet_{H^{-1}}) \leq \exp(b\norm{u}_H)
\end{equation}
with some $b>0$.
Now Lemma \ref{domconv} yields
\begin{equation}
  \lim_{k,n\to\infty} {\mathcal R}_{\Theta_{kn}} (g(U_n) | m_k)
  = {\mathcal R}_{M}(g(U) | m).
\end{equation}
The claim follows from \cite[Lemma 1]{LSS}.
\qed

\subsection{Weak convergence of the prior distribution}

The Proposition 3.8.12. in  \cite{B} yields the weak convergence of measures $\nu_n$.
\begin{lemma}
  The probability distributions $\nu_n$ converge weakly to $\nu$ 
  on $L^2(\T)$.
\end{lemma}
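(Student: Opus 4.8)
The plan is to prove weak convergence of the centered Gaussian measures $\nu_n$ by combining pointwise convergence of their characteristic functionals with uniform tightness of the family $\{\nu_n\}$, and then concluding via Prokhorov's theorem. Since every $\nu_n$ and $\nu$ is Gaussian with the same mean $\onef$, the mean contribution is automatic and only the covariance operators and a tail estimate require work. (One may instead quote \cite[Prop.\ 3.8.12]{B} directly, as indicated, but the direct argument is transparent here.)

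For the characteristic functionals I would use $\hat{\nu}_n(\phi) = \exp(i\bra\onef,\phi\cet_{L^2} - \tfrac12\bra C_{V_n}\phi,\phi\cet_{L^2})$ and the analogous formula for $\nu$. From the computation of $C_{V_n}$ carried out above we already have $\bra C_{V_n}\phi,\phi\cet_{L^2} = \bra R_n C_V\phi,\phi\cet_{L^2}$ for every $\phi\in L^2(\T)$. Because the dyadic piecewise linear spaces $PL(n)$ increase to a dense subspace of $H^1(\T)=H(\nu)$, the $H(\nu)$-orthogonal projections $R_n$ converge strongly to the identity on $H(\nu)$. As $C_V=(\tfrac{1}{4\ep}I-\ep\Delta)^{-1}$ maps $L^2(\T)$ into $H^1(\T)$, it follows that $R_n C_V\phi\to C_V\phi$ in $H^1(\T)$, hence in $L^2(\T)$, so $\bra C_{V_n}\phi,\phi\cet_{L^2}\to\bra C_V\phi,\phi\cet_{L^2}$ and therefore $\hat{\nu}_n(\phi)\to\hat{\nu}(\phi)$ for each $\phi$.

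For tightness I would fix $t\in(0,\tfrac12)$ and exploit the compact embedding $H^t(\T)\hookrightarrow L^2(\T)$. Since $\{g_j\}$ is orthonormal in $H(\nu)$ and the $\bV^N_j$ are i.i.d.\ standard normal, a direct second-moment computation gives $\expec\norm{V_n-\onef}_{H^t}^2=\sum_{j=1}^N\norm{g_j}_{H^t}^2$. Introducing the positive self-adjoint operator $T$ on $H(\nu)$ determined by $\bra Tf,g\cet_{H(\nu)}=\bra f,g\cet_{H^t}$, this is a partial trace, so by nonnegativity of its terms it is bounded, uniformly in $n$, by $\trace_{H(\nu)}(T)$. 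Evaluating the trace in the Fourier basis identifies $T=C_V(I-\Delta)^t$ and yields $\trace_{H(\nu)}(T)=\sum_k(1+(2\pi k)^2)^t\,(\tfrac{1}{4\ep}+\ep(2\pi k)^2)^{-1}<\infty$ precisely because $t<\tfrac12$. Markov's inequality then gives $\nu_n(\{\norm{v-\onef}_{H^t}>R\})\le C/R^2$ with $C$ independent of $n$, and since $H^t$-balls are relatively compact in $L^2(\T)$ this delivers uniform tightness.

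Finally, by Prokhorov the family $\{\nu_n\}$ is relatively weakly compact; every weak limit point is a measure whose characteristic functional equals $\hat{\nu}$ by the second step, hence coincides with $\nu$, so the whole sequence converges weakly to $\nu$. I expect the main obstacle to be the uniform trace bound: one must check that the partial sums $\sum_{j=1}^N\norm{g_j}_{H^t}^2$ stay bounded independently of the basis $\{g_j\}$ (which changes with $n$), and this rests on the basis-independence of the trace together with the sharp summability threshold $t<\tfrac12$ afforded by the one-dimensional domain $\T$.
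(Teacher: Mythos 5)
Your argument is correct, but it takes a different route from the paper, which disposes of this lemma in one line by citing Proposition 3.8.12 of Bogachev's \emph{Gaussian measures}: that proposition states precisely that if $\{g_j\}$ is an orthonormal basis of the Cameron--Martin space of a Gaussian measure, then the laws of the partial sums $\sum_{j\le N}\xi_j g_j$ (plus the mean) converge weakly to the measure itself, which is exactly the situation of Definition \ref{def:vn}. What you have done is essentially reprove that proposition from scratch in this concrete setting, and every step checks out: the identity $C_{V_n}\phi=R_nC_V\phi$ follows from the paper's computation $C_{V_n}=R_nC_VR_n'$ together with $C_VR_n'=R_nC_V$ and $R_n^2=R_n$; the strong convergence $R_n\to I$ on $H(\nu)$ holds because the nested spaces $PL(n)$ have dense union in $H^1(\T)$; the tightness estimate $\expec\norm{V_n-\onef}_{H^t}^2=\sum_{j=1}^N\norm{g_j}_{H^t}^2\le\trace_{H(\nu)}(C_V(I-\Delta)^t)<\infty$ for $t<\tfrac12$ is exactly the summability computation $\sum_k(1+(2\pi k)^2)^{t-1}<\infty$, and the partial-trace bound is legitimate since the operator is nonnegative (here the basis is in fact a single nested one, so you are just bounding monotone partial sums of a convergent series). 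The trade-off is transparency versus economy: your direct proof makes visible why the one-dimensional domain and the threshold $t<\tfrac12$ matter (the same mechanism as in Lemma \ref{smoothnessprior}), whereas the citation keeps the paper short and delegates the measure-theoretic bookkeeping to a standard reference. An intermediate alternative, closer in spirit to how the paper later handles $\lambda_n^{v_n}$, would have been to verify the three conditions of Lemma \ref{aux_weak} (constant means, $\norm{R_nC_V-C_V}_{\Ll(L^2)}\to0$, and convergence of traces) rather than going through characteristic functionals and Prokhorov.
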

We want to show that with fixed $\omega_2\in\Omega_2$ 
the distribution $\lambda^{V_n(\omega_2)}_n$ converges weakly
to $\lambda^{V(\omega_2)}$. Since $\lambda^{V_n(\omega_2)}_n$ is not obtained with a straight-forward projection as in the case of $\nu_n$
we recall conditions that are needed in the weak convergence of general Gaussian distributions.
The following lemma is proved in \cite{B} as Example 3.8.15.
\begin{lemma}
  \label{aux_weak}
  A sequence of Gaussian measures $\mu_n$ with means $a_n$ and covariance operators $C_n$
  on a separable Hilbert space $H$ converges weakly to a Gaussian measure $\mu$ with mean $a$ 
  and covariance  operator $C$ if and only if the following conditions are satisfied:
  \begin{enumerate}
    \item[(i)] $\lim_{n\to\infty} \norm{a_n-a}_H = 0$,
    \item[(ii)] $\lim_{n\to\infty} \norm{C_n - C}_{{\Ll}(H)} = 0$ and
    \item[(iii)] $\lim_{n\to\infty} \trace_{H}(C_n) = \trace_{H} (C)$.
  \end{enumerate}
\end{lemma}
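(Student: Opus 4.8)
The plan is to reduce weak convergence to two ingredients: pointwise convergence of the characteristic functionals and uniform tightness of the family $\{\mu_n\}$. Recall that the characteristic functional of a Gaussian measure is $\hat\mu_n(\xi)=\exp\bigl(i\bra a_n,\xi\cet_H-\tfrac12\bra C_n\xi,\xi\cet_H\bigr)$, and on a separable Hilbert space a sequence of probability measures converges weakly precisely when its characteristic functionals converge pointwise \emph{and} the sequence is uniformly tight (a Prokhorov/L\'evy continuity argument). The crucial point, and the reason three separate conditions appear, is that in infinite dimensions pointwise convergence of the characteristic functionals alone does \emph{not} force weak convergence; condition (iii) is exactly what supplies the missing tightness.

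For the sufficiency direction I would argue as follows. Conditions (i) and (ii) immediately give $\bra a_n,\xi\cet_H\to\bra a,\xi\cet_H$ and $\bra C_n\xi,\xi\cet_H\to\bra C\xi,\xi\cet_H$ for every $\xi\in H$, hence $\hat\mu_n(\xi)\to\hat\mu(\xi)$ pointwise. The substantive step is tightness. Here I would first upgrade (ii) together with (iii) to convergence in trace norm, $\norm{C_n-C}_\trace\to 0$: for nonnegative trace class operators, operator-norm convergence combined with convergence of the traces yields trace-norm convergence (a standard fact of Gr\"umm type, provable by diagonalising $C$ and splitting the trace into a finite part and a uniformly small tail). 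Using $\int_H\norm{u-a_n}_H^2\,d\mu_n=\trace(C_n)$ and the bound (i) on the means, the trace-norm control lets me produce, for each $\delta>0$, a single compact ellipsoid $K_\delta\subset H$ with $\mu_n(K_\delta)\ge 1-\delta$ for all $n$, which is uniform tightness. Prokhorov's theorem then gives relative compactness, and since every weak limit point must have characteristic functional $\hat\mu$ and hence coincide with $\mu$, the whole sequence converges weakly to $\mu$.

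For necessity I would test against one-dimensional projections: since $u\mapsto\bra u,\xi\cet_H$ is continuous, the push-forwards are real Gaussians $N(\bra a_n,\xi\cet_H,\bra C_n\xi,\xi\cet_H)$ converging weakly, so their means and variances converge, giving $a_n\rightharpoonup a$ weakly and $\bra C_n\xi,\xi\cet_H\to\bra C\xi,\xi\cet_H$ (hence $C_n\to C$ in the weak operator topology by polarisation). To promote these to (i)--(iii) I would use that weak convergence forces tightness, so $\sup_n(\norm{a_n}_H^2+\trace C_n)<\infty$; the Gaussian fourth-moment inequality $\int_H\norm{u-a_n}_H^4\,d\mu_n\preceq(\trace C_n)^2$ then makes $\norm{u}_H^2$ uniformly integrable, whence $\int_H\norm{u}_H^2\,d\mu_n\to\int_H\norm{u}_H^2\,d\mu$, i.e. $\norm{a_n}_H^2+\trace C_n\to\norm{a}_H^2+\trace C$. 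Combined with $a_n\rightharpoonup a$ and lower semicontinuity of the norm this yields $\norm{a_n}_H\to\norm{a}_H$ and hence (i), and it isolates $\trace C_n\to\trace C$, which is (iii). Finally, weak operator convergence together with (iii) delivers trace-norm convergence of the covariances by the same Gr\"umm criterion, and since the operator norm is dominated by the trace norm this gives (ii).

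I expect the main obstacle to be the tightness step in the sufficiency direction: turning the quantitative information in (ii) and (iii) into a genuinely uniform compact set. Everything downstream is formal once trace-norm convergence of the covariances is available, so the real content is the passage from operator-norm-plus-trace convergence to trace-norm convergence, and then to a uniform ellipsoidal tightness estimate.
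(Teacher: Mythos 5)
The paper does not actually prove this lemma: it is quoted verbatim from Bogachev's \emph{Gaussian measures} (Example 3.8.15 in \cite{B}), so there is no in-text argument to compare yours against. Your blind proof is a correct self-contained reconstruction of the standard argument and is essentially the one underlying Bogachev's treatment: weak convergence on a separable Hilbert space is equivalent to pointwise convergence of the characteristic functionals plus uniform tightness, conditions (i)--(ii) handle the functionals, and (iii) is precisely what upgrades the operator-norm information to uniform tightness. Two steps in your sketch carry real content and would need to be spelled out in a full write-up. First, the Gr\"umm-type fact that for nonnegative trace-class operators weak (or norm) operator convergence together with $\trace_H(C_n)\to\trace_H(C)$ yields trace-norm convergence: this is true and provable exactly as you indicate (diagonalise $C$ and make the tail of the trace uniformly small), and it is the engine of both directions. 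Second, in the necessity direction the bound $\sup_n(\norm{a_n}_H^2+\trace_H(C_n))<\infty$ does not follow from tightness for general measures; it uses that the measures are Gaussian, via a uniform Fernique-type estimate relating a median of $\norm{u}_H$ to its moments with universal constants. Once that is granted, your Fatou/lower-semicontinuity bookkeeping splitting the limit of $\norm{a_n}_H^2+\trace_H(C_n)$ into (i) and (iii) is sound. What your route buys over the paper's bare citation is an actual proof; what the citation buys is not having to develop the uniform Fernique estimate and the trace-norm lemma, both of which Bogachev already has on the shelf.
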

Let us prove an auxiliary lemma concerning the convergence of the multiplication operators.
\begin{lemma}
  \label{multiplicationconv}
  Let $v_n\to v$ in $W^{t_0,p_0}(\T)$ as $n\to\infty$.
  Then we have
  \begin{equation*}
    \lim_{n\to\infty}\norm{\Lambda(v)-\Lambda_n(v_n)}_{{\mathcal L}(L^2)} = 0.
  \end{equation*}
\end{lemma}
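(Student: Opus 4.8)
The plan is to exploit that both $\Lambda(v)$ and $\Lambda_n(v_n)$ are multiplication operators on $L^2(\T)$, so their difference is again multiplication, now by the symbol $(\ep^2+v^2)^{-1}-(\ep^2+(Q_n v_n)^2)^{-1}$, and the operator norm of a multiplication operator on $L^2$ equals the $L^\infty$-norm of its symbol. Thus
\[
  \norm{\Lambda(v)-\Lambda_n(v_n)}_{\Ll(L^2)}
  = \norm{(\ep^2+v^2)^{-1}-(\ep^2+(Q_n v_n)^2)^{-1}}_{L^\infty}.
\]
Since the scalar map $s\mapsto (\ep^2+s^2)^{-1}$ has derivative $-2s(\ep^2+s^2)^{-2}$, which is bounded uniformly in $s\in\R$ by a constant $L=L(\ep)$, this map is globally Lipschitz, and pointwise one obtains $\abs{(\ep^2+v(x)^2)^{-1}-(\ep^2+(Q_n v_n)(x)^2)^{-1}}\le L\abs{v(x)-(Q_n v_n)(x)}$. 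Hence the whole problem reduces to proving $\norm{v-Q_n v_n}_{L^\infty}\to 0$.

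First I would invoke the Sobolev embedding theorem (as used in Lemma \ref{smoothnessprior}) to fix $0<\alpha<1/2$ with $W^{t_0,p_0}(\T)\hookrightarrow C^{0,\alpha}(\T)$, so that $v_n\to v$ in $W^{t_0,p_0}$ forces $v_n\to v$ in $C^{0,\alpha}$; in particular $\norm{v-v_n}_{L^\infty}\to 0$ while the limit $v$ lies in $C^{0,\alpha}(\T)$. Next I would split
\[
  \norm{v-Q_n v_n}_{L^\infty}
  \le \norm{v-Q_n v}_{L^\infty}+\norm{Q_n(v-v_n)}_{L^\infty}.
\]
For the second term I would note that $Q_n$ replaces a function by its mean value over each cell $K^N_j$, so it is a contraction for the sup-norm, $\norm{Q_n g}_{L^\infty}\le\norm{g}_{L^\infty}$; therefore $\norm{Q_n(v-v_n)}_{L^\infty}\le\norm{v-v_n}_{L^\infty}\to 0$.

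It then remains to estimate the approximation term $\norm{v-Q_n v}_{L^\infty}$ for the fixed H\"older limit $v$. For $x\in K^N_j$ one writes $v(x)-(Q_n v)(x)=N\int_{K^N_j}(v(x)-v(t))\,dt$, and since $\abs{x-t}\le 1/N=2^{-n}$ on $K^N_j$, the H\"older estimate gives $\abs{v(x)-v(t)}\le [v]_{C^{0,\alpha}}2^{-n\alpha}$; averaging yields $\norm{v-Q_n v}_{L^\infty}\le [v]_{C^{0,\alpha}}2^{-n\alpha}\to 0$. Combining the three displays proves the claim. The only genuine obstacle is the passage from $W^{t_0,p_0}$-convergence to uniform convergence of $Q_n v_n$, which is exactly where the H\"older regularity supplied by the Sobolev embedding is essential; once that is in hand, the Lipschitz bound on the symbol and the sup-norm contractivity of $Q_n$ make the rest routine.
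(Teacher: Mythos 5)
Your proof is correct and follows essentially the same route as the paper's: reduce the operator norm to the sup-norm of the symbol difference, use the Sobolev embedding $W^{t_0,p_0}(\T)\hookrightarrow C^{0,\alpha}(\T)$ to get uniform convergence, and control the averaging error of $Q_n$ by the H\"older modulus over cells of width $2^{-n}$. The only cosmetic differences are that you invoke the global Lipschitz bound for $s\mapsto(\ep^2+s^2)^{-1}$ where the paper uses a difference-of-squares estimate with the crude factor $\ep^{-4}$, and you shift the H\"older estimate onto the limit $v$ via the $L^\infty$-contractivity of $Q_n$ whereas the paper applies it directly to $v_n$ using the uniform bound on $\norm{v_n}_{C^{0,\alpha}}$; both are equally valid.
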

\begin{proof}
First notice that 
for some $\alpha>0$ we have by the Sobolev embedding theorem 
that $\norm{v-v_n}_{C^{0,\alpha}} \to 0$.
For any continuous $f:\T\to\R$ denote
\begin{equation*}
  \norm{f}_\infty = \sup_{x\in\T} |f(x)|.
\end{equation*}
Let us then compute an upper bound
\begin{multline*}
  \abs{\frac 1{\epsilon^2+v^2}-\frac 1{\epsilon^2+(Q_nv_n)^2}}
  \leq \frac{1}{\epsilon^4} \left(\abs{(Q_n v_n)^2-v_n^2}+\abs{v_n^2-v^2}\right) \\
  \leq \frac{1}{\epsilon^4} \left(2\norm{v_n}_\infty \abs{Q_nv_n-v_n} + 
    (\norm{v_n}_\infty+\norm{v}_\infty) \norm{v_n-v}_\infty\right).
\end{multline*}
Here the term $\abs{Q_nv_n-v_n}$ can be estimated pointwise as
\begin{eqnarray*}
  \abs{N \int_{K^N_j} v_n(y) dy-v_n(x)} 
  & = & N \abs{\int_{K^N_j} (v_n(x)-v_n(y)) dy} \\
  & \leq & N \int_{K^N_j} \frac{\abs{v_n(x)-v_n(y)}}{\abs{x-y}^\alpha} \cdot \abs{x-y}^\alpha dy \\
  & \leq & \frac{1}{N^\alpha} \norm{v_n}_{C^{0,\alpha}}
\end{eqnarray*}
where $x\in K^N_j$ and $K^N_j$ is the half-open interval $[(j-1)/N, j/N)$.
The above yields
\begin{equation*}
  \lim_{n\to\infty}\norm{\frac 1{\epsilon^2+v^2}-\frac 1{\epsilon^2+(Q_nv_n)^2}}_\infty = 0
\end{equation*}
and thus
\begin{equation*}
  \lim_{n\to\infty}\norm{\l(\Lambda(v)-\Lambda_n(v_n)\r) f}^2_{L^2} \leq \lim_{n\to\infty}
  \norm{\frac 1{\epsilon^2+v^2}-\frac 1{\epsilon^2+(Q_nv_n)^2}}_\infty^2 \norm{f}_{L^2}^2=0. 
\end{equation*}
for all $f\in L^2(\T)$.
\end{proof}

\begin{lemma}
  \label{weakconv}
  Assume $v_n\in PL(n)$ and $v_n\to v$ in $W^{t_0,p_0}(\T)$.
  The measure $\lambda^{v_n}_n$ converges weakly
  to $\lambda^v$ on $L^2(\T)$.
\end{lemma}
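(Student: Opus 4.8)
The plan is to verify the three conditions of Lemma \ref{aux_weak} for the centered Gaussian measures $\lambda^{v_n}_n$ and $\lambda^v$, whose covariance operators are $C_{U_n}(v_n)$ and $C_U(v)$, respectively. Condition (i) is immediate since both measures are centered, so $a_n=a=0$. I would obtain conditions (ii) and (iii) at once by proving the stronger statement that $C_{U_n}(v_n)\to C_U(v)$ in trace norm: indeed $\norm{C_{U_n}(v_n)-C_U(v)}_{\Ll(L^2)}\leq\norm{C_{U_n}(v_n)-C_U(v)}_{\rm tr}$ yields (ii), and $|\trace_{L^2}C_{U_n}(v_n)-\trace_{L^2}C_U(v)|\leq\norm{C_{U_n}(v_n)-C_U(v)}_{\rm tr}$ yields (iii).

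First I would put the two covariances in a comparable form. Writing $L=\td^{-1}$, the computations preceding this lemma give $C_U(v)=L\Lambda(v)L^*$, while the Section 4 identities $S_n\td^{-1}=\td^{-1}T_n$ and $(\td')^{-1}(S_n)'=T_n(\td')^{-1}$ (together with $L^*=(\td')^{-1}$ on $L^2(\T)$) yield
\[
  C_{U_n}(v_n)=L\,T_n\Lambda_n(v_n)T_n\,L^*.
\]
The structural fact I would exploit is that $L$ is Hilbert-Schmidt on $L^2(\T)$: since $L^*L=(\td\,\td^*)^{-1}$ has eigenvalues decaying like $k^{-2}$, it is trace class, so $\norm{L}_{\rm HS}<\infty$. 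Because $T_n$ is the self-adjoint orthogonal projection onto $PC(n)$ and $T_n\to I$ strongly, this upgrades to $\norm{(I-T_n)L^*}_{\rm HS}\to 0$ and $\norm{L(I-T_n)}_{\rm HS}\to 0$ by dominated convergence of the defining series, each term $\norm{(I-T_n)L^*e_k}^2_{L^2}$ tending to $0$ while being dominated by the summable $\norm{L^*e_k}^2_{L^2}$.

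With these ingredients the estimate of $\norm{C_{U_n}(v_n)-C_U(v)}_{\rm tr}$ splits into a multiplier part and two projection parts. For the multiplier part I would insert $\Lambda(v)$ and bound
\[
  \norm{L T_n(\Lambda_n(v_n)-\Lambda(v))T_n L^*}_{\rm tr}\leq \norm{L}_{\rm HS}^2\,\norm{\Lambda_n(v_n)-\Lambda(v)}_{\Ll(L^2)},
\]
which tends to $0$ by Lemma \ref{multiplicationconv}, using $\norm{LT_n}_{\rm HS},\norm{T_nL^*}_{\rm HS}\leq\norm{L}_{\rm HS}$. For the remaining term $L(T_n\Lambda(v)T_n-\Lambda(v))L^*$ I would write it as $LT_n\Lambda(v)(T_n-I)L^*+L(T_n-I)\Lambda(v)L^*$ and bound each summand by $\norm{L}_{\rm HS}\,\norm{\Lambda(v)}_{\Ll(L^2)}$ times $\norm{(I-T_n)L^*}_{\rm HS}$ or $\norm{L(I-T_n)}_{\rm HS}$, both of which vanish. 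Summing the three contributions gives trace-norm convergence of the covariances, hence the lemma.

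The main obstacle, around which the whole argument is organised, is that $T_n\to I$ only strongly and not in operator norm; in fact $T_n\Lambda(v)T_n$ does \emph{not} converge to $\Lambda(v)$ in $\Ll(L^2)$ for a non-constant multiplier, so conditions (ii)--(iii) cannot be read off directly from Lemma \ref{multiplicationconv}. The resolution is precisely the Hilbert-Schmidt property of $L=\td^{-1}$, which converts the strong convergence $T_n\to I$ into genuine norm convergence once $T_n$ is composed with $L$ or $L^*$; sandwiching the troublesome factors between $L$ and $L^*$ is what makes the trace-norm estimate close.
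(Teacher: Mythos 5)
Your proof is correct, but it takes a genuinely different route from the paper's. The paper verifies conditions (ii) and (iii) of Lemma \ref{aux_weak} separately: for the operator-norm convergence it decomposes $C_{U_n}(v_n)-C_U(v)$ into three terms controlled by Lemma \ref{uniformproj} (the norm convergence $\norm{I-S_n}_{\Ll(H^1,H^t)}\to 0$ for $t<1/2$) together with Lemma \ref{multiplicationconv}, and for the trace convergence it estimates the diagonal entries $\bra (C_U(v)-C_{U_n}(v_n))e_j,e_j\cet_{L^2}$ in the Fourier basis term by term, obtaining bounds of the form $o(n)\,j^{-2-s}$ with $-1<s<-1/2$ and summing. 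You instead prove the single stronger statement that the covariances converge in trace norm, using the Hilbert--Schmidt property of $L=\td^{-1}$, the identity $C_{U_n}(v_n)=LT_n\Lambda_n(v_n)T_nL^*$, the bound $\norm{ABC}_{\rm tr}\leq\norm{A}_{\rm HS}\norm{B}_{\Ll}\norm{C}_{\rm HS}$, and the dominated-convergence upgrade of the strong convergence $T_n\to I$ to $\norm{(I-T_n)L^*}_{\rm HS}\to 0$. This buys you a cleaner and more unified argument: it subsumes (ii) and (iii) in one estimate, avoids Lemma \ref{uniformproj} and the interpolation-scale bookkeeping entirely (all you need of the projections is that $\bigcup_n PC(n)$ is dense in $L^2(\T)$), and makes explicit the structural point that the paper exploits only implicitly through the summability of $\norm{(\td')^{-1}e_j}^2_{L^2}\preceq\apr{j}^{-2}$. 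The only blemish is your parenthetical remark that $T_n\Lambda(v)T_n\not\to\Lambda(v)$ in $\Ll(L^2)$ ``for a non-constant multiplier'': this failure in fact already occurs for constant nonzero multipliers, since $\norm{c(T_n-I)}_{\Ll(L^2)}=|c|$; this only reinforces your point and does not affect the argument.
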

\begin{proof}
The condition (i) in Lemma \ref{aux_weak} holds as the means stay constant. 
Furthermore, condition (ii)
follows from the suitable convergence of the operators $S_n$ and $\Lambda_n$. 
Since $L^* = (\td')^{-1}|_{L^2}$ we see this from
  \begin{eqnarray*}
    \norm{C_{U_n}(v_n)-C_U(v)}_{{\mathcal L}(L^2)} & = &
    \norm{S_n \td^{-1} \Lambda_n(v_n) (\td')^{-1}S_n'
    	-\td^{-1} \Lambda(v) (\td')^{-1}}_{{\mathcal L}(L^2)} \\
    & \leq & \norm{S_n \td^{-1} \Lambda_n(v_n) (\td')^{-1}}_{\Ll(H^{-1},L^2)} 
    \norm{S_n'-I}_{\Ll(L^2,H^{-1})} \\
    & & + \norm{S_n-I}_{\Ll(H^1,L^2)} \norm{\td^{-1} \Lambda_n(v_n) (\td')^{-1}}_{\Ll(L^2,H^1)} \\
    & & + \norm{\td^{-1} (\Lambda_n(v_n)-\Lambda(v))(\td')^{-1}}_{\Ll(L^2)}.
  \end{eqnarray*}
In the first two terms of the right hand side recall that $\Lambda_n(v_n)$ is uniformly bounded in 
$\Ll(L^2(\T))$, i.e., the bound is independent of 
$v_n$. Since also $\td^{-1}$ is bounded from $L^2(\T)$
to $H^1(\T)$ we see that Lemma \ref{uniformproj} provides the convergence of these terms. The convergence of the third term
follows from Lemma \ref{multiplicationconv}. 

Let us next consider condition (iii). 
Recall now the projection $T_n = \td S_n \td^{-1} :L^2(\T)\to L^2(\T)$. 
In the following we consider $T_n$ from $L^2(\T)$ to $H^s(\T)$, $s<0$, and hence
the dual operators occur.
Denote $e_j(x) = e^{2\pi i j x}$ for all $j\in \Z$ and notice
$\abs{(\td')^{-1} e_j} \preceq \apr{j}^{-1}$ where $\apr{j} = |j|+1$. We can then write 
\begin{eqnarray*}
  \bra (C_U(v) - C_{U_n}(v_n)) e_j,e_j \cet_{L^2} 
  & = & \bra (T_n \Lambda_n(v_n)T_n' -\Lambda(v)) (\td')^{-1} e_j, (\td')^{-1} e_j\cet_{L^2} \\
  & = & \bra ((T_n-I) \Lambda_n(v_n)T_n') (\td')^{-1} e_j, (\td')^{-1} e_j \cet_{L^2}\\
  & & + \bra (\Lambda_n(v_n) - \Lambda(v)) T_n' (\td')^{-1} e_j, (\td')^{-1} e_j\cet_{L^2} \\
  & & + \bra \Lambda(v)(T_n'-I) (\td')^{-1} e_j, (\td')^{-1} e_j\cet_{L^2}.
\end{eqnarray*}
Let us study the three terms separately: 
a dual norm estimation yields an upper bound for the first term
\begin{multline*}
  \bra ((T_n-I) \Lambda_n(v_n)T_n') (\td')^{-1} e_j, (\td')^{-1} e_j \cet_{L^2}\\
  \leq \norm{T_n-I}_{\Ll(L^2,H^{s})} \norm{\Lambda_n(v_n)}_{\Ll(L^2)}
  \norm{T_n'}_{\Ll(L^2)} \norm{(\td')^{-1} e_j}_{L^2} \norm{(\td')^{-1} e_j}_{H^{-s}} \\
  \leq C \norm{T_n-I}_{\Ll(L^2,H^{s})} j^{-2-s}
\end{multline*}
for any $-1<s<-1/2$.
In the second term we can use Lemma \ref{multiplicationconv} to get
\begin{multline*}
  \bra (\Lambda_n(v_n) - \Lambda(v)) T_n' (\td')^{-1} e_j, (\td')^{-1} e_j\cet_{L^2} \\
  \leq \norm{\Lambda_n(v_n) - \Lambda(v)}_{\Ll(L^2)} \norm{T_n'}_{\Ll(L^2)} \norm{(\td')^{-1} e_j}_{L^2}^2 
  \leq o(n) j^{-2},
\end{multline*}
where $o :\N\to [0,\infty)$ denotes a function that satisfies $\lim_{n\to\infty} o(n)=0$.
The third term yields similar upper estimate as the first term since
\begin{multline*}
  \bra \Lambda(v)(T_n'-I) (\td')^{-1} e_j, (\td')^{-1} e_j\cet_{L^2} \\
  \leq \norm{\Lambda(v)}_{\Ll(L^2)} \norm{T_n'-I}_{\Ll(H^{-s},L^2)} 
  \norm{(\td')^{-1} e_j}_{H^{-s}} \norm{(\td')^{-1} e_j}_{L^2} \\
  \leq C\norm{T_n-I}_{\Ll(L^2,H^{s})} j^{-2-s}.
\end{multline*}
Due to Lemma \ref{uniformproj} and the fact that $\td$ is invertible between $H^t(\T)$ 
and $H^{t-1}(\T)$ for any $t\in\R$ we have $\norm{T_n-I}_{\Ll(L^2,H^{s})} = o(n)$. 
Combining these three bounds yields
\begin{equation*}
  \bra (T_n \Lambda_n(v)T_n' -\Lambda(v)) (\td')^{-1} e_j, (\td')^{-1} e_j\cet_{L^2}
  \leq o(n) j^{-2-s}.
\end{equation*}
Since $\sum_{j=1}^\infty j^{-2-s}$ with $-s>1$ is finite, 
we have shown that ${\rm Tr}_{L^2}(C_U(v) - C_{U_n}(v))$ converges to zero.
This concludes the proof.
\end{proof}

Let us recall the Skorohod coupling theorem.

\begin{theorem}
  \label{skorohod}
  Suppose that a sequence of Borel probability measures $\mu_n$ on a 
  complete separable metric space $B$ converges weakly to
  a Borel measure $\mu$. Then there exists a probability space 
  $(\Omega, \Pro)$ and measurable mappings $X,X_n : \Omega \to B$
  such that $\mu_n = \Pro\circ X_n^{-1}$, $\mu = \Pro \circ X^{-1}$ and $X_n \to X$ a.s.
\end{theorem}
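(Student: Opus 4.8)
The plan is to realize all the $X_n$ and the limit $X$ as functions on a single copy of the unit interval equipped with Lebesgue measure, matching mass cell-by-cell along a sequence of increasingly fine partitions of $B$ into $\mu$-continuity sets. Concretely, take $(\Omega,\Pro)=([0,1],\ell)$ with $\ell$ Lebesgue measure; this space carries a uniform random variable, which supplies enough auxiliary randomness to drive the construction below. The driving principle is the portmanteau theorem: since $\mu_n$ converges weakly to $\mu$, one has $\mu_n(A)\to\mu(A)$ for every Borel set $A$ with $\mu(\partial A)=0$, that is, for every $\mu$-continuity set.

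First I would build the partitions. For a fixed point $x\in B$ the sets $\partial\{y:d(x,y)<r\}$ are disjoint for distinct $r$, so all but countably many radii give balls that are $\mu$-continuity sets; moreover finite intersections and set differences of continuity sets are again continuity sets. Using separability of $B$ together with tightness of $\mu$ (Ulam's theorem, valid since $B$ is Polish), I would construct for each $m\in\N$ a countable Borel partition $B=\bigsqcup_i B^m_i$ into $\mu$-continuity sets each of diameter at most $1/m$, arranged so that the partition at level $m+1$ refines the one at level $m$.

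The core of the argument, and the step I expect to be the main obstacle, is the nested coupling. Fix $m$. The partition $\{B^m_i\}_i$ induces discrete laws $p^m=(\mu(B^m_i))_i$ and $p^{m,n}=(\mu_n(B^m_i))_i$ on the index set, and portmanteau gives $p^{m,n}\to p^m$ entrywise, hence in total variation by Scheff\'e's lemma. I would use the uniform variable on $[0,1]$ to define $X$ with law $\mu$ and $X_n$ with law $\mu_n$ in such a way that, off an event $E_{m,n}$ of probability $\delta_{m,n}:=\tfrac12\sum_i|\mu_n(B^m_i)-\mu(B^m_i)|$, the two variables fall in the \emph{same} cell $B^m_i$; the in-cell placement is then chosen consistently across levels using the refinement property, so that cell-agreement at level $m$ propagates the conditional laws correctly. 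Since each cell has diameter at most $1/m$, agreement at level $m$ forces $d(X_n,X)\le 1/m$. Because $\delta_{m,n}\to 0$ as $n\to\infty$ for each fixed $m$, one may choose $m=m(n)\uparrow\infty$ slowly enough that $\sum_n\delta_{m(n),n}<\infty$.

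Finally I would conclude by Borel--Cantelli: the events $E_{m(n),n}$ occur only finitely often almost surely, so for all large $n$ the variables $X_n$ and $X$ share a level-$m(n)$ cell and hence $d(X_n,X)\le 1/m(n)\to 0$. As $X_n$ has law $\mu_n$ and $X$ has law $\mu$ by construction, this yields exactly the asserted coupling with $X_n\to X$ almost surely. The delicate point throughout is organizing the cell-by-cell matching so that it is simultaneously consistent across all refinement levels and upgrades in-probability agreement to genuine almost-sure convergence; tightness of $\mu$ is what keeps the partitions effectively finite and makes the Borel--Cantelli step available.
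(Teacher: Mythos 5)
The paper offers no proof of this statement: it is recalled verbatim as the classical Skorokhod representation (coupling) theorem and immediately invoked to fix $\Omega_2$, so there is no in-paper argument to measure your proposal against (the result can be found, e.g., in \cite{kallenberg}, which is already in the bibliography). Your outline is essentially the standard textbook proof, and I see no step that would fail: the nested partitions into $\mu$-continuity sets of diameter at most $1/m$ exist by separability, the entrywise convergence $\mu_n(B^m_i)\to\mu(B^m_i)$ upgrades to total variation by Scheff\'e, and the maximal coupling of the two index distributions, performed conditionally on the cell of $X$, realizes each $X_n$ with the correct law while forcing cell agreement off an event of probability $\delta_{m,n}$. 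Two points deserve care when you execute the step you flag as delicate. First, the cross-level consistency is needed only for $X$ itself: completeness makes $X(\omega)$ the unique point of $\bigcap_m \overline{B^m_{i_m(\omega)}}$, and you must discard the $\mu$-null event that this point lands on a cell boundary so that $\Pro(X\in B^m_i)=\mu(B^m_i)$ holds exactly; each $X_n$ lives at the single level $m(n)$ and needs no multi-level bookkeeping, only an auxiliary uniform (extractable from the binary digits of your one uniform on $[0,1]$) for the accept/reject and for the in-cell draw from $\mu_n(\cdot\cap B^{m(n)}_i)/\mu_n(B^{m(n)}_i)$. Second, your Borel--Cantelli closing is valid as stated, since independence of the exceptional events is not required; the classical variant that keeps only finitely many cells per level plus a small remainder (this is where tightness genuinely enters) lets a single acceptance uniform serve all $n$ simultaneously and dispenses with Borel--Cantelli altogether, but your diagonal choice of $m(n)$ with $\sum_n\delta_{m(n),n}<\infty$ works just as well.
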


At this point we fix $\Omega_2$ according to Theorem \ref{skorohod} in such a way
that $V_n \to V$ in $W^{t_0,p_0}(\T)$ almost surely. This choice is made
to achieve the final result. Before following theorem recall the definition of uniform tightness:
A sequence $\{\mu_n\}_{n=1}^\infty$ Borel measures on Banach space $X$ 
is said to be uniformly tight if for every $\delta>0$ there exists a compact set $K_\delta\subset X$
such that $\mu_n(X\setminus K_\delta) < \delta$ for every $n\in\N$.


\begin{theorem}
  When $n$ goes to infinity the random variable $(U_n,V_n)$ converges in distribution to $(U,V)$ in $L^2(\T)\times L^2(\T)$.
\end{theorem}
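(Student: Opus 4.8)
The plan is to exploit the hierarchical product structure $\Omega_{pr}=\Omega_1\times\Omega_2$ together with the Skorohod coupling that has just been fixed, reducing the statement to a fibre-wise convergence of the $V$-conditional integrals followed by an integration over $\omega_2$. Fix an arbitrary bounded continuous $F:L^2(\T)\times L^2(\T)\to\R$; the goal is $\expec F(U_n,V_n)\to\expec F(U,V)$. Using $\Pro_{pr}=\Pro_1\otimes\Pro_2$ and the joint measurability of $(U_n,V_n)$, Fubini's theorem lets me write
\[
\expec F(U_n,V_n)=\int_{\Omega_2}\Phi_n(\omega_2)\,d\Pro_2(\omega_2),\qquad
\Phi_n(\omega_2)=\int_{L^2(\T)}F(u,V_n(\omega_2))\,d\lambda_n^{V_n(\omega_2)}(u),
\]
and analogously $\expec F(U,V)=\int_{\Omega_2}\Phi(\omega_2)\,d\Pro_2(\omega_2)$ with $\Phi(\omega_2)=\int_{L^2(\T)}F(u,V(\omega_2))\,d\lambda^{V(\omega_2)}(u)$. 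Here I use that, for fixed $\omega_2$, the law of $U(\cdot,\omega_2)=\Gamma_{V(\omega_2)}W(\cdot)$ is exactly the centered Gaussian $\lambda^{V(\omega_2)}$, since $\Gamma_{V(\omega_2)}W$ is a Gaussian $L^2(\T)$-variable whose covariance was computed to be $C_U(V(\omega_2))$ at the end of Section 3.

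First I would establish the pointwise convergence $\Phi_n(\omega_2)\to\Phi(\omega_2)$ for $\Pro_2$-almost every $\omega_2$. Since $\Omega_2$ has been fixed via Theorem \ref{skorohod} so that $v_n:=V_n(\omega_2)\to v:=V(\omega_2)$ in $W^{t_0,p_0}(\T)$ almost surely, Lemma \ref{weakconv} yields $\lambda_n^{v_n}\to\lambda^{v}$ weakly on $L^2(\T)$ for a.e.\ $\omega_2$. The delicate point is that the integrand $u\mapsto F(u,v_n)$ also depends on $n$, so weak convergence cannot be invoked directly. To handle this I would apply Theorem \ref{skorohod} a second time, now for the fixed $\omega_2$: choose $L^2(\T)$-valued variables $Y_n\sim\lambda_n^{v_n}$ and $Y\sim\lambda^{v}$ on an auxiliary probability space with $Y_n\to Y$ almost surely. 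Because $v_n\to v$ in $W^{t_0,p_0}(\T)$ and hence in $L^2(\T)$ (by the Sobolev embedding used in the proof of Lemma \ref{multiplicationconv}), we have $(Y_n,v_n)\to(Y,v)$ in $L^2(\T)\times L^2(\T)$, so joint continuity of $F$ gives $F(Y_n,v_n)\to F(Y,v)$ a.s.; since $|F|\leq\norm{F}_\infty$, bounded convergence yields $\Phi_n(\omega_2)=\expec F(Y_n,v_n)\to\expec F(Y,v)=\Phi(\omega_2)$.

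Finally I would pass to the limit in the outer integral. The functions $\Phi_n$ are bounded uniformly by $\norm{F}_\infty$, which is $\Pro_2$-integrable on the probability space $\Omega_2$, so the dominated convergence theorem applied to the almost-everywhere convergence $\Phi_n\to\Phi$ gives $\int_{\Omega_2}\Phi_n\,d\Pro_2\to\int_{\Omega_2}\Phi\,d\Pro_2$, i.e.\ $\expec F(U_n,V_n)\to\expec F(U,V)$. As $F$ was an arbitrary bounded continuous function, this is precisely weak convergence of the law of $(U_n,V_n)$ to that of $(U,V)$ on $L^2(\T)\times L^2(\T)$, which is the assertion. The main obstacle is exactly the mismatch flagged above, that the integrand $F(\cdot,V_n(\omega_2))$ drifts together with the conditional measure $\lambda_n^{V_n(\omega_2)}$; the twofold use of Skorohod coupling---once globally to realize $V_n\to V$ almost surely, and once fibre-wise to realize the conditional Gaussians on a common space---is what resolves it cleanly.
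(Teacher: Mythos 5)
Your proof is correct, but it follows a genuinely different route from the paper's. The paper establishes weak convergence by verifying the two hypotheses of Corollary 3.8.5 in \cite{B}: (a) uniform tightness of the joint laws $\lambda_n$, obtained by comparing the covariances of $\lambda_n^v$ with those of $\lambda_n^{\zerof}$ and invoking Theorem 3.3.6 of \cite{B} on an absolutely convex compact set, and (b) pointwise convergence of the characteristic functionals. The advantage of testing against characteristic functionals is that the test function $\exp(i\bra u,\phi\cet_{L^2})\exp(i\bra v,\psi\cet_{L^2})$ splits as a product, so the inner integrand does not drift with $n$ and Lemma \ref{weakconv} applies verbatim; the price is that in infinite dimensions characteristic-functional convergence alone is insufficient, forcing the tightness argument. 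You instead test against an arbitrary bounded continuous $F$ on the product space, which makes tightness unnecessary (weak convergence then holds by definition) but creates exactly the drifting-integrand problem you flag; your resolution via a second, fibre-wise application of Theorem \ref{skorohod} to the sequence $\lambda_n^{v_n}\to\lambda^{v}$ on the Polish space $L^2(\T)$ is sound, and the surrounding steps (disintegration via Fubini using the product structure $\Pro_1\otimes\Pro_2$, the identification of the conditional law of $U(\cdot,\omega_2)$ as $\lambda^{V(\omega_2)}$ from the covariance computation at the end of Section 3, and the two bounded/dominated convergence passages) are all justified. In short, you trade the paper's tightness-plus-Fourier machinery for a more elementary double coupling argument; both are complete, and yours arguably isolates more transparently where the almost sure convergence of $V_n$ and Lemma \ref{weakconv} enter.
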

\begin{proof}
Let us first show the uniform tightness of the sequence $\{\lambda_n\}_{n=1}^\infty$ where
$\lambda_n$ is the joint distribution of $(U_n,V_n)$ on $L^2(\T)\times L^2(\T)$.
The convergence of $V_n$ in distribution yields that probability measures $\{\nu_n\}_{n=1}^\infty$
are uniformly tight. Let $\delta>0$ be given and choose a compact set $K_2\subset L^2(\T)$ 
in such a way that $\nu_n(K_2) > 1-\frac{\delta}{2}$. Next we consider the tightness of a family
$\{\lambda_n^v \; | \; v\in K_2,n\in \N\}$. By Lemma \ref{weakconv} the sequence 
$\{\lambda_n^{{\bf 0}}\}_{n=1}^\infty$ converges weakly and in consequence is uniformly tight.
We choose $K_1\subset L^2(\T)$ so that $\lambda_n^{{\bf 0}}(K_1)>1-\frac{\delta}{2}$.
We may also assume that $K_1$ is absolutely convex since by Proposition A.1.6 in \cite{B}
closed absolutely convex hulls of compact sets are compact. 
Recall the definition of the covariance
$C_U(v) = L \Lambda(v) L^*$ of $\lambda_n^v$ in equation \eqref{covariances}. 
For any fixed $v\in L^2(\T)$
we know that
\begin{equation*}
  \int_{L^2(\T)} \bra u,\phi\cet^2_{L^2} d\lambda_n^v(u)
  = \bra \Lambda(v) L^* \phi, L^* \phi\cet_{L^2}
  \leq \frac{1}{\epsilon^2} \norm{L^* \phi}^2_{L^2}
  = \int_{L^2(\T)} \bra u,\phi\cet^2_{L^2} d\lambda_n^{{\bf 0}}(u).
\end{equation*}
for all $\phi\in L^2(\T)$.
By Theorem 3.3.6 of \cite{B} this yields 
\begin{equation*}
1-\frac{\delta}{2}< \lambda_n^{{\bf 0}}(K_1) \leq \lambda_n^{v}(K_1).
\end{equation*}
Now we are able to deduce the uniform tightness of $\{\lambda_n\}$ by setting 
$K_\delta = K_1\times K_2$, namely,
\begin{multline*}
  \lambda_n((L^2(\T) \times L^2(\T)) \setminus K_\delta)  \\ =
  \lambda_n((L^2(\T) \setminus K_1)\times K_2) + \lambda_n(L^2(\T) \times (L^2(\T) \setminus K_2)) \\
  \leq \int_{K_2} \lambda^v_n(L^2(\T) \setminus K_1) d\nu_n(v)
  + \int_{L^2(\T) \setminus K_2} \lambda^v_n(L^2(\T)) d\nu_n(v) \\
  \leq \frac{\delta}{2} + \frac{\delta}{2} = \delta.
\end{multline*}
Moreover, by the Fubini theorem the characteristic function of $(U_n,V_n)$ can be written as
\begin{multline*}
\expec \exp\left(i\bra U_n,\phi\cet_{L^2}+ i \bra V_n, \psi\cet_{L^2}\right) \\
= \expec \left(\int_{L^2(\T)} 
\exp(i\bra u,\phi\cet_{L^2}) d\lambda^{V_n(\omega_2)}_n(u)
\exp(i\bra V_n(\omega_2),\psi\cet_{L^2}) \right).
\end{multline*}
The almost sure convergence of $V_n$ and Lemma \ref{weakconv} together imply
\begin{equation*}
  \lim_{n\to\infty} \int_{L^2(\T)} \exp(i\bra u,\phi\cet_{L^2}) d\lambda^{V_n(\omega_2)}_n(u)
  = \int_{L^2(\T)} \exp(i\bra u,\phi\cet_{L^2}) d\lambda^{V(\omega_2)}(u)
\end{equation*}
and furthermore
\begin{equation*}
  \lim_{n\to\infty} \exp(i\bra V_n(\omega_2),\psi\cet_{L^2}) = \exp(i \bra V(\omega_2),\psi\cet_{L^2})
\end{equation*}
for almost every $\omega_2\in \Omega_2$. In consequence, we see by the Lebesgue dominated
convergence theorem that the characteristic functions of $(U_n,V_n)$ converge to
the characteristic function of $(U,V)$ pointwise.

By Corollary 3.8.5 in \cite{B} the uniform tightness and
pointwise converging characteristic functions yield that the random variables $(U_n,V_n)$ 
converge in distribution. Since two measures on ${\mathcal B}(L^2(\T)\times L^2(\T))$
with equal characteristic functionals coincide we conclude that $(U,V)$ is a limit.
\end{proof}

\subsection{Uniformly finite exponential moments}

In this section we establish the uniform exponential boundedness of $(U_n,V_n)$, $n\in\N$ and
$(U,V)$. Here we denote
\begin{equation*}
  \norm{(f,g)}_{L^2\times L^2} := \sqrt{\norm{f}_{L^2}^2 + \norm{g}^2_{L^2}}
\end{equation*}
for all $f,g\in L^2(\T)$.
\begin{lemma}
  \label{expglu}
  For every $b>0$ there exists a constant $C(b)>0$ such that
  \begin{equation}
    \label{expgrowthbound}
    \expec \exp(b\norm{(U_n,V_n)}_{L^2\times L^2}) < C(b) \quad {\rm and} \quad
    \expec \exp(b\norm{(U,V)}_{L^2\times L^2}) < C(b)
  \end{equation}
  for every $n\in \N$.
\end{lemma}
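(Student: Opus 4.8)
The plan is to reduce everything to the exponential integrability of the $L^2$-norm of a centered Gaussian and to control the relevant covariances uniformly in $n$. First I would use $\norm{(f,g)}_{L^2\times L^2}\le\norm{f}_{L^2}+\norm{g}_{L^2}$ to split
\begin{equation*}
  \expec\exp(b\norm{(U_n,V_n)}_{L^2\times L^2})
  \le\expec\bigl(\exp(b\norm{U_n}_{L^2})\exp(b\norm{V_n}_{L^2})\bigr).
\end{equation*}
Since $U_n$ and $V_n$ are not independent I would not factor this directly; instead, recalling that for each fixed $\omega_2$ the law of $\omega_1\mapsto U_n(\omega_1,\omega_2)$ is the centered Gaussian $\lambda^{V_n(\omega_2)}_n$, Fubini's theorem gives
\begin{equation*}
  \expec\exp(b\norm{(U_n,V_n)}_{L^2\times L^2})
  \le\expec_{\omega_2}\Bigl(\exp(b\norm{V_n(\omega_2)}_{L^2})\int_{L^2(\T)}\exp(b\norm{u}_{L^2})\,d\lambda^{V_n(\omega_2)}_n(u)\Bigr).
\end{equation*}
It then suffices to bound each factor by a constant depending only on $b$.

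The workhorse will be the following elementary estimate, which I would establish first: for a centered Gaussian measure $\mu$ on a separable Hilbert space with trace class covariance $C$ and $T=\trace C$,
\begin{equation*}
  \int\exp(b\norm{u})\,d\mu(u)\le\sqrt{e}\,\exp(b^2T).
\end{equation*}
Diagonalizing $C$ and writing $\norm{u}^2=\sum_j\lambda_j\xi_j^2$ with i.i.d.\ standard normal $\xi_j$, the chi-square moment generating function gives $\int\exp(\tfrac{1}{4T}\norm{u}^2)\,d\mu=\prod_j(1-\tfrac{\lambda_j}{2T})^{-1/2}\le e^{1/2}$ (using $\lambda_j\le T$), and combining this with the pointwise bound $b\norm{u}\le\tfrac{1}{4T}\norm{u}^2+b^2T$ yields the claim. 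The decisive feature is that the bound depends on $C$ only through its trace.

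The uniformity then hinges on a single operator inequality. Writing the covariance as the quadratic form $\bra C_{U_n}(v)\phi,\phi\cet_{L^2}=\bra\Lambda_n(v)\eta,\eta\cet_{L^2}$ with $\eta=(\td')^{-1}(S_n)'\phi$ independent of $v$, and noting that $\Lambda_n(v)$ is multiplication by $(\ep^2+(Q_nv)^2)^{-1}\le\ep^{-2}$, I obtain $\bra C_{U_n}(v)\phi,\phi\cet_{L^2}\le\ep^{-2}\norm{\eta}_{L^2}^2=\bra C_{U_n}(\zerof)\phi,\phi\cet_{L^2}$, that is $C_{U_n}(v)\le C_{U_n}(\zerof)$ as operators, whence $\trace C_{U_n}(V_n(\omega_2))\le\trace C_{U_n}(\zerof)$ for every $\omega_2$. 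By Lemma \ref{weakconv} applied with $v_n=v=\zerof$ together with Lemma \ref{aux_weak}(iii), the sequence $\trace C_{U_n}(\zerof)$ converges, so $T_0:=\sup_n\trace C_{U_n}(\zerof)<\infty$, and the Gaussian estimate bounds the inner integral by $\sqrt{e}\,\exp(b^2T_0)$, uniformly in $\omega_2$ and $n$. For the remaining factor I would write $V_n=\onef+\tilde V_n$ with $\tilde V_n$ centered of covariance $C_{V_n}$; since $\norm{\onef}_{L^2}=1$ and $\sup_n\trace C_{V_n}=:T_V<\infty$ (the traces converging as $\nu_n\to\nu$, again via Lemma \ref{aux_weak}(iii)), the same estimate gives $\expec_{\omega_2}\exp(b\norm{V_n}_{L^2})\le e^{b}\sqrt{e}\,\exp(b^2T_V)$. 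The product of the two bounds is the desired $C(b)$, independent of $n$.

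The limiting statement for $(U,V)$ follows from the identical argument, now conditioning on $\omega_2$ so that $\Gamma_{V(\omega_2)}W$ is centered Gaussian with covariance $C_U(V(\omega_2))=L\Lambda(V(\omega_2))L^*$ from \eqref{covariances}; once more $\Lambda(v)\le\ep^{-2}I$ forces $\trace C_U(V(\omega_2))\le\trace C_U(\zerof)<\infty$, while $V$ carries the fixed covariance $C_V$. I expect the main obstacle to be precisely this uniform trace control: one must recognize that the awkward nonlinear dependence of the covariance on $v$ is entirely dominated by the single bound $\Lambda_n(v)\le\ep^{-2}I$, and that the reference traces $\trace C_{U_n}(\zerof)$ stay bounded in $n$ — which is exactly what the weak-convergence lemmas of this section supply.
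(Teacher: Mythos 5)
Your proof is correct, and its overall architecture matches the paper's: split the norm, condition on $\omega_2$ via the product structure and Fubini, and reduce everything to a uniform-in-$n$ and uniform-in-$\omega_2$ bound on the trace of the conditional covariance, which ultimately rests on the single inequality $\Lambda_n(v)\leq\ep^{-2}I$. You differ in two respects. First, the paper handles the two cases asymmetrically: it invokes the Fernique theorem for the limit $(U,V)$ and then, for the finite-rank covariances of $(U_n,V_n)$, redoes the exponential-moment bound by hand via the chi-square moment generating function together with the elementary inequality $(1-s)^{-1/2}<1+s$, arriving at $\expec\exp(b\norm{X})\leq\exp(b\norm{\expec X})\exp(b^2/4a)\exp(2a\,\trace C_X)$ for $a<1/(4\trace C_X)$. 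Your single estimate $\int\exp(b\norm{u})\,d\mu\leq\sqrt{e}\,\exp(b^2\trace C)$ is valid for any trace-class Gaussian, so it covers both cases at once and dispenses with Fernique entirely; this is a genuine simplification (the product bound $\prod_j(1-\lambda_j/(2T))^{-1/2}\leq e^{1/2}$ does need the elementary inequality $-\log(1-s)\leq 2s$ on $[0,\tfrac12]$, which you should state, but it is standard). Second, for the uniform trace bound the paper computes directly in the Fourier basis, $\trace C_{U_n}(V_n(\omega_2))\leq\ep^{-2}\sum_{j}\norm{\td^{-1}e_j}_{L^2}^2<\infty$, whereas you pass through the operator domination $C_{U_n}(v)\leq C_{U_n}(\zerof)$ and then appeal to Lemma \ref{weakconv} and the "only if" direction of Lemma \ref{aux_weak} to see that $\trace C_{U_n}(\zerof)$ converges and is hence bounded. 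Both are valid (there is no circularity, since those lemmas precede this one), but the paper's direct computation is self-contained and gives an explicit constant, while yours leans on the weak-convergence machinery; either way the decisive point, as you correctly identify, is that the nonlinear dependence on $v$ is killed by $\Lambda(v)\leq\ep^{-2}I$.
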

\begin{proof}
Let us first show the 
boundedness of the exponential moments of $(U,V)$.
By using the inequality $\norm{(f,g)}_{L^2\times L^2} \leq \norm{f}_{L^2} + \norm{g}_{L^2}$ and Lemma \ref{bdednessofA}
we have
\begin{eqnarray}
  \expec \exp(b\norm{(U,V)}_{L^2\times L^2})
  & \leq & \expec \exp(b\norm{\Gamma_{V(\omega_2)}W(\omega_1)}_{L^2}
  + b\norm{V(\omega_2)}_{L^2}) \nonumber \\
  & \leq & \expec \exp(\tilde b\norm{W(\omega_1)}_{H^s})
  \cdot \expec \exp(b\norm{V(\omega_2)}_{L^2})
  \label{expectationfubini}
\end{eqnarray}
with some constant $\tilde b>0$ and some $-1<s<-\frac 12$.
Moreover, the Fernique theorem \cite[Thm. 2.6]{DaPrato} states 
that for every Gaussian random variable $X$ in Banach space $(B,{\mathcal B}(B))$ there exists a constant $a>0$
such that 
\begin{equation*}
  \expec \exp(a\norm{X-\expec X}_B^2) < \infty.
\end{equation*}
Let $b\in\R$ be arbitrary. The trivial estimate $0 \leq a(x-b/2a)^2$ for any $x\in\R$ yields
\begin{equation}
  \label{trickeq}
  \expec \exp(b\norm{X}_B) \leq \exp(b\norm{\expec X}_B) \cdot \exp(b^2/4a) \cdot \expec \exp(a\norm{X-\expec X}_B^2) < \infty.
\end{equation}
Now the claim for $(U,V)$ follows by applying inequality \eqref{trickeq}
to the right-hand side of inequality \eqref{expectationfubini}.

The uniform bound for $(U_n,V_n)$, $n\in\N$, requires more careful analysis.
Consider for the moment a Gaussian random variable $X$ in $L^2(\T)$ with covariance operator $C_X:L^2(\T)\to L^2(\T)$
such that $\dim {\rm Ran}(C_X) = \ell<\infty$.
Let $\{\rho_j\}_{j=1}^\ell$ be the non-zero eigenvalues 
and $\{\phi_j\}_{j=1}^\ell$ be the corresponding $L^2$-normalized eigenvectors of $C_X$.
Notice that the normal random variables $\bra X-\expec X,\phi_j\cet_{L^2}$ and $\bra X-\expec X,\phi_k\cet_{L^2}$
are independent when $j\neq k$. For any $a < 1/(2\rho_j)$, $1\leq j\leq \ell$, we have
\begin{equation*}
  \expec(\exp(a\bra X-\expec X,\phi_j\cet_{L^2}^2)) = (1-2a\rho_j)^{-\frac 12}.
\end{equation*}
The operator $C_X$ is positive definite and hence
\begin{equation*}
  \max_{j\in\{1,...,\ell\}} \rho_j \leq \trace_{L^2}(C_X).
\end{equation*}
Notice now that $(1-s)^{-1/2}<1+s$ with $0<s<1/2$. 
In consequence, if $a$ satisfies
\begin{equation*}
  a < \frac{1}{4 \trace_{L^2} (C_X)}
\end{equation*}
then for every $j=1,...,\ell$ it follows that 
\begin{equation}
  \label{aux_exp_1}
  \expec (\exp(a\bra X-\expec X,\phi_j\cet^2)) \leq 1 + 2a\rho_j \leq\exp(2a\rho_j).
\end{equation}
Due to the independence of random variables $\bra X-\expec X,\phi_j\cet_{L^2}$ 
and \cite{Vak2} we have
\begin{multline}
  \expec \exp(a \norm{X-\expec X}_{L^2}^2)
  = \prod_{j=1}^\ell \expec \exp(a\bra X-\expec X,\phi_j\cet_{L^2}^2) \\
  \leq \exp(2a\sum_{j=1}^\ell \rho_j) \leq \exp(2a\trace_{L^2}(C_X)) < \infty
  \label{aux_ineq_exp_1}
\end{multline}
where we have used the inequality \eqref{aux_exp_1}. Combining inequalities \eqref{trickeq}
and \eqref{aux_ineq_exp_1} in the case $B=L^2(\T)$ yields
\begin{equation}
  \label{aux_ineq_exp_2}
  \expec \exp(b\norm{X}_{L^2}) \leq \exp(b\norm{\expec X}_{L^2}) \cdot \exp(b^2/4a)\cdot\exp(2a\trace_{L^2}(C_X)).
\end{equation}
Let us next show that the trace of $C_{U_n}(V_n(\omega_2))$ is bounded uniformly with respect to $n\in\N$
and $\omega_2\in\Omega_2$. Denote $e_j(x) = \exp(-2\pi ijx)$ for $x\in\T$ and $j\in\Z$.
A straightforward computation yields
\begin{eqnarray}
  \trace_{L^2} (C_{U_n}(V_n(\omega_2))) & = &  
  \sum_{j\in\Z} \bra \Lambda_n(V_n(\omega_2)) T_n \td^{-1} e_j, T_n \td^{-1} e_j\cet_{L^2} \nonumber \\
  & \leq & \frac{1}{\ep^2} \sum_{j\in\Z} \norm{T_n \td^{-1} e_j}_{L^2} \nonumber \\
  & \leq &\frac{1}{\epsilon^2} \sum_{j\in\Z} \norm{\td^{-1} e_j}_{L^2}^2=C'<\infty \label{ctconst}
\end{eqnarray}
for some constant $C'<\infty$. Clearly $C'$ does not depend on $n$ or $\omega_2$.
With similar arguments we can show that
\begin{equation}
  \label{ctconst2}
  \trace_{L^2} (C_{V_n}) \leq C''
\end{equation}
where constant $C''$ does not depend on $n$.
By the Fubini theorem we have
\begin{multline*}
  \expec \exp(b\norm{(U_n,V_n)}_{L^2\times L^2}) \leq \\
  \int_{L^2(\T)} \left(\int_{L^2(\T)} \exp(b \norm{u}_{L^2}) d\lambda^{v}_n(u)\cdot \exp(b\norm{v}_{L^2})\right) d\nu_n(v)
\end{multline*}
and finally due to inequalities \eqref{aux_ineq_exp_2}, \eqref{ctconst} and
\eqref{ctconst2} we obtain
\begin{eqnarray*}
  \expec \exp(b\norm{(U_n,V_n)}_{L^2\times L^2}) 
  &\leq & \exp(b^2/4a) \exp(2a C') \expec\exp(b\norm{V_n(\omega_2)}_{L^2}) \\
  & \leq & \exp(b^2/2a + b + 2a(C'+C''))
\end{eqnarray*}
for any $a< \frac 14 \min(\frac 1{C'},\frac 1{C''})$. The claim follows by taking the maximum of the bounds on $(U,V)$
and $(U_n,V_n)$, $n\in\N$.
\end{proof}


\section{Computational example}
\label{sec:comp}

In this section we illustrate by a numerical example how the method produces reconstructions 
with similar properties as Ambrosio-Tortorelli minimization \cite{AT1,AT2} in deterministic case.
We show how the choice of $\epsilon$ controls the edge-preserving
property of our reconstruction method. Moreover, we compute reconstructions 
with different choices of $n$ to convince the reader that the estimates stay stable.

\subsection{The model problem}

Let us consider a Bayesian deblurring problem $M = AU+\Ec$
on $\T$ where $A:L^2(\T)\to C^\infty(\T)$ is the operator
\begin{equation}
  Au(x) = \int_\T K(x,y) u(y) dy
\end{equation}
with a priori known smooth kernel $K$ satisfying $\int_\T K(x',y) dx' = \int_\T K(x,y') dy' = 1$
for all $x,y\in \T$.
Assume also the following two properties:
\begin{itemize}
\item[(i)] the noise $\Ec$ can be modeled by white noise statistics and
\item[(ii)] the measurement projection $P_k : L^2(\T) \to PL(k)$ is proper in the sense of Definition \ref{propermeasurement}.
\end{itemize}
As we have earlier discussed the assumptions above are related to the measurement situation. 
Let us then implement the prior distributions and discretization introduced in previous sections.
Recall mappings ${\mathcal I}_n,{\mathcal J}_n : PL(n) \to \R^N$ with $N=2^n$ from Section 3.3.
Using Theorems \ref{vndensity} and \ref{undensity} we see that the posterior density
for computational model \eqref{compmodel} has 
the following form: let $\bu = {\mathcal I}_n(u)$ and $\bv = {\mathcal J}_n(v)$. Then we have
\begin{equation}
  \pi_{kn}(\bu,\bv \;|\; \bm) \propto \exp (-\frac 12 F_{\epsilon,k,n} (u,v \; | \;m))
\end{equation}
where $\bu,\bv \in \R^N$, $\bm \in \R^K$ and
\begin{eqnarray*}
  F_{\epsilon,k,n}(u,v \;|\; m) & = & \int_\T \left(-N \log(\epsilon^2+(Q_n v)^2) 
  +(\epsilon^2+(Q_n v)^2) \abs{\td u}^2\right. \\
  & &\left.+ \epsilon \abs{D v}^2 + \frac 1{4\epsilon} (1-v)^2 + \abs{A_{kn}u-m}^2 \right)dx
\end{eqnarray*}
where $u,v\in PL(n)$, $m\in PL(k)$, $N=2^n$ and $K=2^k$. 
Due to equation \eqref{recon_vs_cm} the computational task is then to evaluate integrals
\begin{eqnarray}
  \bu^{CM}_{kn} & = & \int_{\R^N\times \R^N} \bu\cdot \pi_{kn} (\bu,\bv \; | \; \bm) \, d\bu d\bv\quad {\rm and} \nonumber \\
  \bv^{CM}_{kn} & = & \int_{\R^N\times \R^N} \bv\cdot \pi_{kn} (\bu,\bv \; | \; \bm) \, d\bu d\bv.\label{comp_cm_est}
\end{eqnarray}

\subsection{Computation of the CM estimates}

The integrals in equation \eqref{comp_cm_est} are taken over a very large dimensional space and for that reason
it is impossible to implement efficiently any quadrature rule. Usually 
in such situations different types of
Markov Chain Monte Carlo (MCMC) methods are used to obtain a solution.
In the following let us ease our presentation by denoting
\begin{equation*}
  \bw = \vek{\bu}{\bv} \in \R^{2N}.
\end{equation*}
The idea of MCMC algorithms is to generate a collection 
$\bw^1,...,\bw^L \in \R^{2N}$ of samples
according to the posterior distribution. When $L$ is large
we can approximate the CM estimates in \eqref{comp_cm_est} by
\begin{equation}
  \vek{\bu^{CM}_{kn}}{\bv^{CM}_{kn}} = \bw^{CM}_{kn} = \int_{\R^{2N}} \bw \cdot \pi_{kn}(\bw \; 
  | \; \bm )\; d\bw \approx \frac{1}{L-\ell_0} \sum_{\ell=\ell_0+1}^L
  \bw^\ell
\end{equation}
where $\ell_0$ stands for the number of samples in a {\it burn-in period}, i.e., the samples
that do not explore the posterior distribution representatively and are discarded.

The algorithm used here for generating the ensemble is an adaptive version of the Met\-ro\-polis--Hastings (MH) algorithm
\cite{Has,Met,lehtinen2,HST1,SV},
namely single component adaptive Metropolis (SCAM)
algorithm introduced in \cite{HST}. The SCAM algorithm is similar to the basic single component Metropolis
algorithm in the sense that a sample state, say, $\bw^\ell$ is attained by updating the coordinates separately.
When deciding the $j^{th}$ coordinate $\bw^\ell_j$ a sample is drawn from the normal distribution
${\mathcal N}(\bw^{\ell-1}_j, \sigma^\ell_j)$ centered at the previous point with variance $\sigma^\ell_j$. The difference
is to update variances $\sigma^\ell_j$ according to the rule 
\begin{equation}
  \label{varupd}
  \sigma^\ell_j = \left\{ 
    \begin{array}{ll}
      \sigma^0_j, & \ell\leq\ell_0, \\
      s{\rm Var}\left(\bw^0_j,\bw^1_j, ..., \bw^{\ell-1}_j\right)+\delta, & \ell>\ell_0.
    \end{array}
  \right.
\end{equation}
Here $s$ denotes the scaling factor for which the value $s=2.4$ (see \cite{HST,Gelman}) is used here.
The role of $\delta$ is to prevent the variance from shrinking to zero and a small constant ($\delta = 10^{-3}$)
is used as its value. We close this section by showing in pseudo-code how the SCAM algorithm can be implemented.
\begin{itemize}
\item[(1)] Initialize $\bw^0\in \R^{2N}$ and variances $(\sigma^0_i)_{i=1}^{2N}$. Set $\ell:=1$ and $j:=1$.

\item[(2)] Update $\sigma^\ell_j$ from formula \eqref{varupd}.

\item[(3)] Sample $\tau_j\in \R$ from ${\mathcal N}(0,\sigma^\ell_j)$ and set
  \begin{eqnarray*}
    \bw^{new} & = & (\bw_1^\ell, ... , \bw^\ell_{j-1}, \bw^{\ell-1}_j+\tau_j,\bw^{\ell-1}_{j+1}, ..., \bw^{\ell-1}_{2N})^T
    \quad {\rm and} \\
    \bw^{old} & = & (\bw_1^\ell, ... , \bw^\ell_{j-1}, \bw^{\ell-1}_j,\bw^{\ell-1}_{j+1}, ..., \bw^{\ell-1}_{2N})^T.
  \end{eqnarray*}

\item[(4)] If 
\begin{equation*}
  \pi_{kn}(\bw^{new} \;|\; \bm) \geq \pi_{kn}(\bw^{old}\; | \; \bm),
\end{equation*}
set $\bw^\ell_j := \bw^{\ell-1}_j + \tau_j$; and go to 6. 
\item[(5)] Draw a random number $t$ from the uniform distribution on $[0,1]$.
If 
\begin{equation*}
  t\leq \frac{\pi_{kn}(\bw^{old} \; | \; \bm)}{\pi_{kn}(\bw^{new}\; | \; \bm)},
\end{equation*}
set $\bw^\ell_j :=\bw^{\ell-1}_j + \tau_j$; else set $\bw^\ell_j := \bw^{\ell-1}_j$.
\item[(6)] If $j<2N$, set $j \leftarrow j+1$ and go to 2; else if $j=2N$ and $\ell<L$,
set $\ell \leftarrow \ell+1$ and $j\leftarrow 1$ and go to 2; else if $j=2N$ and $\ell=L$ then stop.
\end{itemize}

\subsection{Results}

All computations were done using the interval $[0,1]$ with point $1$ identified as $0$.
Here the parameter for measurement nodes is kept fixed and is chosen to be $k=7$, i.e., we have $K = 2^k = 128$
measurement nodes. The number of nodes for the estimates varies between 64 and 256, i.e., $n$ varies between 6 and 8.
See Figure 1 for the exact solution $u \in L^2(\T)$ and the measured data $m_k\in PL(k)$. The noise in the measurement
was produced from a white noise distribution. Parameters of the MCMC computations are given in Table 1; in each case we take
initial values that correspond zero function for $u$ and $\onef(x)\equiv 1$ function for $v$. Both Figures 2 and 3 illustrate
how the results look when $n$ is increased. The difference between the two figures is the choice of $\epsilon$; in Figure 2
we have chosen $\epsilon = 10^{-3}$ and in Figure 3 the corresponding value is $3\times 10^{-4}$. Moreover, parameter $q$ in
\eqref{perturbed_der} was chosen large enough in order to get quantity $\epsilon^q$ neglectable.

We perform all the computations with Matlab 7.6 running in a desktop PC computer with an AMD Opteron 265 dual-dual 
processor and 8 GB of RAM. Note that the algorithm is not parallelized and thus only one of the processors 
running at 1,8GHz was in full use at a time.

\begin{table}
\caption{Parameters of MCMC computations. The number $N$ is the dimension of reconstruction, 
$\epsilon$ is the prior parameter, $L-\ell_0$
is the number of samples used for computing the CM estimate, $r$ is the total acceptance ratio, i.e., all
samples accepted vs. samples tested and the last column indicates the amount of CPU time used for computations.}
\begin{center}
\begin{tabular}{ccccc}
\hline
$N$ & $\epsilon$ & $L-\ell_0$ & $r$ & Time (h)\\
\hline
64 & $10^{-3}$ & $10^6$ & 0.35 & 6.6 \\
64 & $3\times 10^{-4}$ & $10^6$ & 0.36 & 7.3 \\
128 & $10^{-3}$ & $2\times 10^6$ & 0.27 & 25.3 \\
128 & $3\times 10^{-4}$ & $2\times 10^6$ & 0.33 & 26.9 \\
256 & $10^{-3}$ & $2\times 10^6$ & 0.18 & 50.6 \\
256 & $3\times 10^{-4}$ & $2\times 10^6$ & 0.25 & 53.7 \\
\hline
\end{tabular}
\end{center}
\label{table1}
\end{table}

\begin{figure}[h]
\begin{picture}(380,100)(40,10)
\epsfxsize=5.8cm
\epsfysize=3.1cm
\put(40,0){\epsffile{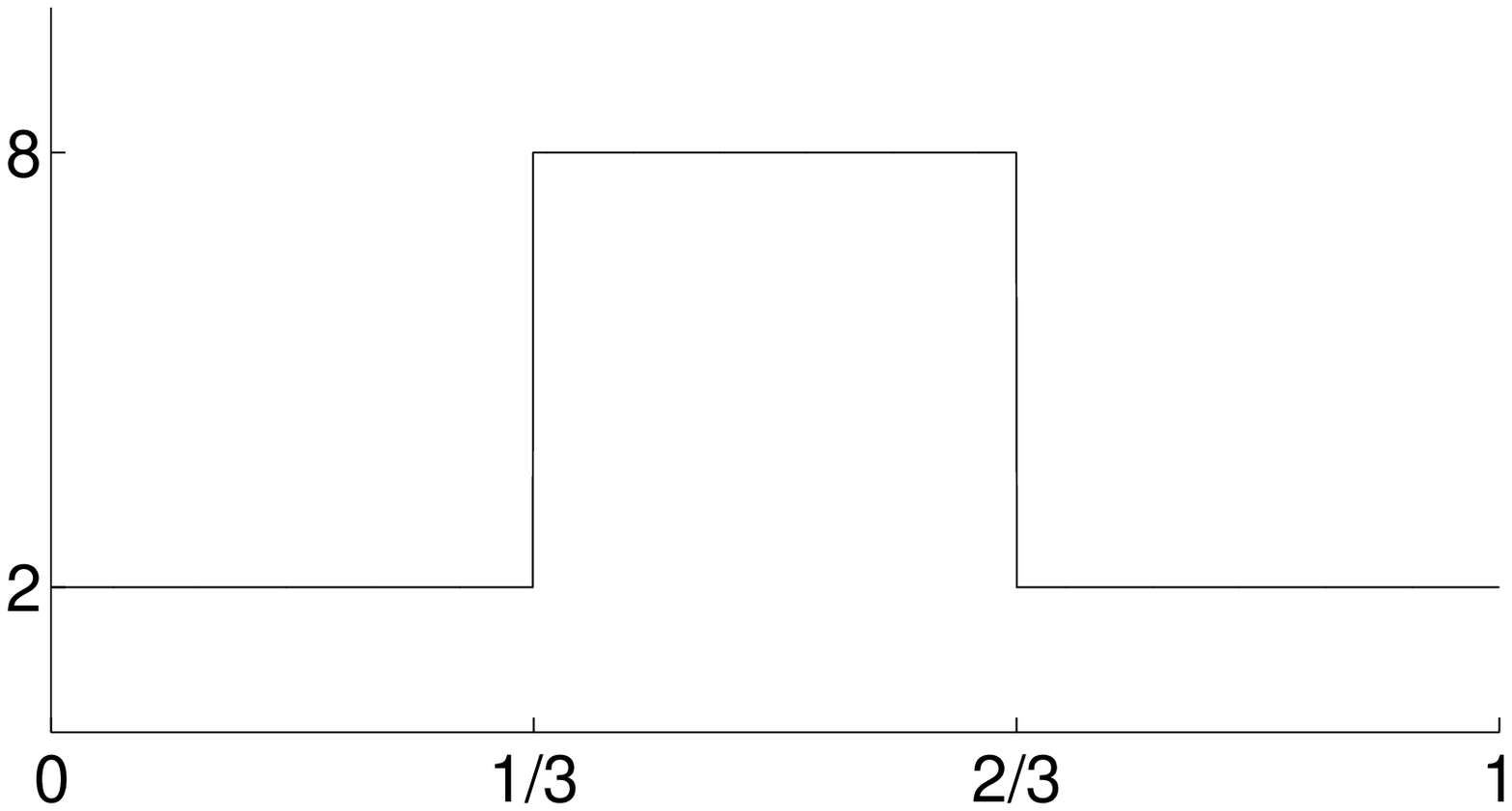}}
\put(230,0){\epsffile{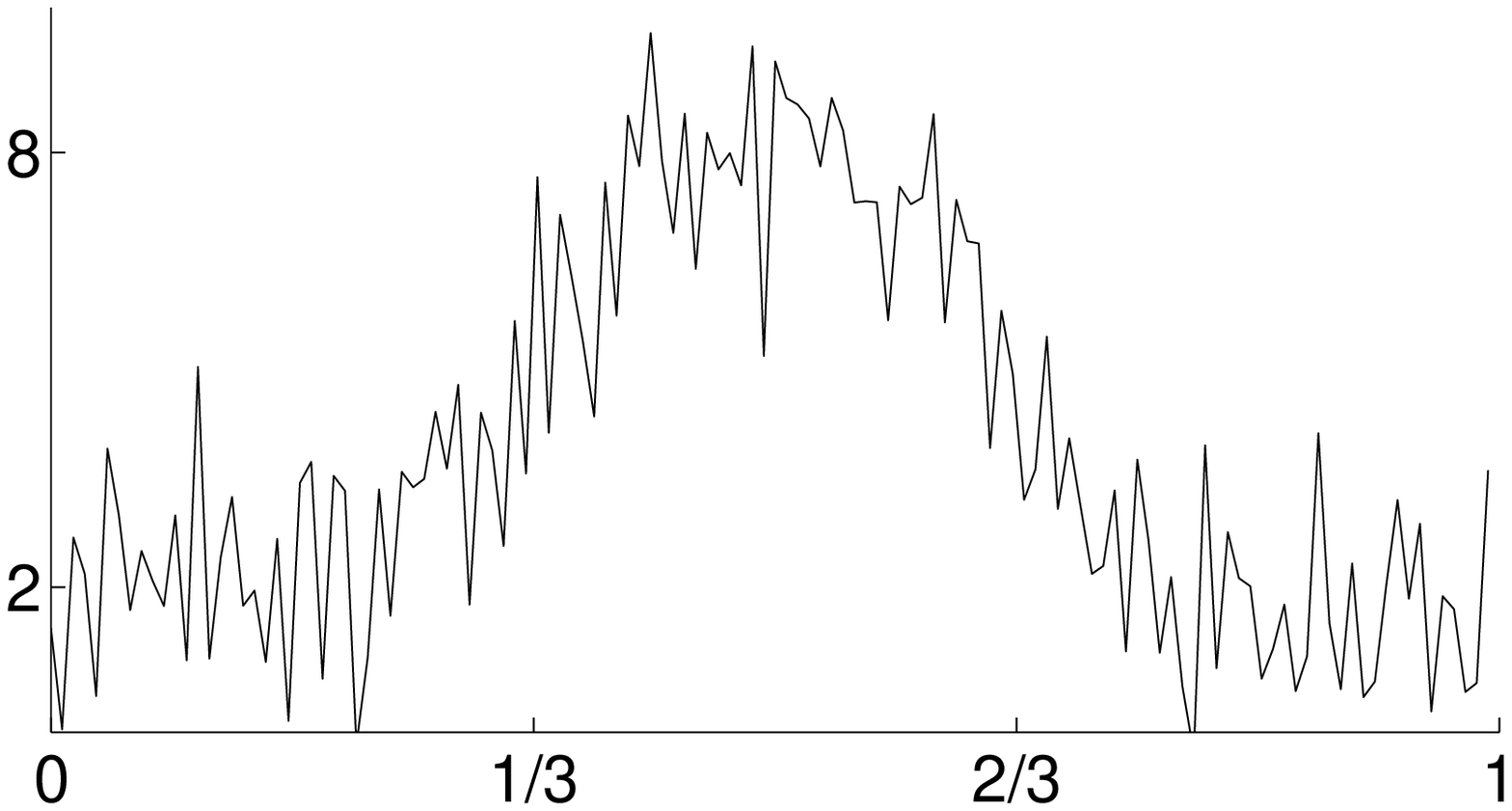}}
\end{picture}
\caption{Left: exact solution $u$, Right: measurement $m_k=M_k(\omega_0)$.}
\label{fig_exact}
\end{figure}

\begin{figure}[h]
\begin{picture}(400,180)(40,10)

\epsfxsize=3.8cm 
\epsfysize=2.5cm
\put(40,90){\epsffile{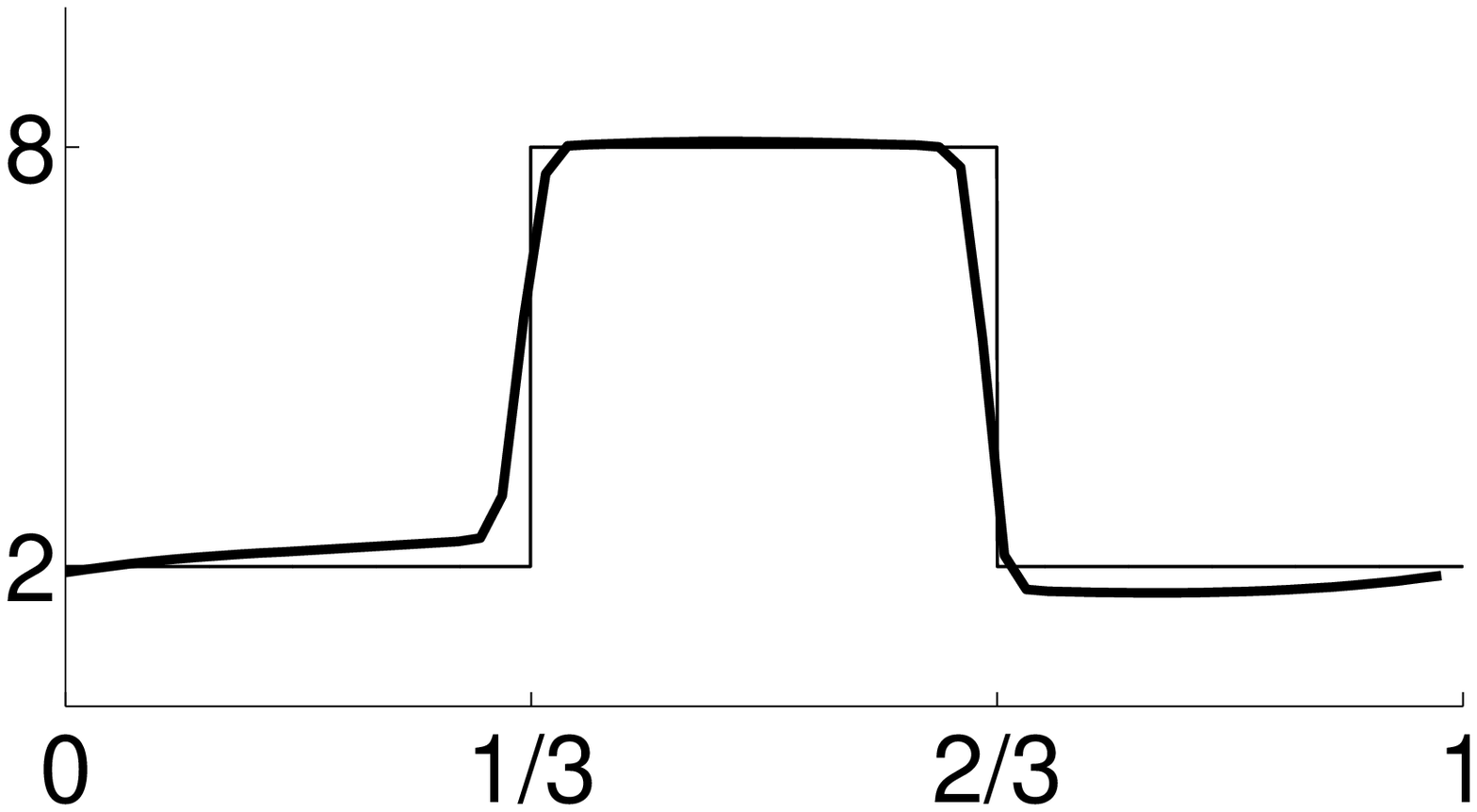}}
\put(40,165){$N=64$}
\put(165,90){\epsffile{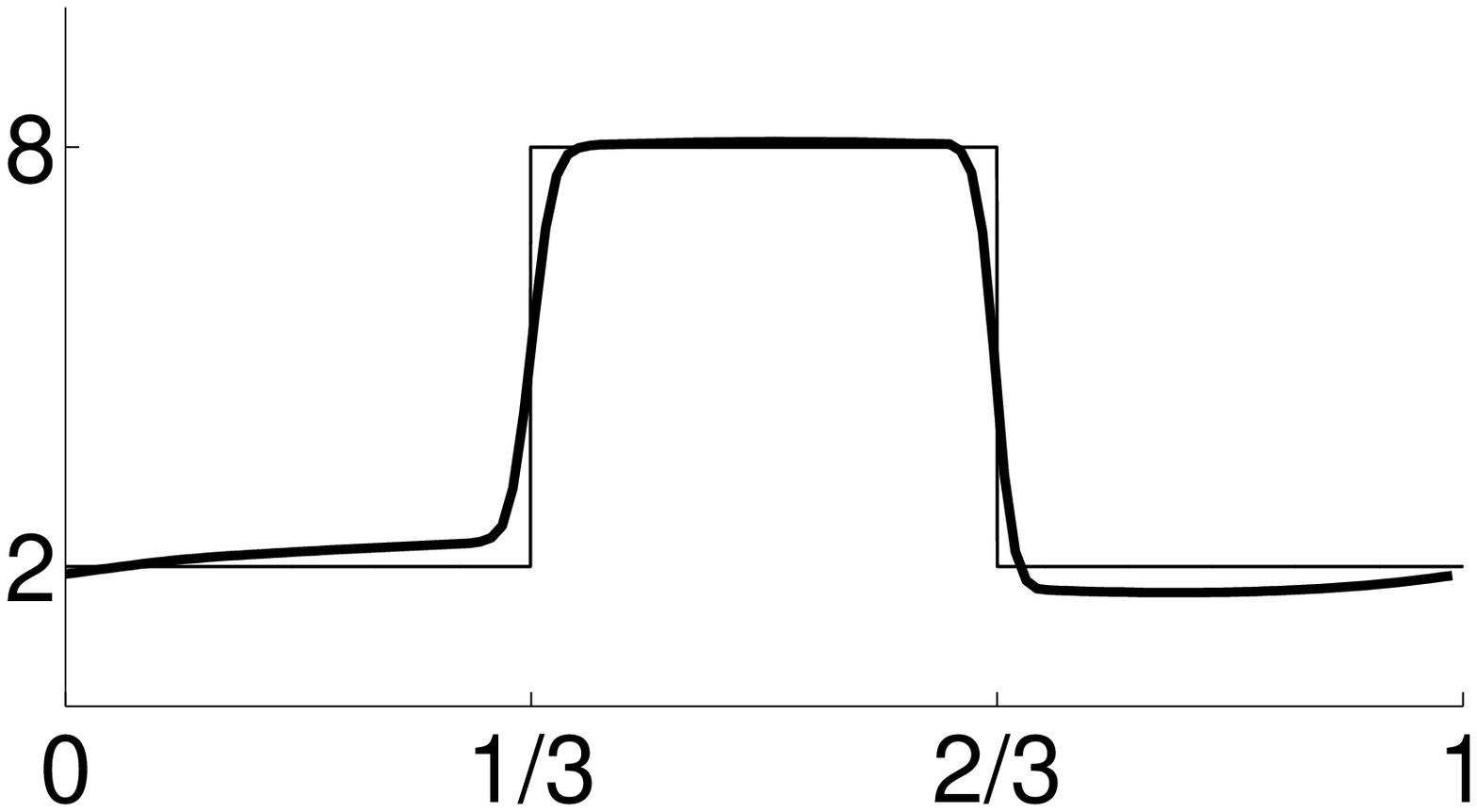}}
\put(165,165){$N=128$}
\put(290,90){\epsffile{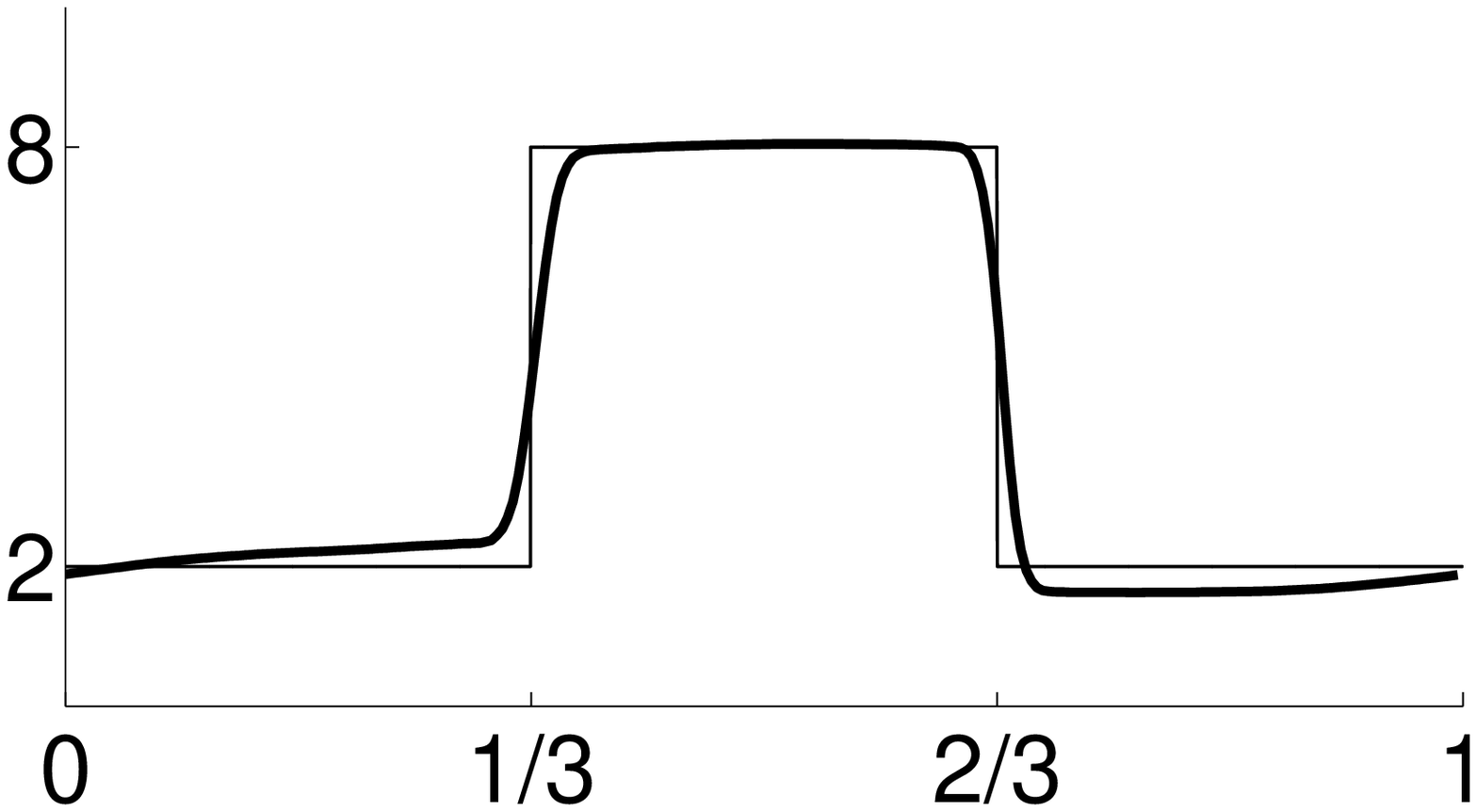}} 
\put(290,165){$N=256$}

\put(40,10){\epsffile{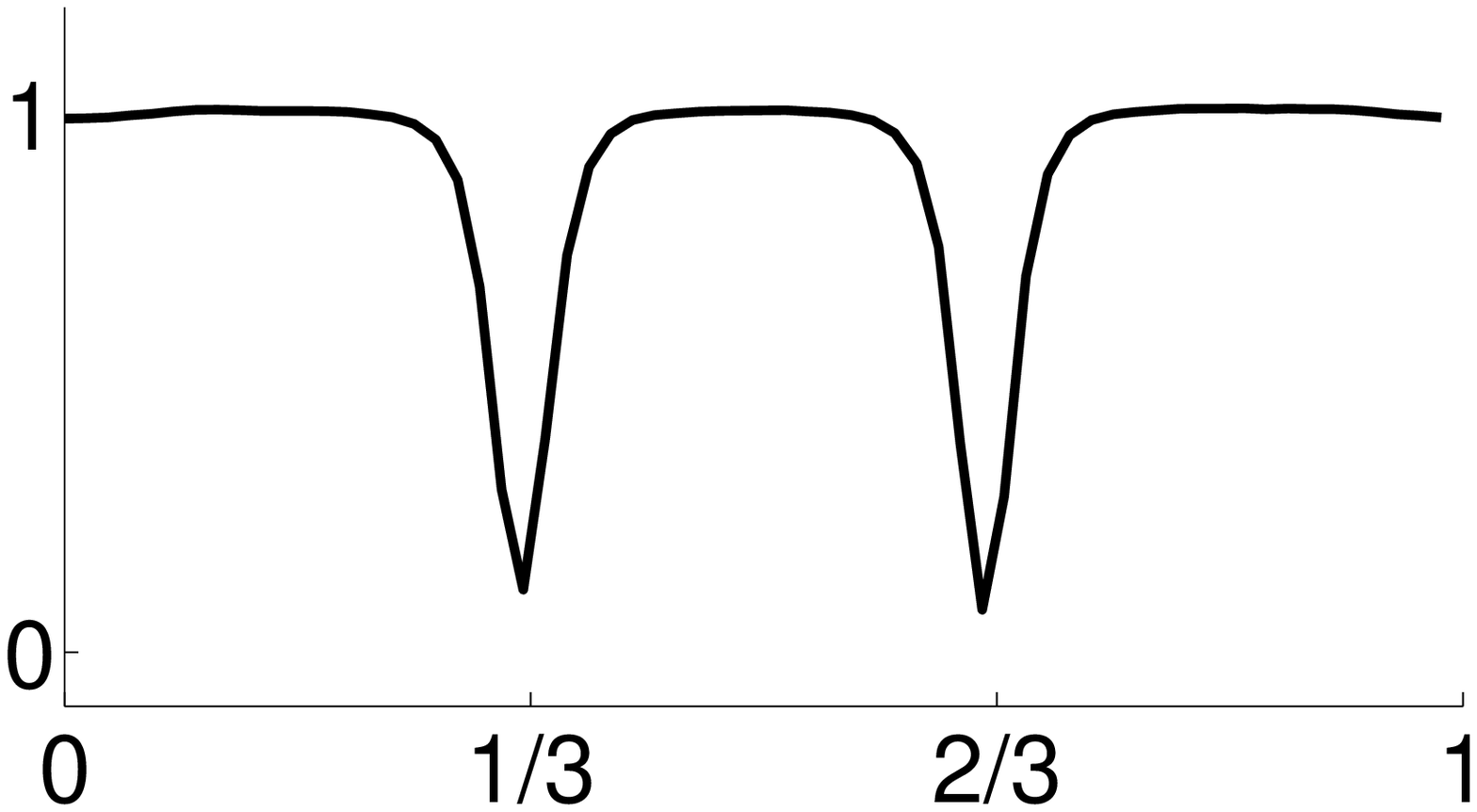}} 
\put(165,10){\epsffile{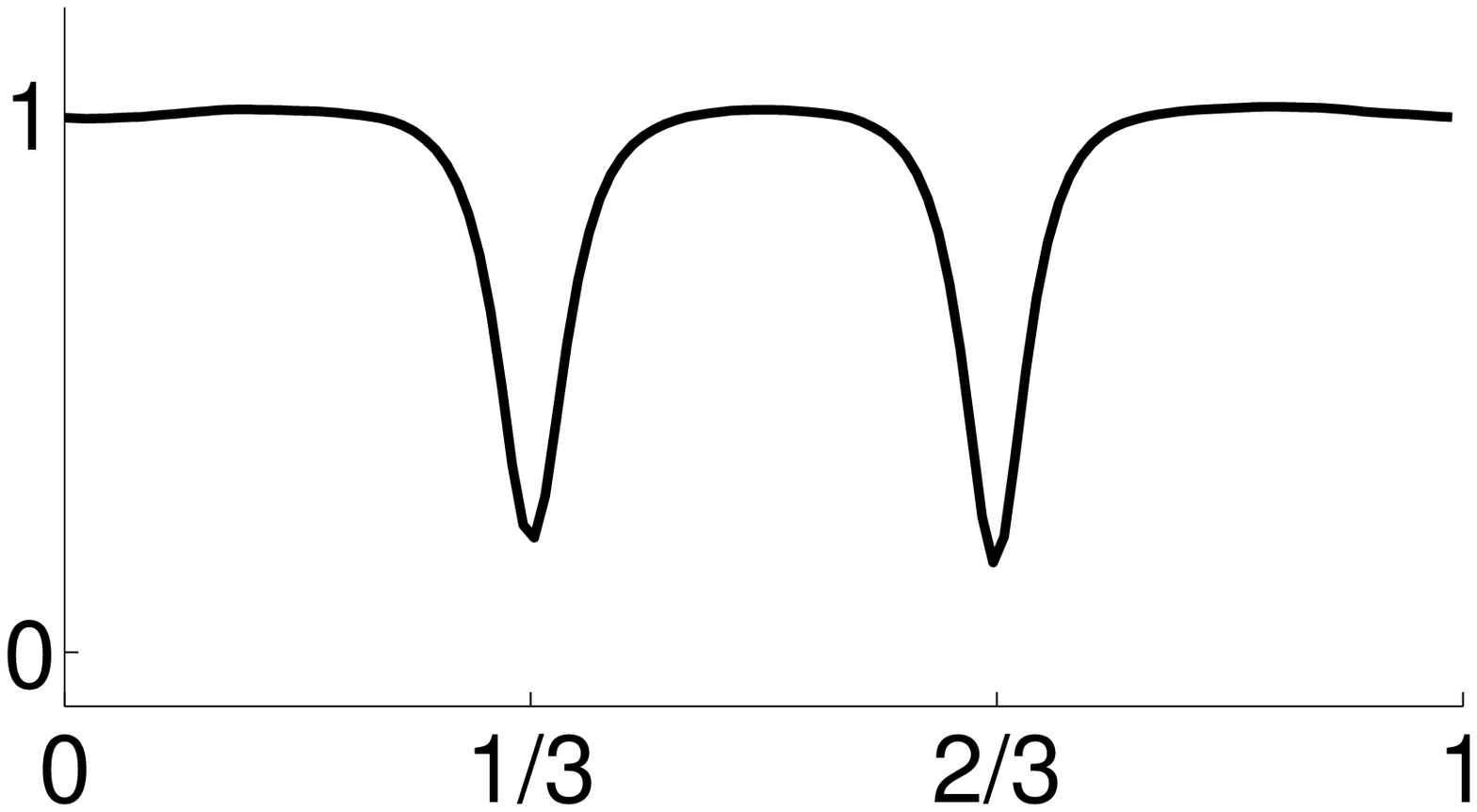}}
\put(290,10){\epsffile{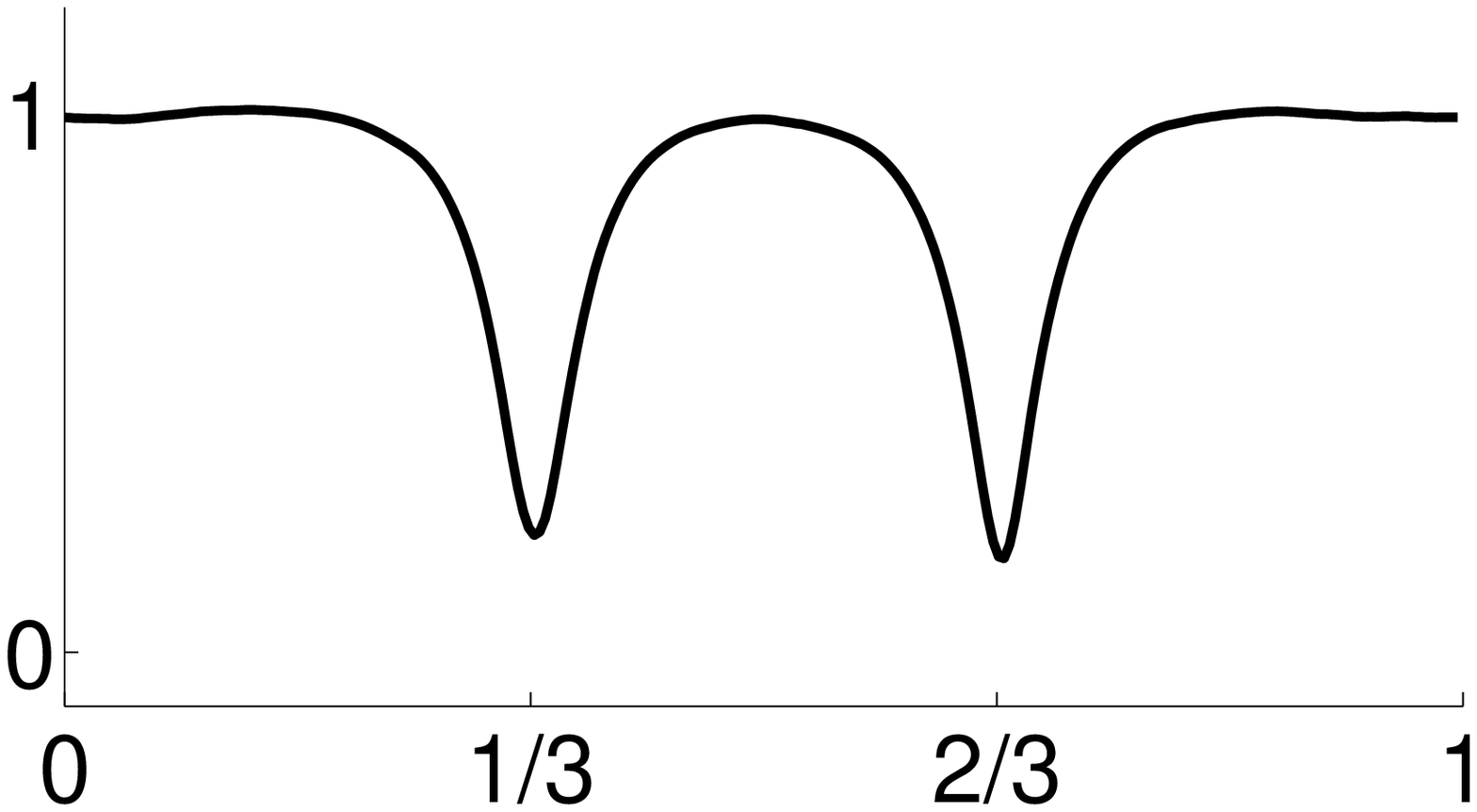}} 

\end{picture}
\caption{All the plots in this figure are obtained with the choice $\epsilon = 10^{-3}$ and $k=7$. 
Top row: the CM estimates $u^{CM}_{kn}$ with $n=6,7,8$ (thick line) and the true signal (thin line)
Bottom row: the CM estimates $v^{CM}_{kn}$.}
\label{fig2}
\end{figure}

\begin{figure}[h]
\begin{picture}(400,180)(40,10)

\epsfxsize=3.8cm 
\epsfysize=2.5cm
\put(40,90){\epsffile{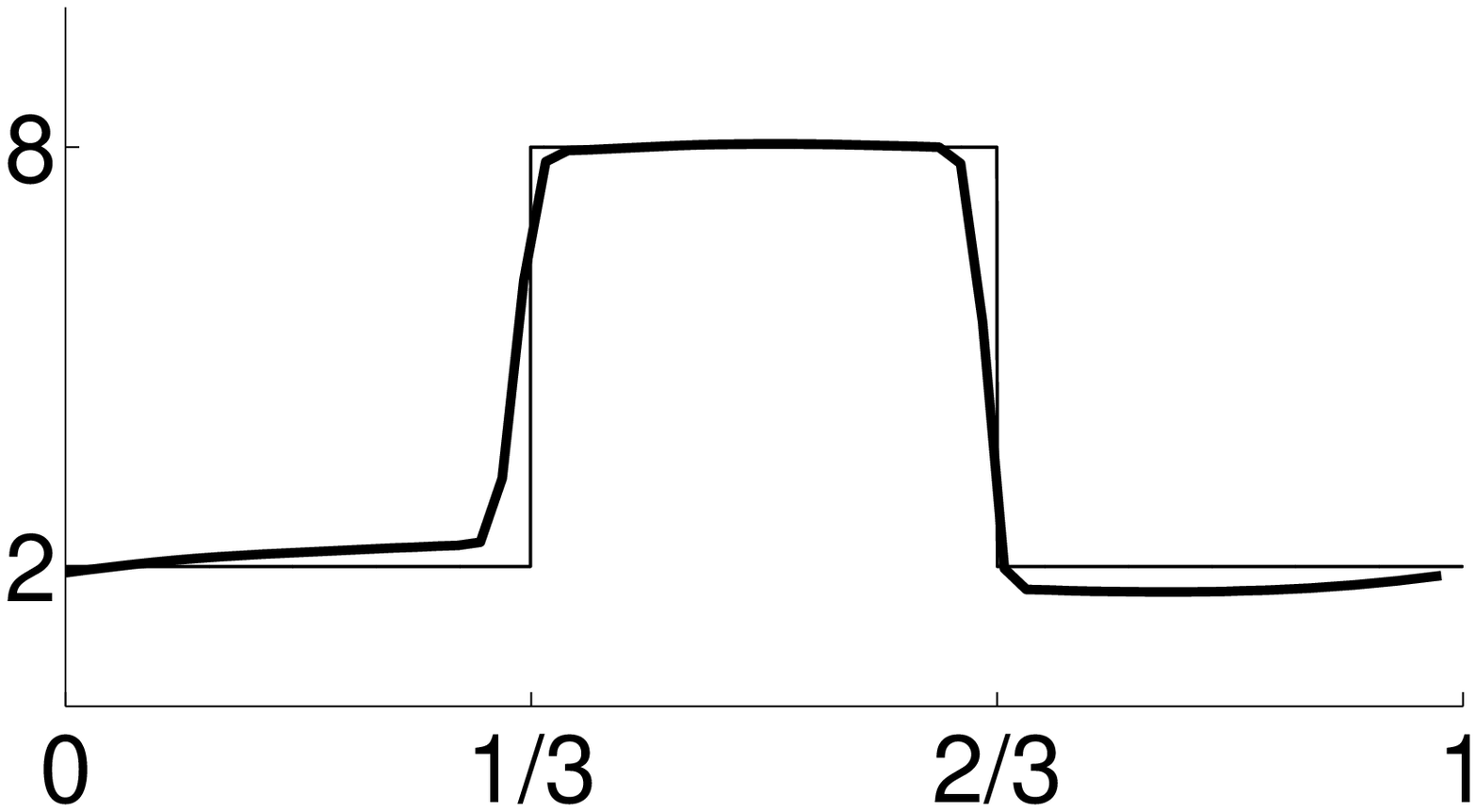}}
\put(40,165){$N=64$}
\put(165,90){\epsffile{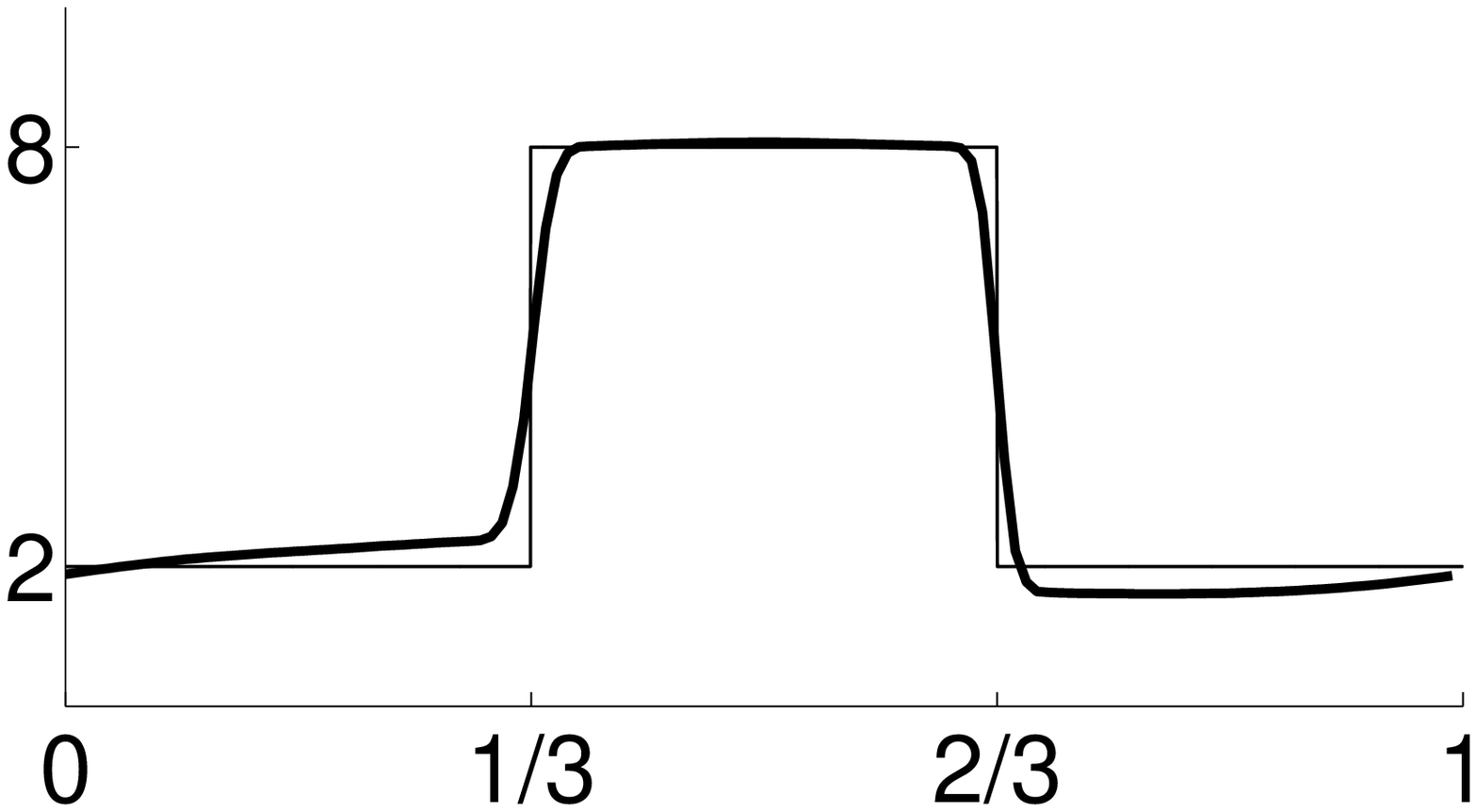}}
\put(165,165){$N=128$}
\put(290,90){\epsffile{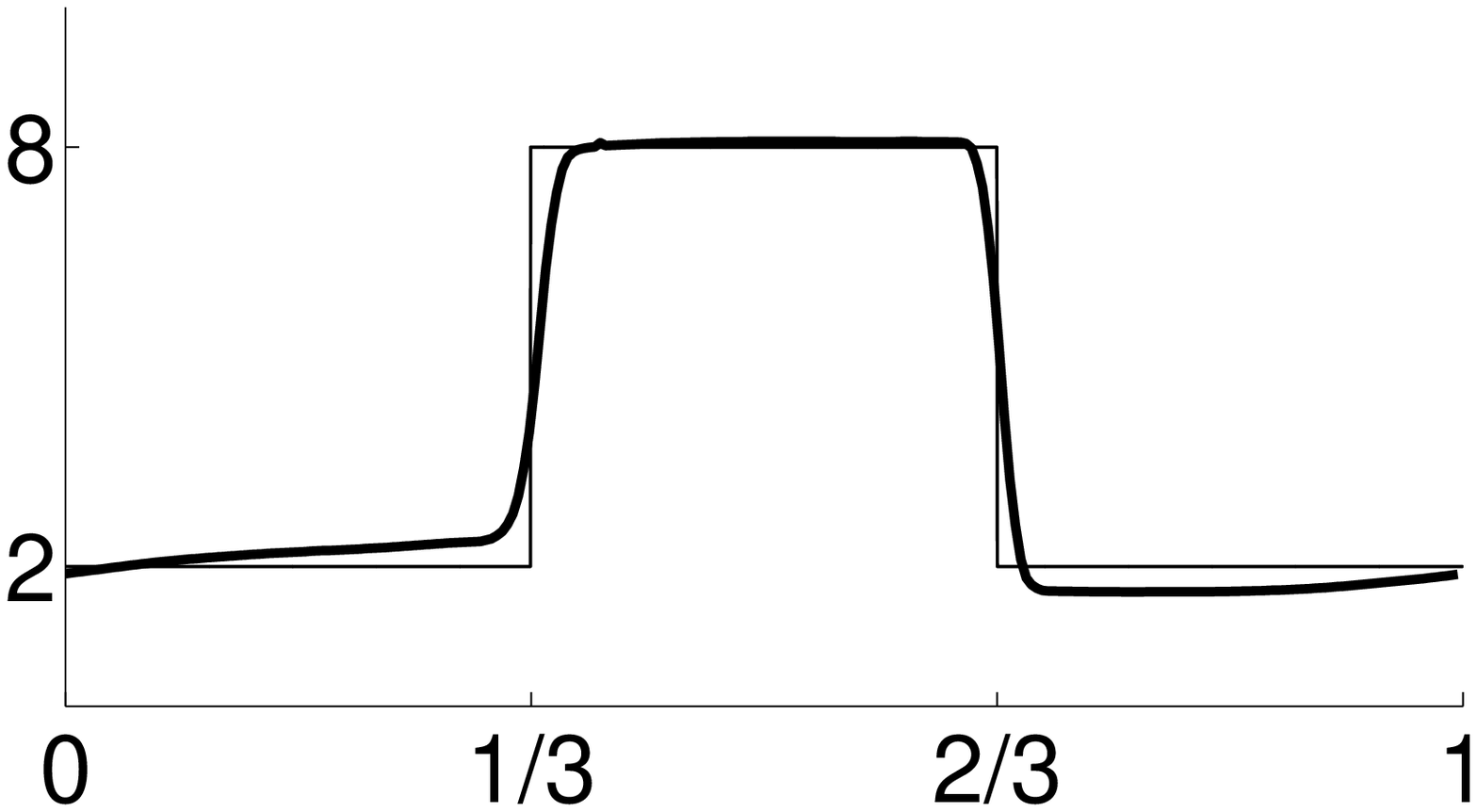}} 
\put(290,165){$N=256$}

\put(40,10){\epsffile{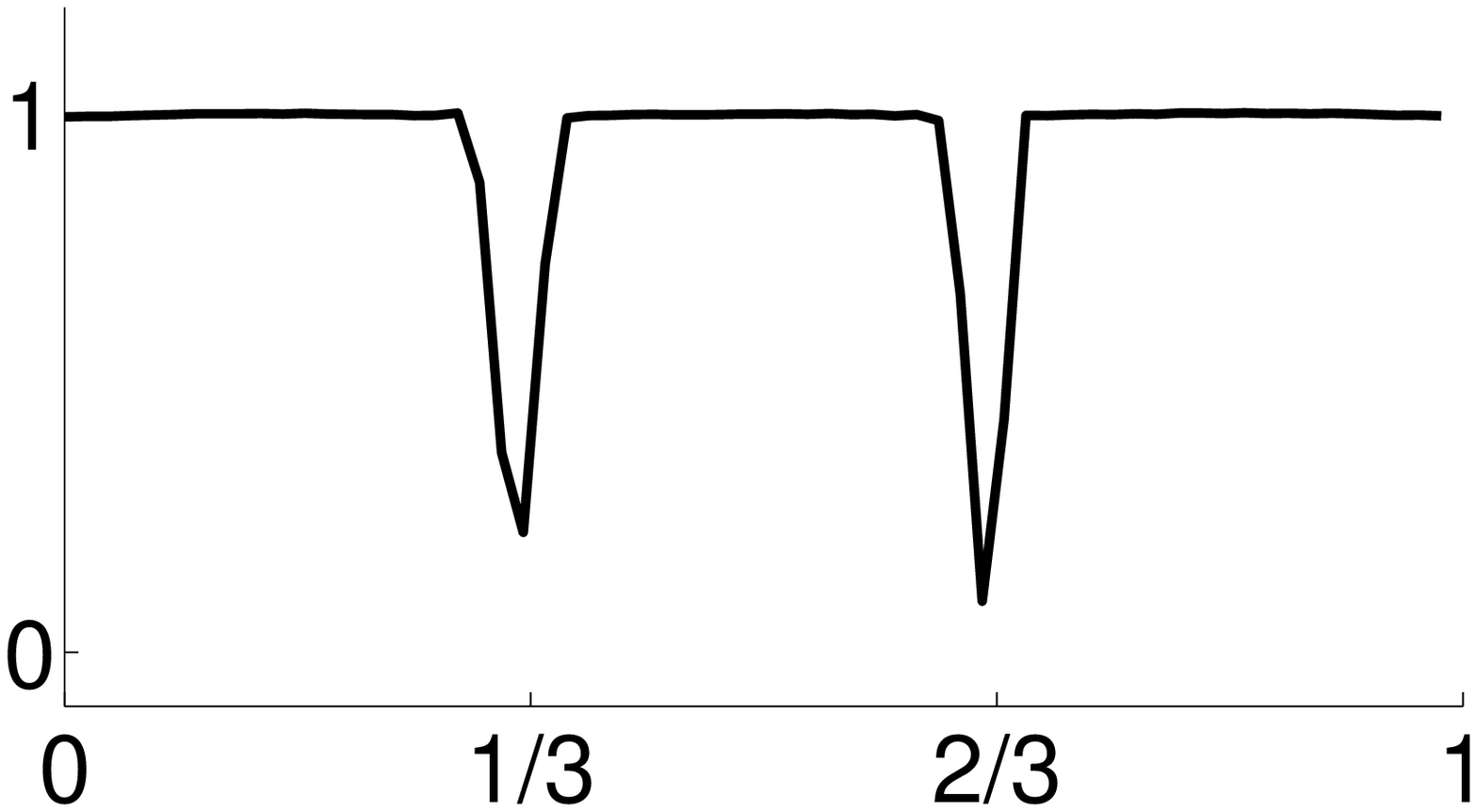}} 
\put(165,10){\epsffile{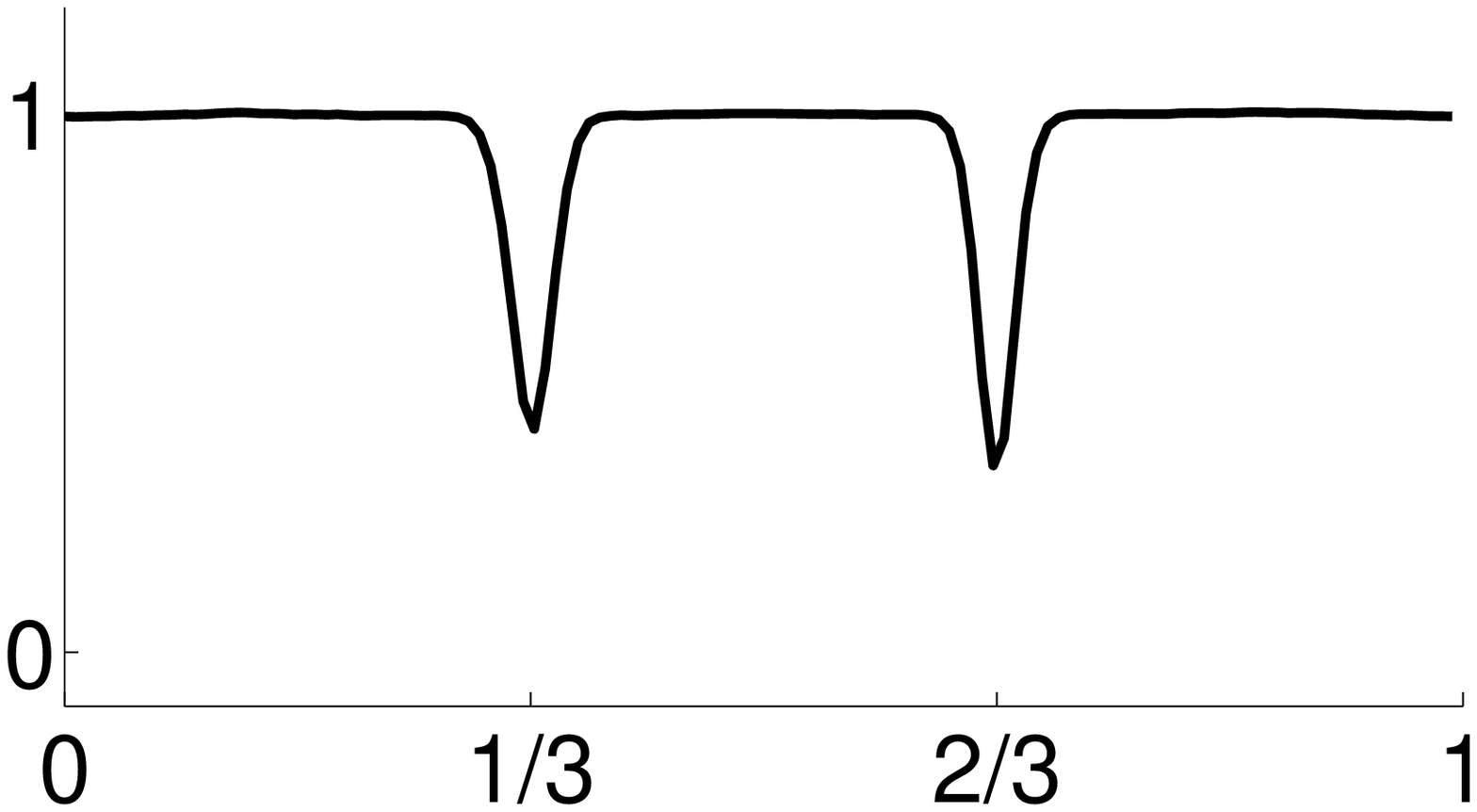}}
\put(290,10){\epsffile{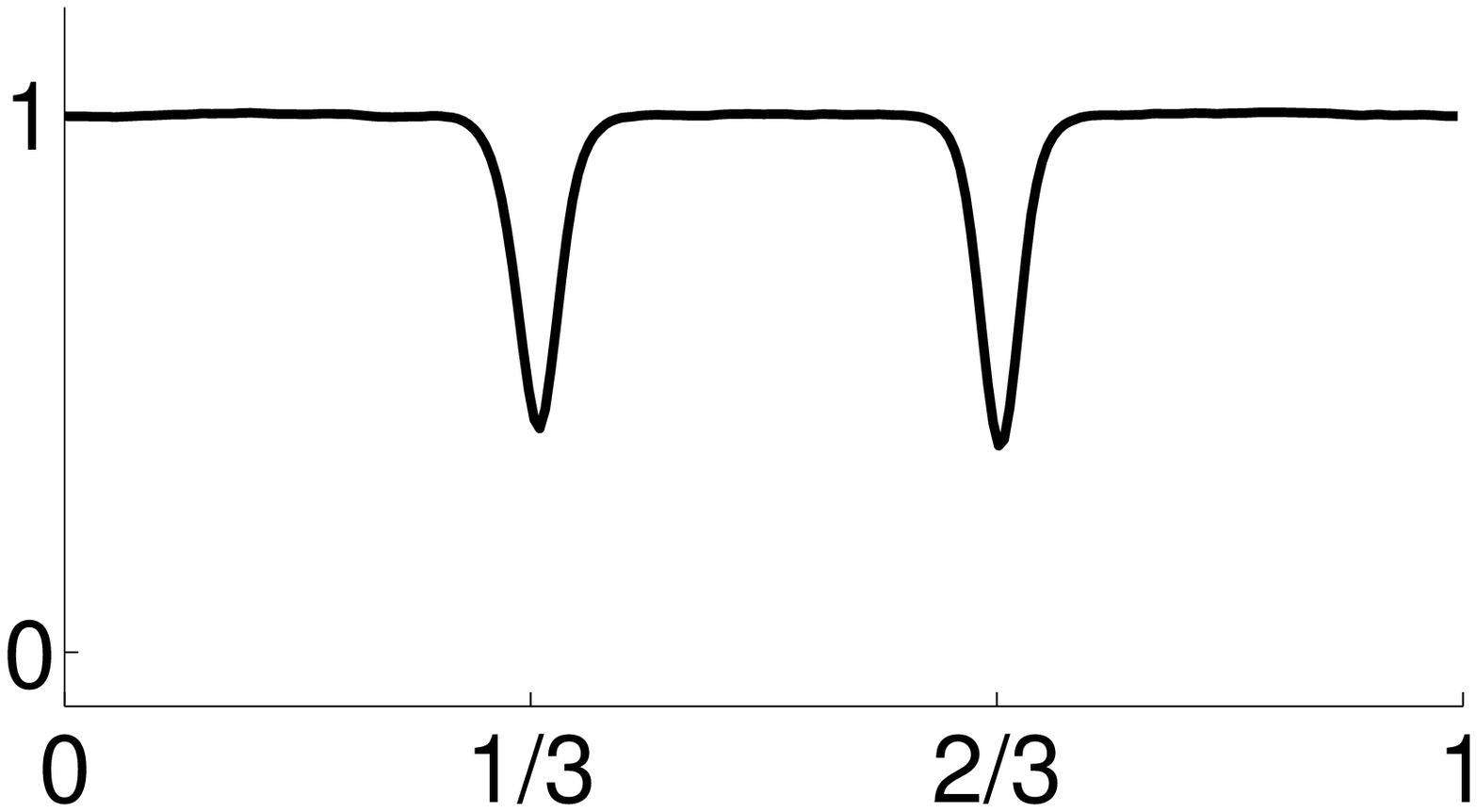}} 

\end{picture}
\caption{All the plots in this figure are obtained with the choice $\epsilon = 3\cdot 10^{-4}$ and $k=7$. 
Top row: the CM estimates $u^{CM}_{kn}$ with $n=6,7,8$ (thick line) and the true signal (thin line)
Bottom row: the CM estimates $v^{CM}_{kn}$.}
\label{fig3}
\end{figure}

\subsection{Discussion}

We have computed the CM estimates in relatively low dimensions (highest being $N=256$). This is due to the
long computational times of MCMC algorithms. The computational times can be improved with more sophisticated algorithm design,
e.g., parallelization. Furthermore, we expect MCMC methods to become much feasible in the future due to evolution
of computers.

It is evident from Figures \ref{fig2} and \ref{fig3} that 
the sharpness of edges in the CM estimates can be controlled via $\epsilon$ and
the CM estimates $u^{CM}_{kn}$ seem stable with respect to $n$.
The results concerning $u^{CM}_{kn}$ fit well to our
expectations of the true CM estimate being a slightly smoothened approximation of the real signal
represented in Figure \ref{fig_exact}.
Considering the relatively large noise in the measurement we conclude that
the method estimates the true signal $u$ well.
However, one can notice changes in functions $v^{CM}_{kn}$.
First of all, given larger value of $N$ the functions $v^{CM}_{kn}$ become smoother.
This phenomena is less visible with smaller values of $\ep$ but note that we have not proved
what the limiting estimates are exactly.
The author expects this phenomena to stabilize with higher values of $N$ but it should be checked in the future studies.
Second, given smaller value of $\ep$ the maximum of $|v^{CM}_{kn}-1|$ becomes smaller.
Although the asymptotic analysis of taking $\ep$ to zero was not considered in this paper
we expect that some coupling of $N$ and $\ep$ need to be made for algorithm to work properly asymptotically
with respect to $\ep$.
In the deterministic minimization problems of 
discrete Ambrosio--Tortorelli functionals one typically needs to 
assume that $N(\ep) \ep^2 \to \infty$ when $\ep$ goes to zero (see e.g. \cite{BeCo})

We conclude this discussion by pointing out
that we have not used any ad-hoc weighting of the prior or likelihood information. 
This additional flexibility of the algorithm can be achieved by scaling the covariances of $U$ or $V$ with a constant. \\

{\bf Acknowledgements:} This work was supported by Emil Aaltonen 
foundation, Graduate School of Inverse problems (Academy of Finland)
and Finnish Centre of Excellence in Inverse Problems Research (Academy of Finland).
The author would like to thank Petteri Piiroinen, Hanna Pikkarainen and Samuli Siltanen for
various useful discussions and is grateful to the anonymous referees for careful examination
of the manuscript.

\addcontentsline{toc}{section}{Bibliography}

\bibliographystyle{amsalpha}

\begin{thebibliography}{A}

\bibitem{Adams}
R.~Adams, ''Sobolev spaces''. Academic Press, New York, 1975.

\bibitem{AT1}
L.~Ambrosio and V.M.~Tortorelli, {\em Approximation of functionals depending on jumps by elliptic
functional via $\Gamma$-convergence}, Comm. Pure Appl. Math., {\bf 43} (1990), 999--1036.

\bibitem{AT2}
L.~Ambrosio and V.M.~Tortorelli, {\em On the approximation of free discontinuity problem}.
Boll. Un. Mat. Ital. B, {\bf 6} (1992), 105--123.

\bibitem{BeCo}
G.~Bellettini and A.~Coscia, {\em Discrete approximation of a free discontinuity problem}.
Numer. Funct. Anal. and Optimiz., {\bf 15} (1994), 201--224.

\bibitem{BL}
J.~Bergh and J.~L\"ofstr\"om, ``Interpolation spaces. An introduction.'', 
Grundlehren der Mathematischen Wissenschaften {\bf 223}, Springer--Verlag, Berlin-New York, 1976.

\bibitem{BG}
R.M.~Blumenthal and R.K.~Getoor,
"Markov processes and potential theory", 
Pure and Applied Mathematics {\bf 29}, Academic Press, New York-London, 1968.

\bibitem{B}
V.~Bogachev, ``Gaussian measures'', Mathematical Surveys and Monographs {\bf 62}, American
Mathematical Society, Providence, RI, 1998.

\bibitem{BoSa}
C.~Bouman and K.~Sauer,
{\em A generalized Gaussian image model for edge-preserving MAP estimation},
IEEE Trans. Image Process. {\bf 2} (1996), 296--310.

\bibitem{CS2}
D.~Calvetti and E.~Somersalo, {\em Gaussian hypermodels and recovery of blocky objects}, Inverse Problems, {\bf 23} (2007), 733--754.

\bibitem{CS}
D.~Calvetti and E.~Somersalo, {\em Hypermodels in the Bayesian imaging framework}, 
Inverse Problems, {\bf 24} (2008), 034013.
 
\bibitem{CS3}
D.~Calvetti and E.~Somersalo, ``An introduction to Bayesian scientific computing - Ten lectures on subjective computing'',
Springer, New York, 2007.

\bibitem{CGL}
G.K.~Chantas, N.P.~Galatsanos and A.C.~Likas,
{\em Bayesian Restoration Using a New Nonstationary Edge-Preserving Image Prior},
IEEE Trans. Image Process. {\bf 15} (2006), 2987--2997.

\bibitem{DaPrato}
G.~Da Prato and J.~Zabczyk, ``Stochastic Equations in Infinite Dimensions'',
Cambridge University Press, Cambridge, 1992.

\bibitem{DU}
\newblock J.~Diestel and J.~J.~Uhl, Jr.,
``Vector measures,''  Mathematical Surveys {\bf 15}, American Mathematical Society, Providence, 1977.



\bibitem{Fi}
B.~G.~Fitzpatrick,
\newblock {\em Bayesian analysis in inverse problems},
\newblock Inverse Problems, {\bf 7} (1991), 675--702.

\bibitem{Fr}
J.N.~Franklin, {\em Well-posed stochastic extensions of ill-posed linear problems}, 
J. Math. Anal. Appl., {\bf 31} (1970), 682--716.

\bibitem{Fu}
M.~Fukushima, Y.~Oshima and M.~Takeda,
"Dirichlet forms and symmetric Markov processes",
Walter de Gruyter Co., Berlin, 1994.

\bibitem{Gelman}
A.G.~Gelman, G.O.~Roberts and W.R.~Gilks,
{\em Strategies for improving MCMC}, in Bayesian Statistics V, J.M.~Bernardo, J.O.~Berger, A.F.~David and A.F.M.~Smith, eds,
Oxford Univ. Press, New York, (1996), 599--608.

\bibitem{GG}
S.~Geman and D.~Geman,
{\em Stochastic Relaxation, Gibbs Distributions, and the Bayesian Restoration of Images},
IEEE Trans. Pattern Anal. Mach. Intell, {\em 6} (1984), 721--741.

\bibitem{GS2}
I.I.~Gihman and A.V.~Skorohod, "The theory of stochastic processes II", Springer-Verlag,
New York-Heidelberg, 1975.


\bibitem{lehtinen2} 
H.~Haario, M.~Laine, M.~Lehtinen, S.~Saksman and J.~Tamminen,
{\em MCMC methods for high dimensional inversion in remote sensing}, { J. Roy. Statist. Soc. Ser. B.}, {\bf 66} (2004), 591--607.

\bibitem{HST1}
H.~Haario, E.~Saksman and J.~Tamminen, {\em An adaptive Metropolis algorithm}, Bernoulli, {\bf 7} (2001), 223--242.

\bibitem{HST}
H.~Haario, E.~Saksman and J.~Tamminen, {\em Componentwise adaption for high dimensional MCMC},
Comput. Statist., {\bf 20} (2005), 265--273.

\bibitem{Has}
W.~Hastings, {\em Monte Carlo sampling methods using Markov chains and their applications},
Biometrika, {\bf 57} (1970), 97--109.

\bibitem{HeLa}
T.~Helin and M.~Lassas, {\em Bayesian signal restoration and Mumford--Shah functional},
Proc. Appl. Math. Mech., {\bf 7} (2008), 2080013--2080014. 

\bibitem{HP}
A.~Hofinger and H.K.~Pikkarainen, 
{\em Convergence rates for linear inverse problems in the presence of an additive normal noise}, 
Stoch. Anal. Appl., {\bf 27}(2) (2009), 240--257. 

\bibitem{HP2}
A.~Hofinger and H.K.~Pikkarainen,  
{\em Convergence rates for the Bayesian approach to linear inverse problems}, 
Inverse Problems, {\bf 23} (2007), 2469--2484. 

\bibitem{kaipiosomersalo2}
J.~Kaipio and E.~Somersalo, ``Statistical and computational inverse problems'',
Applied Mathematical Sciences {\bf 160}, Springer-Verlag, New York, 2005.

\bibitem{kaipiosomersalo}
J.~Kaipio and E.~Somersalo, 
{\em Statistical inverse problems: discretization, model reduction and inverse crimes}. 
J. Comp. Appl. Math. {\bf 198} (2006), 493--504.

\bibitem{kallenberg}
O.~Kallenberg, ``Foundations of Modern Probability'',
Probability and Its Applications (New York), 2nd ed., Springer-Verlag, New York, 2002.

\bibitem{Kato}
T.~Kato, ``Perturbation theory for linear operators'', Grundlehren der Mathematischen Wissenschaften {\bf 132}, 
Springer-Verlag, Berlin-New York, 1976.

\bibitem{La} 
S.~Lasanen,
{\em Discretizations of generalized random variables with applications to inverse problems},
Ann. Acad. Sci. Fenn. Math. Diss. {\bf 130}, (2002).

\bibitem{LS} 
M.~Lassas and S.~Siltanen,
{\em Can one use total variation prior for edge preserving Bayesian inversion?,}
 Inverse Problems {\bf 20} (2004), 1537--1564.

\bibitem{LSS}
M.~Lassas, E.~Saksman and S. Siltanen, {\em Discretization invariant Bayesian inversion and Besov space priors}, 
Inverse Probl. Imaging, {\bf 3} (2009), 87--122.

\bibitem{LPS}
M.~Lehtinen, L.~P\"aiv\"arinta, and E.~Somersalo, {\em Linear inverse problems for generalised random variables}, Inverse Problems, {\bf 5} (1989), 599--612.

\bibitem{Lu}
H.~Luschgy, {\em Linear estimators and radonifying operators}, Teor. Veroyatnost. i Primenen 40, translation in Theory Probab. Appl., {\bf 40} (1995),  167--175.

\bibitem{Ma}
A.~Mandelbaum, {\em  Linear Estimators and Measurable Linear Transformations on a Hilbert Space}, 
Z. Wahrsch. Verw. Gebiete, {\bf 65} (1984), 385--397.

\bibitem{Met}
N.~Metropolis, A.W.~Rosenbluth, M.N.~Rosenbluth, A.H.~Teller and E.~Teller,
{\em Equations of state calculations by fast computing machine},
J. Chem. Phys., {\bf 21} (1953), 1087--1091.

\bibitem{mumfordshah}
D.~Mumford and J.~Shah, 
{\em Optimal Approximations by Piecewise Smooth Functions and Associated 
Variational Problems}, Commun. Pure Appl. Math., {\bf 42} (1989), 577--685.

\bibitem{NP}
A.~Neubauer and H.K.~Pikkarainen, 
{\em Convergence results for the Bayesian inversion theory}, 
J. Inverse Ill-Posed Probl., {\bf 16} 2008, 601--613.

\bibitem{Pi}   P.~Piiroinen,
{\em Statistical Measurements, Experiments and Applications,}
Ann. Acad. Sci. Fenn. Math. Diss., {\bf 143} (2005).

\bibitem{Ash}
A.~Raj, G.~Singh, R.~Zabih, B.~Kressler, Y.~Wang, N.~Schuff and M.~Weiner,
{\em Bayesian parallel imaging with edge-preserving priors},
Magn. Reson. Med., {\bf 57} (2006), 8--21.


\bibitem{ROF}
L.I.~Rudin, S.~Osher and E.~Fatemi, {\em Nonlinear total variation based noise removal algorithms},
Physica D, {\em 60} (1992), 259--268.

\bibitem{SV}
E.~Saksman and M.~Vihola, {\em On the ergodicity of the adaptive Metropolis algorithm on unbounded domains},
arXiv:0806.2933, 2008.


\bibitem{Se}
J.A.~Sethian, ``Level Set Methods and Fast Marching Methods'',
Cambridge University Press, Cambridge, 1999.

 \bibitem{T3}
M.E.~Taylor, ``Partial differential equations III: Nonlinear equations'' 
Applied Mathematical Sciences {\bf 117}, Springer-Verlag, New York, 1997.

\bibitem{Tr}
H.~Triebel, ``Interpolation theory, function spaces, differential operators'', 
North-Holland Mathematical Library {\bf 18}, 
North-Holland Publishing Co., Amsterdam-New York, 1978. 

\bibitem{Vak2}
N.~N.~Vakhania,
{\em Tr\"ager des Gausschen Masses im Hilbertraum.}
Math. Nachr. {\em 64} (1974), 319--322.

\bibitem{VTI}
 N.~N.~Vakhania,  V.~I.~Tarieladze and S.~A.~Chobanyan,  ``Probability distributions on Banach spaces,''
 Mathematics and its Applications (Soviet Series) {\bf 14},
D. Reidel Publishing Co., Dordrecht, 1987.


\end{thebibliography}

\medskip

{\it E-mail address: }Tapio.Helin@tkk.fi

\end{document}